\newtheorem{thm}{Theorem}[section]
\newtheorem{lem}[thm]{Lemma}
\newtheorem{prop}[thm]{Proposition}
\newtheorem{cor}[thm]{Corollary}
\theoremstyle{definition}
\newtheorem{dfn}[thm]{Definition}
\newtheorem{rmk}[thm]{Remark}
\theoremstyle{remark}
\newtheorem*{ac}{Acknowledgments}
\numberwithin{equation}{thm}
\def\Cok{\operatorname{Coker}}
\def\Cokv{\operatorname{\underline{Coker}}}
\def\dell{\operatorname{dell}}
\def\depth{\operatorname{depth}}
\def\Ext{\operatorname{Ext}}
\def\ge{\geqslant}
\def\grade{\operatorname{grade}}
\def\H{\mathrm{H}}
\def\Hom{\operatorname{Hom}}
\def\image{\operatorname{Im}}
\def\Ker{\operatorname{Ker}}
\def\le{\leqslant}
\def\lhom{\operatorname{\underline{Hom}}}
\def\lmod{\operatorname{\underline{mod}}}
\def\m{\mathfrak{m}}
\def\mod{\operatorname{mod}}
\def\p{\mathfrak{p}}
\def\pd{\operatorname{pd}}
\def\proj{\operatorname{proj}}
\def\syz{\Omega}
\def\tr{\operatorname{Tr}}
\def\V{\mathrm{V}}
\def\X{\mathcal{X}}
\begin{document}
\allowdisplaybreaks
\title[An approach to Martsinkovsky\textquotesingle s invariant via Auslander\textquotesingle s approximation theory]{An approach to Martsinkovsky\textquotesingle s invariant\\ via Auslander\textquotesingle s approximation theory}
\author{Yuya Otake}
\address{Graduate School of Mathematics, Nagoya University, Furocho, Chikusaku, Nagoya 464-8602, Japan}
\email{m21012v@math.nagoya-u.ac.jp}

\thanks{2020 {\em Mathematics Subject Classification.} 13D02, 16D90, 13D07.}
\thanks{{\em Key words and phrases.} $\xi$-invariant, $\delta$-invariant, approximation theory, grade, stable category, syzygy.}
\thanks{The author was partly supported by Grant-in-Aid for JSPS Fellows 23KJ1119.}
\begin{abstract}
Auslander developed a theory of the $\delta$-invariant for finitely generated modules over commutative Gorenstein local rings, and Martsinkovsky extended this theory to the $\xi$-invariant for finitely generated modules over general commutative noetherian local rings.
In this paper, we approach Martsinkovsky’s $\xi$-invariant by considering a non-decreasing sequence of integers that converges to it.
We investigate Auslander’s approximation theory and provide methods for computing this non-decreasing sequence using the approximation.
\end{abstract}
\maketitle
\tableofcontents
\section{Introduction}
The theory of maximal Cohen--Macaulay  (abbreviated to MCM) approximations was established by Auslander and Buchweitz \cite{ABu}, and has played an important role in Cohen--Macaulay representation theory.  
Moreover, Auslander showed that every finitely generated module $M$ over a commutative Gorenstein local ring admits a unique minimal MCM approximation in his unpublished paper \cite{AusM}, and defined the {\em $\delta$-invariant} of $M$, denoted $\delta(M)$, using the approximation.
Numerous interesting properties and applications of the delta invariant have been discovered; see, for example, \cite{ADS, ABIM, Ding92, Ding93, Ding94, Ding942, HS, Kfree, Puthen, Shida, Yoshida, Yos96, Yos99},  
and also the detailed expositions in \cite{LW, Yos93}.
Dual to the existence of MCM approximations, every finitely generated module $M$ over a Gorenstein local ring admits a finite projective dimension (abbreviated to FPD) hull $0 \to M \to Y \to X \to 0$.
Auslander \cite{AusM} showed that the delta invariant can be computed via an FPD hull, and as a consequence, he found striking properties of modules with null $\delta$-invariant.
Here, we denote by $\mu(M)$ the minimal number of generators of an $R$-module $M$.

\begin{thm}[Auslander]\label{IntroAus}
Let $R$ be a commutative Gorenstein local ring with residue field $k$, and let $M$ be a finitely generated $R$-module.
\begin{enumerate}[\rm(1)]
   \item
   For any FPD hull $0 \to M \to Y \to X \to 0$ of $M$, the equality $\delta(M) = \mu(Y) - \mu(X)$ holds.
   \item
   The following conditions are equivalent.
   \begin{itemize}
   \item[(i)]
   The equality $\delta(M) = 0$ holds.
   \item[(ii)]
   For every $R$-homomorphism $f : M \to Z$ with $Z$ a finitely generated $R$-module of finite projective dimension, $f \otimes_R k = 0$.
   \end{itemize}
\end{enumerate}
\end{thm}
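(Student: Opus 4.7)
The plan is to prove part (1) first, since part (2) then follows from a short diagram chase.

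For part (1), I would first establish that $\mu(Y)-\mu(X)$ is independent of the FPD hull, and then verify the formula on one conveniently constructed hull. Invariance follows from the Auslander--Buchweitz uniqueness of approximations: any two FPD hulls $0\to M\to Y_i\to X_i\to 0$ ($i=1,2$) admit free modules $F_1,F_2$ with $Y_1\oplus F_1\cong Y_2\oplus F_2$ and $X_1\oplus F_1\cong X_2\oplus F_2$, because over a Gorenstein local ring the intersection of the classes ``finite projective dimension'' and ``MCM'' is exactly the class of free modules. Additivity of $\mu(-)$ on direct sums then yields $\mu(Y_1)-\mu(X_1)=\mu(Y_2)-\mu(X_2)$.

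To compute the invariant, I would build an explicit FPD hull from the minimal MCM approximation $0\to Y_M\to X_M\to M\to 0$. Since $X_M$ is totally reflexive over the Gorenstein ring $R$, dualizing a projective cover of $X_M^{\ast}=\Hom_R(X_M,R)$ produces a short exact sequence $0\to X_M\to F\to C\to 0$ with $F$ free and $C$ MCM. Forming the pushout of $M\twoheadleftarrow X_M\hookrightarrow F$ yields a module $W$ fitting into $0\to Y_M\to F\to W\to 0$ (certifying $\pd_R W<\infty$) and $0\to M\to W\to C\to 0$ (an FPD hull of $M$). A $\otimes_R k$ computation then uses the right-minimality of $X_M\to M$ (i.e.\ $Y_M\subseteq\mathfrak{m}X_M$, so $\mu(X_M)=\mu(M)$) together with the decomposition $X_M\cong X'_M\oplus R^{\delta(M)}$, where $X'_M$ has no free summand and so every $R$-linear map $X'_M\to R$ has image in $\mathfrak{m}$; this pins the image of $X_M\otimes_R k$ in $F\otimes_R k$ to have dimension exactly $\delta(M)$. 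The pushout computation then gives $\mu(W)=\mu(F)$ and $\mu(C)=\mu(F)-\delta(M)$, so $\mu(W)-\mu(C)=\delta(M)$.

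For part (2), both implications follow from a short diagram chase on any FPD hull $0\to M\xrightarrow{i}Y\to X\to 0$ together with (1). The right-exact sequence $M\otimes_R k\to Y\otimes_R k\to X\otimes_R k\to 0$ shows that the image of $i\otimes_R k$ equals $\ker(Y\otimes_R k\twoheadrightarrow X\otimes_R k)$, of dimension $\mu(Y)-\mu(X)=\delta(M)$ by (1). For (i)$\Rightarrow$(ii): since $X$ is MCM over a Gorenstein ring and $\pd_R Z<\infty$, $\Ext^1_R(X,Z)=0$, so applying $\Hom_R(-,Z)$ to the FPD hull gives a surjection $\Hom_R(Y,Z)\twoheadrightarrow\Hom_R(M,Z)$; every $f\colon M\to Z$ therefore extends to some $g\colon Y\to Z$, and $f\otimes_R k=(g\otimes_R k)\circ(i\otimes_R k)=0$ because $\delta(M)=0$ forces $i\otimes_R k=0$. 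For (ii)$\Rightarrow$(i), the contrapositive is immediate: if $\delta(M)>0$ then $i\otimes_R k\neq 0$, so taking $Z=Y$ (with $\pd_R Y<\infty$) and $f=i$ violates (ii).

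The main obstacle is the pushout generator count in part (1). Tracking the free summands of $X_M$ through the embedding into $F$ and through the pushout requires both the right-minimality of $X_M\to M$ and the fact that a totally reflexive module without free summand admits no surjection onto $R$; these two inputs combine to pin down the image of $X_M\otimes_R k$ in $F\otimes_R k$ and hence the resulting generator counts.
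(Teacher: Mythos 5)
Your overall strategy lines up with the paper's: part (2) is, in substance, the paper's proof of Theorem~\ref{orginm} (whence Corollary~\ref{cors}(1-ii)), and in part (1) the reduction to one convenient hull by a Schanuel-type argument followed by the pushout built from the minimal MCM approximation matches the proof of Proposition~\ref{FPDxi}. The paper reaches the same endpoint but takes a detour through the more general $n$-th approximated $\xi$-invariant over arbitrary local rings; your route is the direct Gorenstein one.

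There is, however, a genuine gap in part (1), at the parenthetical ``right-minimality of $X_M\to M$ (i.e.\ $Y_M\subseteq\mathfrak{m}X_M$, so $\mu(X_M)=\mu(M)$)''. Right minimality of a morphism with non-free source is \emph{not} the condition that its kernel lie in $\mathfrak{m}$ times the source; that description is special to free covers. For instance, over $R=k[[x,y,z]]/(xy-z^2)$ take $I=(x,z)$ and $M=I/(x)\cong R/(x,z)$. Then $0\to R\xrightarrow{\cdot x} I\to M\to 0$ is the minimal MCM approximation of $M$ (it is minimal because $I$ is indecomposable and non-free, hence shares no summand with $R$), yet $\mu(I)=2>1=\mu(M)$, so $Y_M=xR\not\subseteq\mathfrak{m}I$. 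The correct content of minimality here is Proposition~\ref{nABmin}: writing $X_M=X'_M\oplus R^{\delta}$ with $X'_M$ stable and $p_M=(p_0,p_1)$, minimality says that the composite $R^{\delta}\xrightarrow{p_1}M\twoheadrightarrow\Cok p_0$ is a minimal free cover. What your generator count actually requires is that the composite $Y_M\xrightarrow{i_M}X_M\xrightarrow{s}F$ have image in $\mathfrak{m}F$. That is true, but for a different reason than you state: the $X'_M$-component is absorbed because $X'_M$ is stable (so $s|_{X'_M}\otimes k=0$, an input you already invoke), while the $R^{\delta}$-component of $i_M(Y_M)$ lies in $\Ker\bigl(R^{\delta}\to\Cok p_0\bigr)\subseteq\mathfrak{m}R^{\delta}$ by the minimality condition just recalled (using $p_M\circ i_M=0$). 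This is precisely the point the paper argues carefully in the proofs of Theorem~\ref{orginm} and Proposition~\ref{FPDxi}, by supposing $F\to W$ is not a minimal free cover and extracting a free summand shared by $X_M$ and $Y_M$ through $i_M$. Your conclusion $\mu(W)=\mu(F)$ is correct, but the inference from right minimality to $Y_M\subseteq\mathfrak{m}X_M$ is not, and the step needs to be rerouted as above.
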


Let us now move on to Auslander’s approximation theory over more general two-sided noetherian rings.
Auslander and Bridger \cite{AB} developed an approximation theory for modules satisfying certain grade conditions, which refines the theory of MCM approximations over commutative Gorenstein local rings.  
Simon \cite{Sim} studied the dual notion of Auslander--Bridger approximations, called the {\em $n$-hull}.  
Over commutative Gorenstein local rings, Kato \cite{Kview} constructed a theory of {\em origin extensions}, which lie between MCM approximations and FPD hulls.
Here, over a general two-sided noetherian ring $\Lambda$, we formulate these approximations, extensions, and hulls as follows.
We denote by $\mod\Lambda$ the category of finitely generated right $\Lambda$-modules, and by $\pd_\Lambda M$ the projective dimension of a $\Lambda$-module $M$.

\begin{dfn}
Let $n \ge 0$ be an integer. We define the full subcategories $\mathscr{A}_n(\Lambda)$, $\mathscr{E}_n(\Lambda)$, and $\mathscr{H}_n(\Lambda)$ of $\mod \Lambda$ as follows:
\[
\begin{aligned}
\mathscr{A}_n(\Lambda) &= \left\{ 
  M \in \mod\Lambda \,\middle|\,
  \begin{matrix}\begin{aligned}&\text{there is an exact sequence } 0 \to Y \to X \to M \to 0 \text{ in } \mod\Lambda \text{ such that} \\
    &\pd_\Lambda Y\le  n - 1 \text{ and } \Ext^i_\Lambda(X,\Lambda) = 0 \text{ for all } 1 \le i \le n
  \end{aligned}\end{matrix}
\right\}, \\
\mathscr{E}_n(\Lambda) &= \left\{ 
  M \in \mod\Lambda \,\middle|\,
  \begin{matrix}\begin{aligned}
  &\text{there is an exact sequence } 0 \to X \to M \oplus P \to Y \to 0 \text{ in } \mod\Lambda \text{ such that}  \\
    &P \text{ is projective, } \pd_\Lambda Y\le n,  \text{ and } \Ext^i_\Lambda(X,\Lambda) = 0 \text{ for all } 1 \le i \le n
   \end{aligned}\end{matrix}
\right\}, \\
\mathscr{H}_n(\Lambda) &= \left\{ 
  M \in \mod\Lambda \,\middle|\,
  \begin{matrix}\begin{aligned}
  &\text{there is an exact sequence } 0 \to M \to Y \to X \to 0 \text{ in } \mod\Lambda \text{ such that} \\
    &\pd_\Lambda Y \le n \text{ and } \Ext^i_\Lambda(X,\Lambda) = 0 \text{ for all } 1 \le i \le n+1
  \end{aligned}\end{matrix}
\right\}.
\end{aligned}
\]

The short exact sequences appearing in the definitions of $\mathscr{A}_n(\Lambda)$, $\mathscr{E}_n(\Lambda)$, and $\mathscr{H}_n(\Lambda)$ are called an {\em $n$-AB approximation}, {\em $n$-origin extension}, and {\em $n$-FPD hull} of $M$, respectively.
\end{dfn}

Under the above definition, we have the inclusions $\mathscr{A}_n(\Lambda) \supset \mathscr{E}_n(\Lambda) \supset \mathscr{H}_n(\Lambda)$.
The subcategories $\mathscr{A}_n(\Lambda)$ and $\mathscr{E}_n(\Lambda)$ have been essentially studied in the Auslander--Bridger theory \cite{AB}.  
In particular, they proved that any finitely generated module satisfying the {\em $n$-th grade condition} (see Definition~\ref{nthgradecond}) admits an $n$-origin extension.  
More precisely, they used the $n$-th grade condition to construct a well-behaved filtration, called an {\em $n$-spherical filtration}, and used it to prove the result.
In Section~3 of the present paper, we prove the converse: if a finitely generated module admits an $n$-spherical filtration, then it satisfies the $n$-th grade condition. 
Furthermore, we show that if $\Lambda$ satisfies a certain grade condition on modules of finite projective dimension, then any finitely generated $\Lambda$-module admitting an $n$-origin extension also satisfies the $n$-th grade condition.
Specifically, when $\Lambda$ is commutative, the following three conditions are equivalent: satisfying the $n$-th grade condition, admitting an $n$-spherical filtration, and admitting an $n$-origin extension; see Theorem~\ref{norgeq}.
Also, the category $\mathscr{H}_n(\Lambda)$ is related to the notion of {\em delooping level}, which was introduced by G\'{e}linas \cite{Gel} and has been actively studied in recent years.  
In fact, as a consequence of Theorem~\ref{nFPDhullex}, any object of $\mathscr{H}_n(\Lambda)$ has delooping level at most $n$; see Remark~\ref{dell}(2).

As a generalization of Auslander's $\delta$-invariant, Martsinkovsky \cite{Mart96I} introduced the {\em $\xi$-invariant}, which is denoted by $\xi(M)$, for a finitely generated module $M$ over a commutative noetherian local ring $R$.  
He showed that the following theorem.

\begin{thm}[Martsinkovsky]\label{IntroMart}
Let $R$ be a commutative Gorenstein local ring, and let $M$ be a finitely generated $R$-module.
Then the equality $\delta(M)=\xi(M)$ holds.
\end{thm}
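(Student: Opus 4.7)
The plan is to reduce the identity $\delta(M)=\xi(M)$ to a single short exact sequence that computes both invariants. Over a commutative Gorenstein local ring $R$ of Krull dimension $d$, every finitely generated $R$-module $M$ admits a classical FPD hull $0\to M\to Y\to X\to 0$ with $\pd_R Y<\infty$ and $X$ maximal Cohen--Macaulay. Because $R$ is Gorenstein, $X$ being MCM forces $\Ext^i_R(X,R)=0$ for every $i\ge 1$, so the very same sequence is simultaneously an $n$-FPD hull in the sense of the paper for every $n\ge d$; in particular $M\in\mathscr{H}_n(R)$ once $n\ge d$.

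First I would apply Theorem~\ref{IntroAus}(1) to this sequence to obtain $\delta(M)=\mu(Y)-\mu(X)$. Next I would invoke the paper's non-decreasing sequence $\{\xi_n(M)\}_{n\ge 0}$ that converges to $\xi(M)$, together with the formula (established in the approximation-theoretic part of the paper) expressing $\xi_n(M)$ as $\mu(Y)-\mu(X)$ whenever $M$ admits an $n$-FPD hull $0\to M\to Y\to X\to 0$. Since the sequence above is an $n$-FPD hull for all $n\ge d$, this formula yields $\xi_n(M)=\mu(Y)-\mu(X)=\delta(M)$ for every such $n$. As $\{\xi_n(M)\}$ is non-decreasing, bounded above, and takes the constant value $\delta(M)$ for all $n\ge d$, its limit $\xi(M)$ must equal $\delta(M)$, completing the proof.

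The delicate step is the second one, namely ensuring that the $n$-FPD-hull formula for $\xi_n(M)$ is genuinely well-posed and specializes, in the Gorenstein case, to Auslander's formula $\mu(Y)-\mu(X)$. Two things must be checked: that the value $\mu(Y)-\mu(X)$ is independent of the particular $n$-FPD hull chosen (which should follow, as in Auslander's original argument, by stripping off free summands and using the essential uniqueness of the minimal MCM approximation over a Gorenstein ring), and that the stable-module-theoretic definition of $\xi$ used by Martsinkovsky indeed matches the approximation-theoretic $\xi_n$ constructed in this paper once $n$ is large enough. Once these compatibilities are established the theorem follows immediately from the limit argument; the real content is therefore the interpretation of $\xi$ in terms of $n$-FPD hulls, and the Gorenstein hypothesis enters only through the existence of an FPD hull and through the vanishing of $\Ext^{\ge 1}_R(X,R)$ for MCM $X$.
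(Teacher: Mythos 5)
Your argument is correct, but it follows a different route from the paper's. The paper derives the identity from Theorem~\ref{ABrank} alone: over a Gorenstein local ring of dimension $d$, a minimal $n$-AB approximation for $n\ge d$ is precisely the minimal MCM approximation $0\to Y_M\to X_M\to M\to 0$ (since $\Ext^{\ge1}_R(X_M,R)=0$), so Theorem~\ref{ABrank} gives $\xi_R(n,M)=$ rank of the maximal free summand of $X_M$, which is the \emph{definition} of $\delta_R(M)$; passing to the limit yields $\xi_R(M)=\delta_R(M)$ with no further input. You instead route the argument through the $n$-FPD hull, combining Auslander's classical formula $\delta_R(M)=\mu_R(Y)-\mu_R(X)$ from Theorem~\ref{IntroAus}(1) with Proposition~\ref{FPDxi}. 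That is logically sound and not circular, but it imports Auslander's Theorem~\ref{IntroAus}(1) as an external black box, whereas the paper's proof is self-contained (indeed, the paper \emph{recovers} Theorem~\ref{IntroAus}(1) as Corollary~\ref{cors}(1-iii), a separate output of the same machinery). The two ``delicate steps'' you flag are real but are both handled in the paper: independence of $\mu(Y)-\mu(X)$ from the chosen hull is shown at the start of the proof of Proposition~\ref{FPDxi} via uniqueness of the minimal hull, and the agreement of the stable-category definition of $\xi$ with the limit of $\xi_n$ is discussed just after Definition~\ref{xidef}. In short, your proof buys nothing extra and costs one additional classical input; the paper's route via $n$-AB approximations is the more economical one and is the reason the authors can claim a ``more elementary'' proof of Martsinkovsky's theorem.
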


Various properties that were known for the $\delta$-invariant have also been established for the $\xi$-invariant; see \cite{Mart96I, Mart96rem, Mart96II}.
However, the $\xi$-invariant is generally difficult to compute, as its definition involves Tate--Vogel cohomology.  
Our strategy, therefore, is to instead treat the following invariant. 
Let $R$ be a commutative noetherian local ring with residue field $k$, and let $M$ be a finitely generated $R$-module and $n \ge 0$ an integer.  
We define the {\em $n$-th approximated $\xi$-invariant} $\xi_n(M)$ by  
$$
\xi_n(M) := \dim_k \Ker\big( \Hom_R(M,k) \xrightarrow{\pi} \lhom_R(M,k) \xrightarrow{\syz^n} \lhom_R(\syz^n M,\syz^n k) \big),
$$
where $\lhom_R(-,-)$ denotes the quotient of $\Hom_R(-,-)$ by the submodule consisting of homomorphisms factoring through projective modules,  
$\pi$ is the canonical surjection, and $\syz^n$ denotes the $n$-th syzygy functor.
Then, we obtain a non-decreasing sequence of integers
$$
\xi_0(M) \le \xi_1(M) \le \cdots \le \xi_n(M) \le \xi_{n+1}(M) \le \cdots \le \xi(M),
$$
and the equality $$\xi(M) = \lim_{n \to \infty} \xi_n(M)$$ holds.
Our main results give expressions for the $n$-th approximated $\xi$-invariant of modules belonging to the subcategories $\mathscr{A}_n(R)$, $\mathscr{E}_n(R)$, or $\mathscr{H}_n(R)$.

\begin{thm}[Theorems~\ref{ABrank}, \ref{orginm} and Proposition~\ref{FPDxi}]\label{Main}
Let $R$ be a commutative noetherian local ring with residue field $k$.  
Let $M$ be a finitely generated $R$-module, and let $n \ge 0$ be an integer.
\begin{enumerate}[\rm(1)]
   \item
   Suppose that $M$ belongs to $\mathscr{A}_n(R)$.  
   Take a minimal $n$-AB approximation $0 \to Y_M \to X_M \to M \to 0$ of $M$.  
   Then $\xi_n(M)$ coincides with the rank of the maximal free summand of $X_M$.
   \item
   Suppose that $M$ belongs to $\mathscr{E}_n(R)$.  
   Then the following conditions are equivalent:
   \begin{itemize}
      \item[(i)]
      The equality $\xi_n(M) = 0$ holds.
      \item[(ii)]
      For every $R$-homomorphism $f : M \to Z$, where $Z$ is a finitely generated $R$-module of projective dimension at most $n$, $f \otimes_R k = 0$.
   \end{itemize}
   \item
   Suppose that $M$ belongs to $\mathscr{H}_n(R)$.
   For any $n$-FPD hull $0 \to M \to Y \to X \to 0$ of $M$, the equality $\xi_n(M) = \mu(Y) - \mu(X)$ holds.
\end{enumerate}
\end{thm}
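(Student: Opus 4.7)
My plan is to treat the three parts separately, tailoring the argument to the specific approximation, extension, or hull in each case, while exploiting a single structural fact: for any module $N$ satisfying $\Ext^i_R(N, R) = 0$ for $1 \le i \le n$, the $n$-th syzygy functor induces an isomorphism $\syz^n \colon \lhom_R(N, -) \xrightarrow{\sim} \lhom_R(\syz^n N, \syz^n -)$ of stable Hom groups. This fact, essentially due to Auslander--Bridger, serves as the technical backbone for all three parts, and I would record it first before proceeding.

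For part (1), I apply the Horseshoe Lemma to the minimal $n$-AB approximation $0 \to Y_M \to X_M \to M \to 0$. Since $\pd_R Y_M \le n-1$, the module $\syz^n Y_M$ is projective, yielding an isomorphism $\syz^n M \cong \syz^n X_M$ in $\lmod R$. Combined with the syzygy isomorphism applied to $X_M$, the map defining $\xi_n(M)$ becomes the composite $\Hom_R(M, k) \to \Hom_R(X_M, k) \xrightarrow{\pi} \lhom_R(X_M, k)$. A map $g \colon M \to k$ lies in the kernel exactly when its pullback to $X_M$ factors through a projective. Decomposing $X_M = F \oplus X_M'$ with $F$ the maximal free summand, any map from $X_M'$ to a free module lands in $\m R$ (else $X_M'$ would acquire a free summand), so such pullbacks are supported on $F$; the minimality of the approximation then ensures that the resulting subspace of $\Hom_R(M,k)$ has dimension equal to the rank of $F$.

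For part (2), the implication (i)~$\Rightarrow$~(ii) is essentially formal: given $f \colon M \to Z$ with $\pd_R Z \le n$, each component of the composite $M \to Z \to Z/\m Z \cong k^{\mu(Z)}$ is a map $M \to k$ factoring through $Z$, whose $n$-th syzygy factors through $\syz^n Z = 0$ in $\lmod R$; hence the component lies in $\Ker(\syz^n \circ \pi)$, which vanishes by hypothesis, forcing $f \otimes_R k = 0$. For the converse, the main step is to show that any $g \colon M \to k$ with $\syz^n\pi(g) = 0$ factors through a module of projective dimension at most $n$; such a module will be constructed from the $n$-origin extension $0 \to X \to M \oplus P \to Y \to 0$. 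The syzygy isomorphism for $X$ propagates the triviality of $\syz^n g$ to the pulled-back map $X \to k$, which therefore factors through a projective, and pushing this factorization out along $X \to M \oplus P$ together with the finiteness $\pd_R Y \le n$ produces the required factorization through a module of projective dimension at most $n$.

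Part (3) is the most direct. Applying $\Hom_R(-, k)$ to the $n$-FPD hull $0 \to M \to Y \to X \to 0$ yields an exact sequence
\[
0 \to \Hom_R(X,k) \to \Hom_R(Y,k) \to \Hom_R(M,k) \xrightarrow{\partial} \Ext^1_R(X,k),
\]
from which $\dim_k \image(\Hom_R(Y,k) \to \Hom_R(M,k)) = \mu(Y) - \mu(X)$. The claim reduces to identifying $\Ker(\syz^n \circ \pi)$ with this image, i.e., with the maps $M \to k$ lifting to $Y$. One containment is again immediate from $\pd_R Y \le n$. For the reverse, the hypothesis $\Ext^i_R(X, R) = 0$ for $1 \le i \le n+1$ yields $\Ext^1_R(X, k) \cong \lhom_R(\syz X, k)$, and combined with $\pd_R Y \le n$ (so that the connecting map $\syz X \to M$ induces an isomorphism $\syz^{n+1} X \cong \syz^n M$ in $\lmod R$) gives a further identification with $\lhom_R(\syz^n M, \syz^n k)$, under which $\partial(g)$ corresponds to $\syz^n\pi(g)$. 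The vanishing of the latter thus forces $\partial(g) = 0$, so $g$ lifts to $Y$. The principal obstacle throughout will be making this identification of syzygy-level obstructions with connecting maps in long exact $\Ext$ sequences precise enough to control all three kernels simultaneously.
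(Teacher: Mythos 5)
Your plan is correct and rests on the same technical backbone the paper uses, namely the syzygy isomorphism $\lhom_R(N,-)\cong\lhom_R(\syz^n N,\syz^n -)$ for $N$ with $\Ext^i_R(N,R)=0$ for $1\le i\le n$ (this is Lemma~\ref{VExtortho} and Corollary~\ref{symm}), and the easy direction (i)$\Rightarrow$(ii) of part (2) matches the paper. Part~(1) follows the paper's own route: $\V_n(M)=\{g\mid g\circ p_0=0\}\cong\Hom_R(\Cok p_0,k)$ where $p_0$ is the stable component of $p_M$, and minimality then forces $\mu(\Cok p_0)$ to equal the rank of $F$; be aware that this last equality is Proposition~\ref{nABmin} and needs a short argument rather than a one-clause appeal. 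You genuinely diverge from the paper in (ii)$\Rightarrow$(i) of part~(2) and in part~(3). In (2) you argue element-by-element: for $g\in\V_n(M)$, the composite $g\circ\iota_M\colon X\to k$ is stably zero (because $\underline{\syz^n\iota_M}$ is a stable isomorphism when $\pd_R Y\le n$ and $\syz^n$ is injective on $\lhom_R(X,k)$), so it factors through a projective $Q$, and the pushout $W$ of $\iota\colon X\to M\oplus P$ along $X\to Q$ sits in $0\to Q\to W\to Y\to 0$, giving $\pd_R W\le n$ with $g$ factoring through $M\to W$; hypothesis (ii) then kills $g$. This avoids both the minimal-free-cover counting and the representation-by-monomorphisms argument that the paper's proof of Theorem~\ref{orginm} uses to produce a single global module $\Cok\binom{p_M}{s}$. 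In (3) you read $\mu(Y)-\mu(X)$ as $\dim_k\image\bigl(\Hom_R(Y,k)\to\Hom_R(M,k)\bigr)$ in the $\Hom_R(-,k)$ long exact sequence and identify $\Ker\partial$ with $\V_n(M)$ by chaining $\Ext^1_R(X,k)\cong\lhom_R(\syz X,k)\cong\lhom_R(\syz^{n+1}X,\syz^n k)\cong\lhom_R(\syz^n M,\syz^n k)$, whereas the paper reduces to the minimal $n$-FPD hull and reconstructs it from the minimal $n$-AB approximation. Your version is a direct long-exact-sequence argument that needs no minimality at all; the paper's version has the compensating virtue of exhibiting the minimal hull concretely. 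When you write (3) out, do justify that the extension-class map $e\colon\syz X\to M$ becomes a stable isomorphism after applying $\syz^n$ (a horseshoe argument on the induced $n$-origin extension $0\to\syz X\to M\oplus P_0\to Y\to 0$, using $\pd_R Y\le n$) and that under $\Ext^1_R(X,k)\cong\lhom_R(\syz X,k)$ the connecting map $\partial$ really is composition with $\underline{e}$; both are standard but they are exactly where the identification could silently fail.
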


The minimality of the $n$-AB approximation appearing above is defined in the same way as in the case of MCM approximations; see Definition~\ref{mindef}.
We should mention that our main Theorem~\ref{Main} recovers both Theorem~\ref{IntroAus} due to Auslander and Theorem~\ref{IntroMart} due to Martsinkovsky; it is also worth noting that our methods give a much more elementary proof to Martsinkovsky's theorem, whose original proof is based on differential graded structures of free resolutions.
We remark that the proof of Theorem~\ref{IntroAus}(2) usually relies on the existence of an FPD hull, whereas our approach requires only the weaker assumption that $M$ belongs to $\mathscr{E}_n(R)$.

The organization of this paper is as follows.
In Section 2, we state several notions and their basic properties for later use.
In particular, we recall some fundamental concepts from stable module theory and the notion of approximations in the sense of Auslander--Smal\o.
In Section~3, for a general two-sided noetherian ring $\Lambda$, we describe the behavior of the subcategories $\mathscr{A}_n(\Lambda)$, $\mathscr{E}_n(\Lambda)$, and $\mathscr{H}_n(\Lambda)$ and their relation to Auslander’s grade theory.
Some of the results in this section are reformulations of results from \cite{AB, Omorp, OExt}, but we give explicit constructions of $n$-AB approximations, $n$-origin extensions, and $n$-FPD hulls.
We also prove the converse of a theorem of Auslander--Bridger concerning the existence of $n$-spherical filtrations mentioned above.
In Section 4, we study the $n$-th approximated $\xi$-invariant over commutative noetherian local rings.
The approach taken by Martsinkovsky in \cite{Mart96I, Mart96rem, Mart96II} to investigate the $\xi$-invariant relies on the theory of complexes, whereas our approach is based on the theory of stable module categories.
We not only prove Theorem \ref{Main}, but also investigate when the sequence of approximated $\xi$-invariants stabilizes, and how it behaves modulo a regular element.
As a consequence, we refine various results of Martsinkovsky; see Corollary~\ref{cors}.

\section{Preliminaries}
Throughout this paper, assume that all rings are two-sided noetherian, and call right modules simply modules.  
Let $\Lambda$ be a two-sided noetherian ring.  
We denote by $\mod\Lambda$ the category of finitely generated (right) $\Lambda$-modules, and by $\proj\Lambda$ the category of finitely generated projective $\Lambda$-modules.  
The opposite ring of $\Lambda$ is denoted by $\Lambda^{\mathrm{op}}$, and the $\Lambda$-dual $\Hom_\Lambda(-,\Lambda)$ is denoted by $(-)^\ast$.

\begin{dfn}
We denote by $\lmod\Lambda$ the {\em stable category} of $\mod\Lambda$.  
The objects of $\mod\Lambda$ are the same as those of $\mod\Lambda$.  
For objects $X, Y$ in $\mod\Lambda$, the morphism set is defined by
\[
\Hom_{\mod\Lambda}(X, Y) = \lhom_\Lambda(X, Y) = \Hom_\Lambda(X, Y)/\mathscr{P}(X, Y),
\]
where $\mathscr{P}(X, Y)$ denotes the subgroup of $\Hom_\Lambda(X, Y)$ consisting of $\Lambda$-homomorphisms that factor through some projective module.  
For any $\Lambda$-homomorphism $f : X \to Y$, we denote by $\underline{f}$ the image of $f$ in $\lhom_\Lambda(X, Y)$.
\end{dfn}

The following syzygy functor and (Auslander) transpose functor play a central role in the theory of stable module categories.

\begin{dfn}
Let $M$ be a finitely generated $\Lambda$-module and take a (finite) projective presentation $P_1 \xrightarrow{\partial} P_0 \to M \to 0$.
\begin{enumerate}[\rm(1)]
\item
We write the image of $\partial$ as $\syz M$ and call it the {\em syzygy} of $M$.
Then $\syz M$ is uniquely determined by $M$ up to projective summands. Taking the syzygy induces an additive functor $\syz : \lmod\Lambda \to \lmod\Lambda$.
For each integer $n \ge 1$, we define $\syz^n$ inductively by $\syz^n = \syz \circ \syz^{n-1}$.
\item
We write the cokernel of the $\Lambda$-dual $\partial^\ast$ as $\tr M$ and call it the {\em (Auslander) transpose} of $M$.
Then $\tr M$ is uniquely determined by $M$ up to projective summands. Taking the transpose induces an anti-equivalence $\tr : \lmod\Lambda \to \lmod\Lambda^{\mathrm{op}}$.
\end{enumerate}
\end{dfn}

It is well known that the pair of endofunctors $(\tr\syz\tr, \syz)$ on the stable category $\lmod\Lambda$ forms an adjoint pair; see \cite{AB} or \cite[Section~4]{Omorp} for instance.
Similarly, for any integer $n \ge 1$, the pair $(\tr\syz^n\tr, \syz^n)$ also forms an adjoint pair; that is, there are functorial isomorphisms
$$
\theta_{M, N}^n : \lhom_\Lambda(M, \syz^n N) \xrightarrow{\sim}\lhom_\Lambda(\tr\syz^n\tr M, N)$$
for all $M$, $N \in \lmod\Lambda$.
We denote the counit morphism of the above adjoint pair by $\underline{\psi^n_{(-)}} : \tr\syz^n\tr\syz^n(-) \to (-)$.  
That is, for each $M \in \lmod\Lambda$, the morphism $\underline{\psi^n_M}$ in $\lmod\Lambda$ is given by  
$$
\underline{\psi^n_M} = \theta^n_{\syz^n M, M}(\underline{1_{\syz^n M}}) : \tr\syz^n\tr\syz^n M \to M.
$$

Here, we recall the notions of right and left approximations by subcategories, as well as right and left minimality of morphisms, in the sense of Auslander and Smal\o\ \cite{AS}.

\begin{dfn}
Let $\mathscr{A}$ be an additive category, and let $\X$ be a subcategory of $\mathscr{A}$.
\begin{enumerate}[\rm(1)]
\item
A morphism $f : X \to M$ in $\mathscr{A}$ is called a {\em right $\X$-approximation} of $M$ if $X$ is an object of $\X$, and for any morphism $f' : X' \to M$ with $X' \in \X$, there exists a morphism $p : X' \to X$ such that $f' = f \circ p$. In other words, the map $\Hom_{\mathscr{A}}(X', f) : \Hom_{\mathscr{A}}(X', X) \to \Hom_{\mathscr{A}}(X', M)$ is surjective for any $X' \in \X$.
\item
A morphism $g : M \to X$ in $\mathscr{A}$ is called a {\em left $\X$-approximation} of $M$ if $X$ is an object of $\X$, and for any morphism $g' : M \to X'$ with $X' \in \X$, there exists a morphism $q : X \to X'$ such that $g' = q \circ g$. In other words, the map $\Hom_{\mathscr{A}}(g, X') : \Hom_{\mathscr{A}}(X, X') \to \Hom_{\mathscr{A}}(M, X')$ is surjective for any $X' \in \X$.
\end{enumerate}
\end{dfn}

\begin{dfn}
Let $\mathscr{A}$ be an additive category, and let $f : X \to Y$ be a morphism in $\mathscr{A}$.
\begin{enumerate}[\rm(1)]
\item
The morphism $f$ is said to be {\em right minimal} if every morphism $p : X \to X$ satisfying $f \circ p = f$ is an automorphism.
\item
The morphism $f$ is said to be {\em left minimal} if every morphism $q : Y \to Y$ satisfying $q \circ f = f$ is an automorphism.
\end{enumerate}
\end{dfn}

We will always deal with approximations within the category $\mod\Lambda$ of finitely generated $\Lambda$-modules.  
For morphisms $f : P \to M$ and $g : M \to Q$ in $\mod\Lambda$, where $P, Q \in \proj\Lambda$, note that $f$ is a right $\proj\Lambda$-approximation if and only if $f$ is surjective, and $g$ is a left $\proj\Lambda$- approximation if and only if $g^\ast$ is surjective.

The notion of grade, introduced in the 1950s by D. Rees in commutative ring theory, plays a central role in this paper as well.  
The theory of grade over noncommutative rings has also been extensively studied; see, for example, \cite{AusC, AB, AR1, AR2, HI}.

\begin{dfn}
Let $M$ be a $\Lambda$-module. The {\em grade} of $M$, written as $\grade_\Lambda M$, is defined to be the smallest integer $i \ge 0$ such that $\Ext^i_\Lambda(M, \Lambda) \ne 0$.
\end{dfn}
\section{Auslander's grade theory and approximation theory}

In this section, we consider approximation theory over general two-sided noetherian rings.  
The results presented in this section will play an important role in the next section.

Throughout this section, let $\Lambda$ be a two-sided noetherian ring.
The theory of $n$-AB approximations described below was developed by Auslander and Bridger \cite{AB}.
Here, “AB” stands for both Auslander--Bridger and Auslander--Buchweitz.  
The existence of $n$-origin extensions, named by Kato \cite{Kview}, was also established by Auslander and Bridger.

\begin{dfn}
Let $\Lambda$ be a two-sided noetherian ring, and let $M$ be a finitely generated $\Lambda$-module.
Let $n\ge0$ be an integer.
\begin{enumerate}[\rm(1)]
    \item
    A short exact sequence $0 \to Y \to X \to M \to 0$ in $\mod\Lambda$ is called an {\em $n$-AB approximation} of $M$ if $Y$ has projective dimension at most $n-1$ and $\Ext^i_{\Lambda}(X,\Lambda)=0$ for all $1\le i\le n$.
    We denote by $\mathscr{A}_n(\Lambda)$ the subcategory of $\mod\Lambda$ consisting of modules having $n$-AB approximations.
    \item 
    A short exact sequence $0 \to X \to M\oplus P \to Y \to 0$ in $\mod\Lambda$ is called an {\em $n$-origin extension} of $M$ if $P$ is projective, $Y$ has projective dimension at most $n$ and $\Ext^i_{\Lambda}(X,\Lambda)=0$ for all $1\le i\le n$.
    We denote by $\mathscr{E}_n(\Lambda)$ the subcategory of $\mod\Lambda$ consisting of modules having $n$-origin extensions.
    \item
    A short exact sequence $0 \to M\to Y\to X \to 0$ in $\mod\Lambda$ is called an {\em $n$-finite projective dimension hull} (simply, {\em $n$-FPD hull}) of $M$ if $Y$ has projective dimension at most $n$ and $\Ext^i_{\Lambda}(X,\Lambda)=0$ for all $1\le i\le n+1$.
    We denote by $\mathscr{H}_n(\Lambda)$ the subcategory of $\mod\Lambda$ consisting of modules having $n$-fulls.
\end{enumerate}
\end{dfn}

Let us record some immediate consequences from the definitions of the three subcategories above.

\begin{rmk}\label{bydef}
Let $n\ge0$ be an integer.
Denote by $\mathscr{T}_n(\Lambda)$ the subcategory of $\mod\Lambda$ consisting of modules $M$ satisfying $\Ext^i(M,\Lambda) = 0$ for all $1 \le i \le n$.  
Similarly, denote by $\mathscr{P}_n(\Lambda)$ the subcategory of $\mod\Lambda$ consisting of modules $M$ with projective dimension at most $n$.
\begin{enumerate}
   \item
   By definition, we have $\mathscr{A}_0(\Lambda) = \mathscr{E}_0(\Lambda) = \mod\Lambda$.  
   Moreover, as will be stated later, a consequence of the approximation theory developed by Auslander and Bridger shows that $\mathscr{A}_1(\Lambda) = \mod\Lambda$.  
   The subcategory $\mathscr{H}_0(\Lambda)$ is nothing but the subcategory of $\mod\Lambda$ consisting of finitely generated torsionless $\Lambda$-modules.
   Here, for the natural homomorphism $\varphi_M : M \to M^{\ast\ast}$ given by $\varphi_M(x)(f) = f(x)$ for $x \in M$ and $f \in M^\ast$, the module $M$ is said to be {\em torsionless} if $\varphi_M$ is injective, and {\em reflexive} if $\varphi_M$ is an isomorphism.
   \item
   Suppose that a finitely generated $\Lambda$-module $M$ has an $n$-AB approximation $0\to Y\to X\xrightarrow{f}M\to0$.
   Then $f$ is a right $\mathscr{T}_n(\Lambda)$-approximation of $M$.
   Indeed, note that $\Ext^1(X,Y)=0$ for all $X\in\mathscr{T}_n(\Lambda)$ and $Y\in\mathscr{P}_{n-1}(\Lambda)$.
   Therefore, for any morphism $f':X'\to M$ in $\mod\Lambda$ with $X'\in\mathscr{T}_n(\Lambda)$, we have an exact sequence
   $$
   \Hom(X',X)\to\Hom(X',M)\to\Ext^1(X',Y)=0,
   $$
   and so $f'$ factors through $f$.
   Dually, if $M$ admits an $n$-FPD hull $0 \to M \xrightarrow{g} Y \to X \to 0$, then $g$ is a left $\mathscr{P}_n(\Lambda)$-approximation.
   \item 
   There are inclusions $\mathscr{A}_n(\Lambda)\supset\mathscr{E}_n(\Lambda)\supset\mathscr{H}_n(\Lambda)$.
   For example, suppose that a finitely generated $\Lambda$-module $M$ has an $n$-FPD hull $0\to M\to Y\to X\to 0$.
   Taking the syzygy of $X$, we have an exact sequence $0\to \syz X\to M\oplus P\to Y\to0$ with $P\in\proj\Lambda$.
   This sequence is an $n$-origin hull of $M$.
   The other inclusion is proved in a similar way.
\end{enumerate}
\end{rmk}

The grade of Ext modules has been actively studied, mainly in the representation theory of non-commutative rings; see, for example, \cite{AB, AR1, AR2, Gel, H, HI, Omorp}.  
The behavior of the subcategories $\mathscr{A}_n(\Lambda)$, $\mathscr{E}_n(\Lambda)$, and $\mathscr{H}_n(\Lambda)$ defined above is closely related to grade theory.  
The terminology in (1) below is due to G\'{e}linas \cite{Gel}.

\begin{dfn}\label{nthgradecond}
Let $n \ge 0$ be an integer.
\begin{enumerate}[\rm(1)]
  \item
  Let $M$ be a finitely generated $\Lambda$-module.  
  We say that $M$ satisfies the {\em $n$-th grade condition} if the inequality
  $
  \grade_{\Lambda^{\mathrm{op}}} \Ext^i_\Lambda(M, \Lambda) \ge i
  $
  holds for every $1 \le i \le n$.
  \item
  We say that the ring $\Lambda$ satisfies the {\em $(\ast_n)$-condition} if every finitely generated $\Lambda$-module $Z$ with projective dimension at most $n$ satisfies the $n$-th grade condition.
\end{enumerate}
\end{dfn}

The condition in (2) was considered in \cite[Remark on page 70]{AB}, and serves as a key connection between approximation theory and grade theory.  
Typical examples of modules satisfying the $n$-th grade condition are $n$-spherical modules.  
For an integer $n > 0$, a finitely generated $\Lambda$-module $M$ is said to be {\em $n$-spherical} if it has projective dimension at most $n$ and satisfies $\Ext^i_\Lambda(M, \Lambda) = 0$ for all $1 \le i \le n-1$.
For further details on spherical modules, see also \cite{AB, Gel, Ost}.

\begin{lem}\label{astn}
Let $n > 0$ be an integer.  
Then the following statements hold.
\begin{enumerate}[\rm(1)]
  \item
  Every $n$-spherical module satisfies the $n$-th grade condition.
  \item
  Every noetherian ring satisfies the condition $(\ast_1)$.
  \item
  Every commutative noetherian ring satisfies the condition $(\ast_m)$ for all integers $m > 0$.
\end{enumerate}
\end{lem}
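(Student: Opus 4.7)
The plan is to handle the three parts separately: (1) via the transpose-syzygy machinery in the stable category, (2) as a direct corollary of (1), and (3) via the Auslander--Buchsbaum formula applied in localizations.

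For part (1), fix an $n$-spherical $M$. For $1\le i\le n-1$ the condition is automatic since $\Ext^i_\Lambda(M,\Lambda)=0$, so the real content is $\grade_{\Lambda^{\mathrm{op}}}\Ext^n_\Lambda(M,\Lambda)\ge n$. I would assume $\pd_\Lambda M = n$ (otherwise the Ext vanishes), choose a minimal projective resolution $0\to P_n\to\cdots\to P_0\to M\to 0$, and set $N:=\syz^{n-1}M$, so that $0\to P_n\to P_{n-1}\to N\to 0$ is exact. Dimension shifting through the vanishing intermediate Exts then yields $\Ext^n_\Lambda(M,\Lambda)\cong\Ext^1_\Lambda(N,\Lambda)$, and dualizing the length-one resolution of $N$ further identifies this with $\tr N=\Cok(P_{n-1}^\ast\to P_n^\ast)$. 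The task becomes $\grade_{\Lambda^{\mathrm{op}}}\tr N\ge n$, which I would split into two computations: (a) since $N$ is an $(n-1)$-th syzygy, the Auslander--Bridger fact that $k$-th syzygies are $k$-torsionfree \cite{AB} gives $\Ext^i_{\Lambda^{\mathrm{op}}}(\tr N,\Lambda)=0$ for $1\le i\le n-1$; (b) applying $(-)^\ast$ to the presentation $P_{n-1}^\ast\to P_n^\ast\to\tr N\to 0$, together with the identifications $P_i^{\ast\ast}\cong P_i$, forces $(\tr N)^\ast=\Ker(P_n\to P_{n-1})=0$.

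Part (2) is immediate from (1): any $Z\in\mod\Lambda$ with $\pd_\Lambda Z\le 1$ is vacuously $1$-spherical, since the intermediate Ext-vanishing condition is empty. For part (3), let $R$ be commutative noetherian and $Z\in\mod R$ with $\pd_R Z\le n$; I would use the commutative characterization $\grade_R L=\inf\{\depth R_{\p}:\p\in\supp_R L\}$ to reduce $\grade_R\Ext^i_R(Z,R)\ge i$ to the local statement that $\Ext^i_{R_{\p}}(Z_{\p},R_{\p})=0$ whenever $\depth R_{\p}<i$. When $Z_{\p}=0$ this is trivial; otherwise, the Auslander--Buchsbaum formula in $R_{\p}$ forces $\pd_{R_{\p}}Z_{\p}\le\depth R_{\p}<i$, killing the Ext.

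The hard part of the whole lemma will be step (a) in part (1), namely the invocation of Auslander--Bridger's theorem that $k$-th syzygies are $k$-torsionfree; the identification of $\Ext^n_\Lambda(M,\Lambda)$ with $\tr\syz^{n-1}M$ is straightforward dimension shifting, and parts (2), (3) then follow routinely.
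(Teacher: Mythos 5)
Your overall architecture is sound, and you give direct arguments where the paper simply cites \cite[Theorem~1.1]{Ost} for (1) and \cite[Corollary~4.18]{AB} for (3); that self-containedness is a plus. Part (3) via Auslander--Buchsbaum and localization is clean and correct, and (2) as a special case of (1) matches the paper exactly. But step (a) of your proof of (1) leans on a statement that is false as you phrase it: over a general two-sided noetherian ring it is \emph{not} true that a $k$-th syzygy is automatically $k$-torsionfree. This is precisely the content policed by the paper's Theorem~\ref{thmnABapp} (equivalently \cite[Proposition~2.26]{AB}): $\syz^i M$ is $i$-torsionfree for $1\le i\le n$ if and only if $\grade_{\Lambda^{\mathrm{op}}}\Ext^i_\Lambda(M,\Lambda)\ge i-1$ for $1\le i\le n-1$. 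So as written, step (a) has a gap.

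The gap is repairable and in fact your argument barely misses: because $M$ is $n$-spherical, the intermediate Ext modules $\Ext^i_\Lambda(M,\Lambda)$ vanish for $1\le i\le n-1$, so the grade hypotheses in Theorem~\ref{thmnABapp} hold vacuously, and $N=\syz^{n-1}M$ really is $(n-1)$-torsionfree. Alternatively, and perhaps more in the spirit of the rest of your argument, you can avoid invoking any torsionfreeness theorem at all: the spherical hypothesis makes the dualized complex $P_0^\ast\to P_1^\ast\to\cdots\to P_n^\ast$ exact in the middle, so $P_0^\ast\to\cdots\to P_n^\ast\to\Ext_\Lambda^n(M,\Lambda)\to 0$ is the start of a projective $\Lambda^{\mathrm{op}}$-resolution of $\tr N=\Ext^n_\Lambda(M,\Lambda)$; dualizing it back and using $P_i^{\ast\ast}\cong P_i$ together with exactness of $0\to P_n\to P_{n-1}\to\cdots\to P_0$ gives $\Ext^j_{\Lambda^{\mathrm{op}}}(\tr N,\Lambda)=0$ for $0\le j\le n-1$ in one stroke, absorbing both your steps (a) and (b). Either fix makes the proof complete; just don't cite ``$k$-th syzygies are $k$-torsionfree'' as an unconditional fact.
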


\begin{proof}
The statement (1) is a consequence of \cite[Theorem 1.1]{Ost}.  
By setting $n = 1$ in (1), we obtain (2).  
The statement (3) is nothing but \cite[Corollary 4.18]{AB}.
\end{proof}

The notion of $n$-torsionfree modules plays an important role in the stable module theory developed by Auslander and Bridger.  
Let $n \ge 0$ be an integer.  
A finitely generated $\Lambda$-module $M$ is said to be {\em $n$-torsionfree} if $\Ext^i_{\Lambda^{\mathrm{op}}}(\tr M, \Lambda) = 0$ for all $1 \le i \le n$. 
For a finitely generated $\Lambda$-module $M$, we have the Auslander exact sequence \cite{AB, AusC}:
$$
0\to \Ext^1_{\Lambda^{\mathrm{op}}}(\tr M,\Lambda)\to M\xrightarrow{\varphi_M}M^{\ast\ast}\to\Ext^2_{\Lambda^{\mathrm{op}}}(\tr M,\Lambda)\to 0.
$$
This exact sequence implies that 1-torsionfreeness is equivalent to being torsionless, and 2-torsionfreeness is equivalent to being reflexive.
The following theorem is a consequence of Auslander--Bridger theory and provides a fundamental result on the existence of $n$-AB approximations.

\begin{thm}[Auslander--Bridger]\label{thmnABapp}
Let $M$ be a finitely generated $\Lambda$-module and $n>0$ an integer.
Consider the following conditions.
\begin{enumerate}[\rm(1)]
   \item
   One has the inequality $\grade_{\Lambda^{\mathrm{op}}}\Ext^i_\Lambda(M,\Lambda)\ge i-1$ for all $1\le i\le n-1$.
   \item
   The $i$-th syzygy $\syz^i M$ is $i$-torsionfree for all $1\le i\le n$.
   \item
   The $n$-th syzygy $\syz^n M$ is $n$-torsionfree.
   \item
   The module $M$ is an object of $\mathscr{A}_n(\Lambda)$, i.e., $M$ has an $n$-AB approximation.
\end{enumerate}
Then the implications $(1)\Longleftrightarrow(2)\Longrightarrow (3)\Longleftrightarrow (4)$ hold.
If the equivalent conditions (3) and (4) are satisfied, then for a right $\proj \Lambda$-approximation $s: P \to M$ of $M$, the short exact sequence
$$
0 \to Y \to \tr \syz^n \tr \syz^n M \oplus P \xrightarrow{(\psi^n_M,\, s)} M \to 0,
$$
where $Y = \Ker(\psi^n_M, s)$, gives an $n$-AB approximation of $M$.
Also, if $\Lambda$ satisfies the $(\ast_{n-1})$-condition, then all conditions are equivalent.
In particular, when $\Lambda$ is commutative, all conditions are equivalent.
\end{thm}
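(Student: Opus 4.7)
The plan is to split the equivalence chain into $(1)\Leftrightarrow(2)$, $(2)\Rightarrow(3)$, and $(3)\Leftrightarrow(4)$, and then prove $(4)\Rightarrow(1)$ under the $(\ast_{n-1})$-hypothesis. The central tool is the short exact sequence
\[
0 \to \Ext^i_\Lambda(M,\Lambda) \to \tr\syz^{i-1}M \to \syz\tr\syz^i M \to 0 \qquad (i\ge 1),
\]
obtained by comparing the duals of two consecutive projective presentations of $M$ (equivalently, by splicing Auslander's exact sequence at successive levels). I will apply this sequence both to $M$ itself and to the module $W=\tr\syz^n M$.

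For $(1)\Leftrightarrow(2)$ I would induct on $n$. The base $n=1$ is trivial since $\syz M$ is always torsionless. For the inductive step, apply $(-)^\ast$ to the displayed sequence and invoke the dimension shift $\Ext^{j+1}(\tr\syz^i M,\Lambda)\cong \Ext^j(\syz\tr\syz^i M,\Lambda)$ for $j\ge 1$; combined with the automatic vanishing $\Ext^1(\tr\syz^i M,\Lambda)=0$ (since $\syz^i M$ is a syzygy, hence torsionless), the resulting long exact sequence identifies the vanishing of $\Ext^j(\tr\syz^i M,\Lambda)$ in the relevant range in terms of the vanishing of $\Ext^j(\tr\syz^{i-1}M,\Lambda)$ in a shifted range together with that of $\Ext^j(\Ext^i(M,\Lambda),\Lambda)$ up to $j=i-2$, which is precisely the grade bound $\grade_{\Lambda^{\mathrm{op}}}\Ext^i(M,\Lambda)\ge i-1$. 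The implication $(2)\Rightarrow(3)$ is immediate on taking $i=n$.

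For $(3)\Rightarrow(4)$ I would verify directly that the displayed sequence is an $n$-AB approximation. Writing $W=\tr\syz^n M$, condition (3) translates into $\Ext^i(W,\Lambda)=0$ for $1\le i\le n$. Plugging $W$ into the key exact sequence and iterating produces stable isomorphisms $\syz^k(\tr\syz^n W)\cong\tr\syz^{n-k}W$ in $\lmod\Lambda$ for $0\le k\le n$, and the same dimension-shift argument then yields $\tr\syz^n\tr\syz^n M\in\mathscr{T}_n(\Lambda)$. Surjectivity of $(\psi^n_M,s)$ follows from that of $s$, and the projective-dimension bound $\pd Y\le n-1$ follows from the triangular identity for the adjoint pair $(\tr\syz^n\tr,\syz^n)$: the stable isomorphism $\syz^n\tr\syz^n\tr\syz^n M\cong\syz^n M$ forces the Horseshoe comparison sequence at the appropriate syzygy level to collapse modulo projectives. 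The converse $(4)\Rightarrow(3)$ is parallel: from an $n$-AB approximation $0\to Y\to X\to M\to 0$ with $\pd Y\le n-1$, one gets $\syz^n X\cong\syz^n M$ in $\lmod\Lambda$, and the argument of $(3)\Rightarrow(4)$ applied to $X$ (using $\Ext^i(X,\Lambda)=0$ for $1\le i\le n$) shows that $\syz^n X$ is $n$-torsionfree.

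Finally, assuming $(\ast_{n-1})$, to derive (1) from (4) I would apply $(-)^\ast$ to the approximation and use $\Ext^i(X,\Lambda)=0$ for $1\le i\le n$ to extract the isomorphism $\Ext^i(M,\Lambda)\cong \Ext^{i-1}(Y,\Lambda)$ for $2\le i\le n$. Since $\pd Y\le n-1$, the $(\ast_{n-1})$-condition forces $\grade_{\Lambda^{\mathrm{op}}}\Ext^{i-1}(Y,\Lambda)\ge i-1$ on this range, while $i=1$ is vacuous. The commutative case follows from Lemma~\ref{astn}(3). I expect the principal obstacle to lie in $(3)\Rightarrow(4)$---specifically, verifying that the kernel $Y$ has projective dimension exactly $n-1$ rather than merely $n$---which demands careful use of the counit $\psi^n_M$ of the adjunction $(\tr\syz^n\tr,\syz^n)$ in conjunction with the Horseshoe lemma.
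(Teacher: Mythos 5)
The paper's own proof is essentially a pointer to Auslander--Bridger \cite{AB} (Propositions~2.21, 2.26 and Corollary~2.27), while you undertake a from-scratch reconstruction. The overall decomposition is sound, and the short exact sequence $0\to\Ext^i_\Lambda(M,\Lambda)\to\tr\syz^{i-1}M\to\syz\tr\syz^iM\to 0$ is indeed the right engine: iterating it with $W=\tr\syz^nM$ does yield $\syz^k\tr\syz^nW\cong\tr\syz^{n-k}W$ in $\lmod\Lambda^{\mathrm{op}}$ and therefore $\tr\syz^n\tr\syz^nM\in\mathscr{T}_n(\Lambda)$, and your deductions of $(2)\Rightarrow(3)$, $(4)\Rightarrow(3)$ and of condition (1) from (4) under $(\ast_{n-1})$ are the standard ones.

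There is, however, a genuine gap in $(3)\Rightarrow(4)$ exactly where you flag it, and the device you invoke does not close it. The triangular identity expresses $\underline{\syz^n\psi^n_M}$ as a split epimorphism in $\lmod\Lambda$; even granting that it is a stable isomorphism, feeding this into the Horseshoe construction at syzygy level $n$ only shows that $\syz^nY$ is projective, i.e.\ $\pd_\Lambda Y\le n$. To obtain $\pd_\Lambda Y\le n-1$ one would need $\underline{\syz^{n-1}(\psi^n_M,s)}$ to be a stable isomorphism, and there is no reason this should follow from the adjunction alone: $\syz$ is not an autoequivalence of $\lmod\Lambda$, so a split epi at level $n$ does not descend to one at level $n-1$. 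The way \cite[Proposition 2.21]{AB} actually gets the bound is by writing $\psi^n_M$ explicitly as a map induced on cokernels by a morphism of projective presentations whose top component is the identity; for $n=1$, one presents $\syz\tr\syz M$ by $Q\to P_1^*\to\syz\tr\syz M\to 0$ with $Q\twoheadrightarrow\Ker(P_1^*\to P_2^*)$, dualizes to $\tr\syz\tr\syz M=\Cok(P_1\to Q^*)$, lifts $P_0^*\to Q$ to get $Q^*\to P_0$ making the square over $P_1\xrightarrow{=}P_1$ commute, and then one checks directly that $\Ker(\psi^1_M,s)\cong Q^*$. It is this bookkeeping, not a formal consequence of the Horseshoe lemma plus the adjunction, that forces the kernel to have projective dimension at most $n-1$; that computation is the missing step. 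As a side remark: the range in condition (1) is likely intended to be $1\le i\le n$ rather than $1\le i\le n-1$ (for $n=2$ the stated range makes (1) vacuous while (2) requires $\syz^2M$ to be reflexive, which fails over a non-Gorenstein Artinian ring), and your own derivation of (1) from (4) in fact produces the grade inequalities on the full range $1\le i\le n$.
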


\begin{proof}
The equivalence $(1) \Longleftrightarrow (2)$ is nothing but \cite[Proposition 2.26]{AB}.  
The implication $(2) \Longrightarrow (3)$ is clearly true.  
The equivalence $(3) \Longleftrightarrow (4)$ is also nothing but \cite[Proposition 2.21]{AB}, whose proof shows that the morphism $\underline{\psi^n_M} : \tr \syz^n \tr \syz^n M \to M$ gives an $n$-AB approximation.  
If $\Lambda$ satisfies the $(\ast_{n-1})$ condition, then the implication $(3) \Longrightarrow (2)$ follows from \cite[Corollary 2.27]{AB}.
\end{proof}

Next, we describe the relationship between $n$-origin extensions and the $n$-th grade condition.  
To do this, the notion of representation by monomorphisms plays an important role.
A $\Lambda$-homomorphism $f : X \to Y$ of finitely generated $\Lambda$-modules is said to be {\em represented by monomorphisms} if there exist $P, Q \in \proj \Lambda$ and $\Lambda$-homomorphisms $s, t, u$ such that the homomorphism
$$
\begin{pmatrix} f & s \\ t & u \end{pmatrix} : X \oplus P \to Y \oplus Q
$$
is a monomorphism.
It is easy to see that this condition is equivalent to the existence of a projective module $Q \in \proj \Lambda$ and a $\Lambda$-homomorphism $t : X \to Q$ such that the map $\binom{f}{t} : X \to Y \oplus Q$ is a monomorphism.  
For the theory of representation by monomorphisms, we refer the reader to \cite{AB, Kmorp, Omorp}.  
Under this notion, the following theorem holds.  
Auslander and Bridger proved that the $n$-th grade condition implies the existence of $n$-origin extensions and $n$-spherical approximations.  
We will prove the converse of Auslander and Bridger’s theorem.

\begin{thm}\label{norgeq}
Let $M$ be a finitely generated $\Lambda$-module and $n \ge 0$ an integer.  
Consider the following conditions.
\begin{enumerate}[\rm(1)]
   \item
   One has the inequality $\grade_{\Lambda^{\mathrm{op}}} \Ext^i_\Lambda(M, \Lambda) \ge i$ for all $1 \le i \le n$.
   \item
   One has the inequality $\grade_{\Lambda^{\mathrm{op}}} \Ext^i_\Lambda(M, \Lambda) \ge i - 1$, and the morphism $\underline{\psi^i_M} : \tr \syz^i \tr \syz^i M \to M$ is represented by monomorphisms for all $1 \le i \le n$.
   \item
   The module $M$ has an $n$-th spherical approximation, i.e., there exists a filtration
   $$
   M_n \subset M_{n-1} \subset \cdots \subset M_1 \subset M_0 = M \oplus P
   $$
   with $P \in \proj \Lambda$, such that for every $1 \le k \le n$, the following conditions hold:  
   $\Ext^i_{\Lambda}(M_k, \Lambda) = 0$ for all $1 \le i \le k$,  
   the quotient $M_{k-1} / M_k$ is $k$-spherical, and the $\Lambda$-dual map $M_{k-1}^\ast \to M_k^\ast$ is surjective.
   \item
   The module $M$ is an object of $\mathscr{E}_n(\Lambda)$, i.e., $M$ has an $n$-origin extension.
   \item
   The module $M$ has an $n$-origin extension 
   $
   0 \to X \to M \oplus P \to Y \to 0
   $
   such that the $\Lambda$-dual sequence 
   $
   0 \to Y^\ast \to (M \oplus P)^\ast \to X^\ast \to 0
   $
   is also exact.
   \item
   The module $M$ is an object of $\mathscr{A}_n(\Lambda)$, and there exists an $n$-AB approximation 
   $
   0 \to Z \to X \xrightarrow{f} M \to 0
   $
   such that the homomorphism $f : X \to M$ is represented by monomorphisms.
\end{enumerate}
Then the implications  
$
(1) \Longleftrightarrow (2) \Longleftrightarrow (3) \Longrightarrow (4) \Longleftrightarrow (5) \Longleftrightarrow (6)
$
hold.
When the equivalent conditions $(4)$–$(6)$ hold, for any $n$-AB approximation $0 \to Z \to X \xrightarrow{f} M \to 0$ of $M$ with $f$ represented by monomorphisms, and for any left $\proj\Lambda$-approximation $s : X \to P$ of $X$, the homomorphism $\binom{f}{s}:X\to M\oplus P$ is injective and the short exact sequence
\[
0 \to X \xrightarrow{\binom{f}{s}} M \oplus P \to Y \to 0,
\]
where $Y = \Cok\binom{f}{s}$, is an $n$-origin extension of $M$.
Moreover, if $\Lambda$ satisfies the $(\ast_n)$-condition, then all the conditions are equivalent.  
In particular, when $\Lambda$ is commutative, all conditions are equivalent.
\end{thm}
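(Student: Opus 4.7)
The plan is to prove the implications in three stages: first the equivalences $(1)\Leftrightarrow(2)\Leftrightarrow(3)$; next the equivalences $(4)\Leftrightarrow(5)\Leftrightarrow(6)$ together with the explicit construction stated in the theorem; and finally the bridge $(3)\Rightarrow(5)$ and the implication $(4)\Rightarrow(1)$ under the $(\ast_n)$-condition. For $(1)\Leftrightarrow(2)$ I would use the characterisation, extracted from the Auslander exact sequence for $\syz^i M$ combined with the adjoint description of $\underline{\psi^i}$, that under the base hypothesis $\grade\Ext^j_\Lambda(M,\Lambda)\ge j-1$ the stronger inequality $\grade\Ext^i_\Lambda(M,\Lambda)\ge i$ is equivalent to $\underline{\psi^i_M}$ being represented by monomorphisms. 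The implication $(2)\Rightarrow(3)$ is the Auslander--Bridger iterative construction: Theorem~\ref{thmnABapp} produces $k$-AB approximations step by step, and the representation-by-monomorphism hypothesis promotes each to an honest injection $M_k\hookrightarrow M_{k-1}$, yielding the filtration. For the new converse $(3)\Rightarrow(1)$, I would induct downward on $k$ starting from $M_n$ (where $\Ext^i(M_n,\Lambda)=0$ for $1\le i\le n$, so the $n$-th grade condition holds vacuously). Applying the long exact sequence in $\Ext^\bullet_\Lambda(-,\Lambda)$ to $0\to M_k\to M_{k-1}\to M_{k-1}/M_k\to 0$: since $M_{k-1}/M_k$ is $k$-spherical, its Ext is concentrated in degree $k$ with grade $\ge k$ by Lemma~\ref{astn}(1). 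Hence for $k<i\le n$ one has $\Ext^i(M_{k-1},\Lambda)\cong\Ext^i(M_k,\Lambda)$, inheriting grade $\ge i$ by induction; at $i=k$ with $k\ge 2$ one gets a short exact sequence $0\to\Ext^k(M_{k-1}/M_k,\Lambda)\to\Ext^k(M_{k-1},\Lambda)\to\Ext^k(M_k,\Lambda)\to 0$ of modules of grade $\ge k$, and an extension of such has grade $\ge k$ since $\Ext^\bullet(-,\Lambda)$-vanishing on both ends forces vanishing in the middle. The boundary case $k=1$ realises $\Ext^1(M_0,\Lambda)$ as a quotient of $\Ext^1(M_0/M_1,\Lambda)$, which retains grade $\ge 1$ via left-exactness of $\Hom_{\Lambda^{\mathrm{op}}}(-,\Lambda)$. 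Iterating propagates the $n$-th grade condition to $M_0=M\oplus P$, hence to $M$.

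For the second block: $(5)\Rightarrow(4)$ is trivial. For $(4)\Rightarrow(5)$, given $0\to X\xrightarrow{g}M\oplus P\to Y\to 0$, take a left $\proj\Lambda$-approximation $t:X\to Q$ and form $0\to X\xrightarrow{\binom{g}{t}}M\oplus P\oplus Q\to Y'\to 0$; a snake lemma yields $0\to Q\to Y'\to Y\to 0$ (so $\pd Y'\le n$), and surjectivity of $t^\ast$ makes the dual short exact. For $(6)\Rightarrow(5)$ I would verify the explicit construction in the theorem: given $0\to Z\to X\xrightarrow{f}M\to 0$ with $f$ represented by monomorphisms and $s:X\to P$ a left $\proj\Lambda$-approximation, any mono-witness $(f,t):X\hookrightarrow M\oplus Q$ factors through $s$, forcing $s|_Z$ to be injective; hence $\binom{f}{s}$ is injective, and a snake lemma along the projection $M\oplus P\to M$ identifies $\Cok\binom{f}{s}$ with $P/s(Z)$, yielding $\pd Y\le 1+\pd Z\le n$ and dual-exactness from surjectivity of $s^\ast$. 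For $(5)\Rightarrow(6)$, pull back the origin extension along a projective surjection $Q_0\twoheadrightarrow Y$; the resulting $0\to X\to E\to Q_0\to 0$ splits because $Q_0$ is projective, yielding an AB approximation $0\to\syz Y\to X\oplus Q_0\to M\oplus P\to 0$ of $M\oplus P$. Projecting away the $P$-summand (possible because $P$ is projective) produces $0\to\syz Y\to K\to M\to 0$ with $K\oplus P\cong X\oplus Q_0$, and the restriction to $K$ of the $Q_0$-projection, combined with injectivity of $X\hookrightarrow M\oplus P$, witnesses that $K\to M$ is represented by monomorphisms. Finally, $(3)\Rightarrow(5)$ is immediate: set $X=M_n$ and $Y=M_0/M_n$; the filtration realises $Y$ as an iterated extension of spherical modules (so $\pd Y\le n$), and the iterated surjectivity $M_{k-1}^\ast\twoheadrightarrow M_k^\ast$ gives dual-exactness.

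For the final assertion under $(\ast_n)$: assuming $(4)$ with $0\to X\to M\oplus P\to Y\to 0$, the condition $\pd Y\le n$ combined with $(\ast_n)$ forces $Y$ to satisfy the $n$-th grade condition. The long exact sequence in $\Ext^\bullet_\Lambda(-,\Lambda)$, together with $\Ext^i(X,\Lambda)=0$ for $1\le i\le n$, yields $\Ext^i(M\oplus P,\Lambda)\cong\Ext^i(Y,\Lambda)$ for $2\le i\le n$ and exhibits $\Ext^1(M\oplus P,\Lambda)$ as a quotient of $\Ext^1(Y,\Lambda)$, which inherits grade $\ge 1$ by left-exactness of $\Hom_{\Lambda^{\mathrm{op}}}(-,\Lambda)$; this gives $(1)$, and the commutative case is Lemma~\ref{astn}(3). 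I expect the main obstacle to be the converse $(3)\Rightarrow(1)$, where the inductive bookkeeping of grades and vanishings through the spherical filtration requires careful use of the long exact sequence at each level; a secondary difficulty lies in the pullback-and-split argument for $(5)\Rightarrow(6)$, where one must cleanly isolate the projective summand and verify the explicit monomorphism witness.
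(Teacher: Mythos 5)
Your proposal is correct and follows essentially the same approach as the paper: the new implication $(3)\Rightarrow(1)$ is the same long-exact-sequence analysis down the spherical filtration, terminating in Lemma~\ref{astn}(1); the block $(4)\Leftrightarrow(5)\Leftrightarrow(6)$ is handled by adjoining a left $\proj\Lambda$-approximation exactly as the paper does (the paper cites \cite{Omorp} for injectivity of $\binom{f}{s}$, which you rederive directly from the factorisation of a monomorphism witness through $s$); and the $(\ast_n)$ step is the same $\Ext^i(M,\Lambda)\cong\Ext^i(Y,\Lambda)$ comparison, though you use only condition $(4)$ together with preservation of $\grade\ge 1$ under quotients, whereas the paper simply passes to $(5)$. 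A couple of your long-exact-sequence terms carry extraneous nonzero entries---at $i=k$ in your $(3)\Rightarrow(1)$ argument the term $\Ext^k_\Lambda(M_k,\Lambda)$ already vanishes by the filtration condition, so your purported extension is really an isomorphism, and similarly $\Ext^1_\Lambda(M_0,\Lambda)\cong\Ext^1_\Lambda(M_0/M_1,\Lambda)$ is an isomorphism rather than merely a quotient---but since your more general extension and quotient arguments subsume these degenerate cases, nothing breaks.
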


\begin{proof}
The equivalence $(1) \Longleftrightarrow (2)$ is none other than \cite[Corollary~2.32]{AB}.  
The implications $(1) \Longrightarrow (3)$ and $(1) \Longrightarrow (5)$ follow from \cite[Theorem~2.37]{AB} and \cite[Theorem~2.41]{AB}, respectively.
To prove the implication $(3) \Longrightarrow (1)$, suppose that $M$ admits an $n$-spherical filtration
\[
M_n \subset M_{n-1} \subset \cdots \subset M_1 \subset M_0 = M \oplus P.
\]
For each $1 \le k \le n$, set $C_k = M_{k-1} / M_k$.
Since the $\Lambda$-dual map $M_{k-1}^\ast \to M_k^\ast$ is surjective, we obtain a long exact sequence
\[
0 \to \Ext^1(C_k, \Lambda) \to \Ext^1(M_{k-1}, \Lambda) \to \Ext^1(M_k, \Lambda) \to \Ext^2(C_k, \Lambda)\to\cdots.
\]
By assumption, we have isomorphisms $\Ext^k(C_k, \Lambda) \cong \Ext^k(M_{k-1}, \Lambda)$ and $\Ext^j(M_{k-1}, \Lambda) \cong \Ext^j(M_k, \Lambda)$ for all $j > k$.
Therefore, for every $1 \le i \le n$, we obtain
\[
\Ext^i(M, \Lambda) \cong \Ext^i(M_0, \Lambda) \cong \Ext^i(M_1, \Lambda) \cong \cdots \cong \Ext^i(M_{i-1}, \Lambda) \cong \Ext^i(C_i, \Lambda).
\]
Since $C_i$ is $i$-spherical, it follows from Lemma~\ref{astn} that $\grade \Ext^i(M, \Lambda) = \grade \Ext^i(C_i, \Lambda) \ge i$.

Let us prove the equivalence between the conditions $(4)$, $(5)$ and $(6)$.
The implication $(5)\Longrightarrow(4)$ clearly holds.
Next, we prove the implication $(4)\Longrightarrow(6)$.
Assume the condition (4) holds.
Then $M$ has an $n$-origin extension  
$
0 \to X \xrightarrow{\binom{f}{s}} M \oplus P \to Y \to 0.
$  
Taking a syzygy of $Y$, we obtain a short exact sequence  
$
0 \to \syz Y \to X \oplus Q \xrightarrow{(f, t)} M \to 0,
$  
with $Q$ projective.
Since $f$ is represented by monomorphisms, this sequence gives the desired $n$-AB approximation.
Conversely, assume condition $(6)$.  
Then there exists an $n$-AB approximation $0 \to Z \to X \xrightarrow{f} M \to 0$ such that $f$ is represented by monomorphisms.  
Since $s : X \to P$ is a left $\proj \Lambda$-approximation of $X$, it follows from \cite[Theorem~3.3 and Lemma~3.4]{Omorp} that the homomorphism  
$
\binom{f}{s} : X \to M \oplus P
$
is an injection.  
We obtain the following commutative diagram with exact rows and columns:
$$
\xymatrix@R-1pc@C-1pc{
&&0\ar[d]&&0\ar[d]&&&&\\
0\ar[rr]&&Z\ar[rr]\ar[dd]&&P\ar[rr]\ar[dd]^-{\binom{0}{1}}&&Y\ar[rr]\ar@{=}[dd]&&0\\
&&&&&&&\\
0\ar[rr]&&X\ar[rr]^-{\binom{f}{s}}\ar[dd]^-{f}&&M\oplus P\ar[dd]^-{(1, 0)}\ar[rr]&&Y\ar[rr]&&0\\
&&&&&&&\\
&&M\ar@{=}[rr]\ar[d]&&M\ar[d]&&\\
&&0&&0,&&
}
$$
where $Y = \Cok\binom{f}{s}$.
From the first row, we see that $Y$ has projective dimension at most $n$, and the second row
$
0 \to X \xrightarrow{\binom{f}{s}} M \oplus P \to Y \to 0
$
is an $n$-origin extension of $M$\footnote{The module $\Cok\binom{f}{s}$ is called the \emph{stable cokernel} of the morphism $f$, and is denoted by $\Cokv f$; see \cite{Kkern, Kmorp, Omorp}.}.  
Also, since $s : X \to P$ is a left $\proj\Lambda$-approximation of $X$, the $\Lambda$-dual homomorphism
$
{\binom{f}{s}}^\ast : (M \oplus P)^\ast \to X^\ast
$
is surjective.  
Thus, the condition (5) holds.

Finally, let us assume that $\Lambda$ satisfies the condition $(\ast_n)$, and let $M$ be an object of $\mathscr{E}_n(\Lambda)$.  
By the condition (5), there exists an $n$-origin extension  
$
0 \to X \to M \oplus P \to Y \to 0
$
of $M$ such that the $\Lambda$-dual homomorphism $(M \oplus P)^\ast \to X^\ast$ is surjective.  
Then, from the long exact sequence of $\Ext$, we obtain isomorphisms
$
\Ext^i(M, \Lambda) \cong \Ext^i(Y, \Lambda)
$
for all $1 \le i \le n$.  
Since $Y$ has projective dimension at most $n$, the assumption implies that
$
\grade \Ext^i(M, \Lambda) = \grade \Ext^i(Y, \Lambda) \ge i
$
for all $1\le i\le n$.
\end{proof}

As stated in the theorems above, the existence of $n$-AB approximations and $n$-origin extensions is deeply connected to the grade of Ext modules.  
From this, we obtain the following results concerning the properties of the categories $\mathscr{A}_n(\Lambda)$ and $\mathscr{E}_n(\Lambda)$.

\begin{rmk}\label{excl}
Let $n>0$ be an integer.
\begin{enumerate}
   \item
   Let $R$ be a commutative noetherian ring.
Then, the subcategories $\mathscr{A}_n(R)$ and $\mathscr{C}’_n(R)$ of $\mod R$ are closed under extensions.
Indeed, by Theorem \ref{thmnABapp}, a finitely generated $R$-module $M$ is in $\mathscr{A}_n(R)$ if and only if $\grade\Ext^i(M,R)\ge i-1$ for all $1\le i\le n$.
For a short exact sequence $0\to X\to Y\to Z\to0$ in $\mod R$ with $X$, $Z\in\mathscr{A}_n(R)$, we have an exact sequence $\Ext^i(Z,R)\to \Ext^i(Y,R)\to\Ext^i(X,R)$ for any $i$.
When the Ext modules on both ends vanish by localization at a prime ideal $\p$ of $R$, the same holds for the middle term.  
Hence, we have $\grade\Ext^i(Y, R)\ge i-1$ for all $1\le i\le n$, and $Y\in\mathscr{A}_n(R)$.  
The same argument applies to $\mathscr{E}_n(R)$.
On the other hand, $\mathscr{H}_n(R)$ is, in general, not closed under extensions.  
For example, if $R$ is a non-Gorenstein artinian ring, then $\mathscr{H}_1(R)$, the subcategory of torsionless modules in $\mod R$, is not closed under extensions.
   \item 
   When do the subcategories $\mathscr{A}_n(\Lambda)$ and $\mathscr{E}_n(\Lambda)$ coincide with $\mod\Lambda$?  
   The stronger conditions considered in Theorems~\ref{thmnABapp} and~\ref{norgeq} above have been studied.  
   Namely, we consider the following:
   \begin{itemize}
      \item[(i)]
      One has the inequality $\grade\Ext^i(M,\Lambda)\ge i-1$ for all $M\in\mod\Lambda$ and $1\le i\le n$.
      \item[(ii)]
      One has the inequality $\grade\Ext^i(M,\Lambda)\ge i$ for all $M\in\mod\Lambda$ and $1\le i\le n$.
   \end{itemize}
   Clearly, the condition (ii) implies (i).  
   The condition (i) (resp.\ (ii)) is denoted by $(\mathrm{d}_n)$ (resp.\ $(\mathrm{b}_n)$) in \cite{AR1}, and by $g_n(0)$ (resp.\ $g_n(1)$) in \cite{HI}.  
   As summarized in \cite{HI}, an $n$-Gorenstein ring (in the sense of Auslander) satisfies the condition (ii).  
   Moreover, it is stated there that condition (ii) for $\Lambda$ is equivalent to $\Lambda^{\mathrm{op}}$ being quasi $n$-Gorenstein.
   See also \cite{IyaSym}.
   For a commutative noetherian ring $R$, by \cite[Proposition 2.41]{AB}, the conditions (i) and (ii) are equivalent to the following one.
   \begin{itemize}
   \item[(iii)]
   For any prime ideal $\p$ of $R$ with $\depth R_{\p}\le n-1$, the localization $R_\p$ is Gorenstein.
   \end{itemize}
\end{enumerate}
\end{rmk}

The situation where Ext modules have higher grade is considered in \cite[Section 4]{Omorp}.
As a consequence, the following proposition is obtained.

\begin{prop}\label{highgrade}
Let $M$ be a finitely generated $\Lambda$-module, and let $n, k\ge0$ be integers.
Consider the following conditions.
\begin{enumerate}[\rm(1)]
   \item
   One has the inequality $\grade_{\Lambda^{\mathrm{op}}}\Ext^i_\Lambda(M,\Lambda)\ge i+k$ for all $1\le i\le n$.
   \item
   The module $M$ has an $n$-origin extension $0\to X\to M\oplus P\to Y\to0$ such that $Y$ is $k$-torsionfree.
\item
   The module $M$ has an $n$-origin extension $0\to X\to M\oplus P\to Y\to0$ such that $Y$ is $k$-torsionfree and the $\Lambda$-dual sequence $0\to Y^\ast\to (M\oplus P)^\ast\to X^\ast\to 0$ is also exact.
\end{enumerate}
Then the implications $(1)\Longrightarrow(2)\Longleftrightarrow(3)$ hold.
Moreover, if $\Lambda$ satisfies the $(\ast_{n+k})$-condition, then all conditions are equivalent.
In particular, when $\Lambda$ is commutative, all conditions are equivalent.
\end{prop}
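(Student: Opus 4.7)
The plan is to follow the proof of Theorem~\ref{norgeq}, carrying along the extra datum of $k$-torsionfreeness of the cokernel. The implication $(3)\Longrightarrow(2)$ is immediate. For $(2)\Longrightarrow(3)$, I would mimic the second half of the proof of Theorem~\ref{norgeq}: given an $n$-origin extension $0\to X\xrightarrow{\iota}M\oplus P\to Y\to 0$ with $Y$ $k$-torsionfree, take a left $\proj\Lambda$-approximation $s\colon X\to P'$ of $X$ and form the enlarged sequence
\[
0\to X\xrightarrow{\binom{\iota}{s}}M\oplus P\oplus P'\to W\to 0.
\]
The injectivity of $\binom{\iota}{s}$ is automatic, and the $\Lambda$-dual sequence is surjective because $s$ is a left $\proj\Lambda$-approximation, so every map $X\to \Lambda$ factors through $X\to P'$ and hence through $X\to M\oplus P\oplus P'$. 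The new cokernel $W$ fits in a short exact sequence $0\to P'\to W\to Y\to 0$ with $P'$ projective and $Y$ $k$-torsionfree, so by the Auslander--Bridger closure of $k$-torsionfreeness under extensions, $W$ is also $k$-torsionfree; one readily checks $\pd W\le n$, yielding a dual-exact $n$-origin extension of the desired type.

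For $(1)\Longrightarrow(2)$, I would use the $n$-spherical filtration $M_n\subset M_{n-1}\subset\cdots\subset M_0=M\oplus P$ furnished by Theorem~\ref{norgeq}, whose successive quotients $C_i=M_{i-1}/M_i$ are $i$-spherical and satisfy $\Ext^i_\Lambda(C_i,\Lambda)\cong\Ext^i_\Lambda(M,\Lambda)$ (as in the proof there), so $\grade_{\Lambda^{\mathrm{op}}}\Ext^i_\Lambda(C_i,\Lambda)\ge i+k$ by hypothesis. A direct transpose computation, splicing the exact sequence
\[
0\to\tr C_i\to P_2^\ast\to P_3^\ast\to\cdots\to P_i^\ast\to\Ext^i_\Lambda(C_i,\Lambda)\to 0
\]
extracted from a dualized minimal projective resolution of $C_i$ (intermediate cohomologies vanish by $i$-sphericity), gives $\tr C_i\cong\syz^{i-1}\Ext^i_\Lambda(C_i,\Lambda)$ in $\lmod\Lambda^{\mathrm{op}}$. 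Hence
\[
\Ext^j_{\Lambda^{\mathrm{op}}}(\tr C_i,\Lambda)\cong\Ext^{j+i-1}_{\Lambda^{\mathrm{op}}}(\Ext^i_\Lambda(C_i,\Lambda),\Lambda)=0\quad\text{for }1\le j\le k
\]
since the grade hypothesis kills $\Ext^l_{\Lambda^{\mathrm{op}}}(\Ext^i_\Lambda(C_i,\Lambda),\Lambda)$ for every $l\le i+k-1$. Thus each $C_i$ is $k$-torsionfree, and closure under extensions propagates this along the filtration to give $Y=M_0/M_n$ $k$-torsionfree, producing the required $n$-origin extension.

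Finally, assuming $(\ast_{n+k})$, I would obtain $(2)\Longrightarrow(1)$ from the Auslander--Bridger characterization that a $k$-torsionfree module is a $k$-th syzygy: take a dual-exact $n$-origin extension from~(3) and write $Y\cong\syz^k N$ in $\lmod\Lambda$ for some $N$, necessarily with $\pd N\le n+k$. Then $\Ext^i_\Lambda(Y,\Lambda)\cong\Ext^{i+k}_\Lambda(N,\Lambda)$ for $i\ge 1$, and $(\ast_{n+k})$ applied to $N$ yields $\grade\Ext^{i+k}_\Lambda(N,\Lambda)\ge i+k$; dual-exactness of the origin extension transfers this through the identification $\Ext^i_\Lambda(M,\Lambda)\cong\Ext^i_\Lambda(Y,\Lambda)$ to give~(1). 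The main obstacle I anticipate is the transpose identification $\tr C_i\cong\syz^{i-1}\Ext^i_\Lambda(C_i,\Lambda)$ driving $(1)\Longrightarrow(2)$; this requires carefully tracking cohomology in the dualized resolution and exploiting $i$-sphericity to collapse intermediate terms, while the remaining ingredients are a systematic parallel to Theorem~\ref{norgeq}.
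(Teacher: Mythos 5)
Your $(3)\Longrightarrow(2)$ and your argument for $(3)\Longrightarrow(1)$ under $(\ast_{n+k})$ are fine, and the latter agrees with the paper's own argument (write $Y\cong\Omega^k Y'$ with $\pd Y'\le n+k$ and transfer grades through $\Ext^i(M,\Lambda)\cong\Ext^i(Y,\Lambda)\cong\Ext^{i+k}(Y',\Lambda)$). The transpose computation $\tr C_i\cong\syz^{i-1}\Ext^i_\Lambda(C_i,\Lambda)$ for an $i$-spherical module $C_i$ is also correct, and from it the conclusion that each $C_i$ is $k$-torsionfree under the grade hypothesis is valid.

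The gap is your repeated appeal to \emph{``the Auslander--Bridger closure of $k$-torsionfreeness under extensions,''} which you use to go from $k$-torsionfreeness of $P'$ and $Y$ to $k$-torsionfreeness of $W$ in $(2)\Longrightarrow(3)$, and from $k$-torsionfreeness of the $C_i$ to $k$-torsionfreeness of $Y=M_0/M_n$ in $(1)\Longrightarrow(2)$. This principle is false: already $1$-torsionfreeness (torsionlessness) is not closed under extensions, as the paper itself points out in Remark~\ref{excl}(1) when noting that $\mathscr{H}_1(R)$ fails to be closed under extensions over a non-Gorenstein artinian ring; and it fails even for extensions of a torsionless module by a free module. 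What is true is that $k$-torsionfreeness is inherited by the middle term of an extension $0\to A\to B\to C\to 0$ whose $\Lambda$-dual $0\to C^\ast\to B^\ast\to A^\ast\to 0$ is also exact. You do not verify this dual-exactness for the filtration quotients in $(1)\Longrightarrow(2)$, and in $(2)\Longrightarrow(3)$ the situation is worse: a direct computation shows that $W^\ast\to(P')^\ast$ is surjective precisely when every map $X\to\Lambda$ extends along $\iota$, i.e. precisely when the \emph{original} origin extension is already dual-exact. So the dual-exactness needed to salvage the step is exactly the conclusion you are trying to establish, and the argument becomes circular. The paper avoids this entirely by citing \cite[Theorems~3.8 and 4.6]{Omorp} for $(1)\Longrightarrow(2)\Longleftrightarrow(3)$; if you want a self-contained proof, you would need to track dual-exactness carefully through the spherical filtration (using the given surjectivity of $M_{i-1}^\ast\to M_i^\ast$) rather than invoking closure under arbitrary extensions, and for $(2)\Longrightarrow(3)$ you would need a genuinely different construction than simply padding the given sequence by a left $\proj\Lambda$-approximation.
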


\begin{proof}
The implications $(1)\Longrightarrow(2)\Longleftrightarrow(3)$ follow from \cite[Theorems 3.8 and 4.6]{Omorp}.
Assume that $\Lambda$ satisfies the $(\ast_{n+k})$-condition and that condition (3) holds. Then, since $Y$ is $k$-torsionfree, it follows from \cite[Theorem 2.17]{AB} that there exists $Y' \in \mod \Lambda$ such that $Y$ is the $k$-th syzygy of $Y'$.
Since $Y$ has projective dimension at most $n$, $Y'$ has projective dimension at most $n+k$.
In this case, by considering the long exact sequence of Ext, we obtain isomorphisms $\Ext^i(M,\Lambda)\cong\Ext^i(Y,\Lambda)\cong\Ext^{i+k}(Y’,\Lambda)$ for all $1 \le i \le n$.
Therefore, we can conclude that $\grade\Ext^i(M,\Lambda)=\grade\Ext^{i+k}(Y’,\Lambda)\ge i+k$ for all $1 \le i \le n$.

\end{proof}

Finally, we discuss the relationship between the existence of $n$-FPD hulls and higher torsionfreeness.
The following theorem is essentially obtained in \cite[Theorem 2.3]{OExt} and its proof.
However, since that paper assumes commutativity of the ring, we give the precise statement in our general setting.

\begin{thm}\label{nFPDhullex}
Let $M$ be a finitely generated $\Lambda$-module and $n \ge 0$ an integer.
Then the following are equivalent.
\begin{enumerate}[\rm(1)]
   \item 
   The $n$-th syzygy $\syz^n M$ is $(n+1)$-torsionfree.
   \item 
   The module $M$ is an object of $\mathscr{H}_n(\Lambda)$, i.e., $M$ has an $n$-FPD hull.
   \item 
   The module $M$ is an object of $\mathscr{A}_n(\Lambda)$, and there exists an $n$-AB approximation $0\to Z\to W\to M\to 0$ of $M$ such that $W$ is torsionless.
   \item 
   The module $M$ is an object of $\mathscr{E}_n(\Lambda)$, and there exists an $n$-origin extension $0\to X'\to M\oplus P\to Y'\to 0$ of $M$ such that $X'$ is torsionless.
\end{enumerate}
When the equivalent conditions above hold, for any $n$-AB approximation $0\to Z\to W\xrightarrow{f} M\to0$ with $W$ torsionless and any left $\proj\Lambda$-approximation $s:W\to P$ of $W$, the exact sequence
$$
0\to M\to Y\to X\to 0
$$
obtained from the pushout diagram, where $Y=\Cok\binom{f}{s}$ and $X=\Cok s$, is an $n$-FPD hull of $M$.

\end{thm}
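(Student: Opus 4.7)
The plan is to complete the cycle of implications $(1) \Longrightarrow (3) \Longrightarrow (2) \Longrightarrow (4) \Longrightarrow (1)$, with the concluding explicit construction of the $n$-FPD hull extracted directly from the proof of $(3) \Longrightarrow (2)$.

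For $(1) \Longrightarrow (3)$, I would invoke Theorem~\ref{thmnABapp}: since $(n{+}1)$-torsionfreeness of $\syz^n M$ entails $n$-torsionfreeness, the hypothesis already places $M$ in $\mathscr{A}_n(\Lambda)$ together with the canonical $n$-AB approximation $0 \to Y \to \tr\syz^n\tr\syz^n M \oplus P \to M \to 0$. It therefore suffices to check that the middle term is torsionless. The Auslander exact sequence applied to $A := \tr\syz^n\tr\syz^n M$ identifies the kernel of the biduality map with $\Ext^1_{\Lambda^{\mathrm{op}}}(\tr A, \Lambda)$. Since $\tr\tr$ is the identity of $\lmod\Lambda^{\mathrm{op}}$, this group equals $\Ext^1_{\Lambda^{\mathrm{op}}}(\syz^n\tr\syz^n M, \Lambda)$, and iterated dimension shifting along a projective resolution of $\tr\syz^n M$ identifies it with $\Ext^{n+1}_{\Lambda^{\mathrm{op}}}(\tr\syz^n M, \Lambda)$, which vanishes precisely by the assumed $(n{+}1)$-torsionfreeness of $\syz^n M$.

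The implication $(3) \Longrightarrow (2)$ produces the stated construction. Given any $n$-AB approximation $0 \to Z \to W \xrightarrow{f} M \to 0$ with $W$ torsionless and any left $\proj\Lambda$-approximation $s : W \to P$, the torsionlessness of $W$ together with the surjectivity of $s^\ast$ forces $s$ to be injective via the factorization $W \hookrightarrow W^{\ast\ast} \to P$, so $\binom{f}{s}$ is injective and we obtain $0 \to W \to M \oplus P \to Y \to 0$ with $Y = \Cok \binom{f}{s}$. The corresponding pushout diagram, formed exactly as in the $(6) \Longrightarrow (5)$ argument of Theorem~\ref{norgeq}, yields a short exact sequence $0 \to M \to Y \to X \to 0$ with $X = \Cok s$, together with a companion row $0 \to Z \to P \to Y \to 0$ forcing $\pd Y \le n$ (since $\pd Z \le n-1$); meanwhile the surjectivity of $s^\ast$ gives $\Ext^1(X,\Lambda) = 0$, and the dimension shifts $\Ext^i(X,\Lambda) \cong \Ext^{i-1}(W,\Lambda) = 0$ for $2 \le i \le n+1$ (using that $\Ext^j(W,\Lambda) = 0$ for $1 \le j \le n$) complete the verification that $0 \to M \to Y \to X \to 0$ is an $n$-FPD hull.

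For $(2) \Longrightarrow (4)$, given an $n$-FPD hull $0 \to M \to Y \to X \to 0$, I would pull it back along a projective cover $Q \to X$ to obtain $0 \to \syz X \to M \oplus Q \to Y \to 0$; the kernel $\syz X$ is manifestly torsionless, and dimension shifting transfers the vanishing condition on $X$ to the required one on $\syz X$. For $(4) \Longrightarrow (1)$, torsionlessness of $X'$ implies by Auslander--Bridger that $X'$ is stably a first syzygy, whence $\syz^n X'$ is stably an $(n{+}1)$-st syzygy and therefore $(n{+}1)$-torsionfree; the assumption $\pd Y' \le n$ then forces, via iterated horseshoe, $\syz^n X' \cong \syz^n M$ stably, so $\syz^n M$ inherits $(n{+}1)$-torsionfreeness (a property preserved by adding projective summands). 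The chief obstacle I anticipate is the step $(1) \Longrightarrow (3)$: reducing $(n{+}1)$-torsionfreeness of $\syz^n M$ to honest (not merely stable) torsionlessness of the middle term of the canonical approximation requires chaining the Auslander sequence, the identification $\tr\tr \simeq \id$ modulo projectives, and iterated dimension shifts, while the free summand $P$ appearing in Theorem~\ref{thmnABapp} must be tracked carefully so that $W$ is torsionless on the nose.
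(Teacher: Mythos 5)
Your cycle $(1)\Rightarrow(3)\Rightarrow(2)\Rightarrow(4)\Rightarrow(1)$ is a genuinely different route from the paper, which proves $(2)\Rightarrow(4)\Rightarrow(3)\Rightarrow(1)$ and $(1)\Leftrightarrow(2)$ by citing \cite[Theorem~2.3]{OExt}. Your $(1)\Rightarrow(3)$ and $(3)\Rightarrow(2)$ steps are correct and essentially reconstruct the argument the paper outsources; the explicit construction of the hull via the pushout, with $\pd Y\le n$ read off from the companion row $0\to Z\to P\to Y\to 0$ and the $\Ext$-vanishing for $X=\Cok s$ obtained from $s^\ast$ being surjective plus dimension shifting, is right. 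Your $(2)\Rightarrow(4)$ by pulling back along $Q\twoheadrightarrow X$ is also correct. There is, however, a genuine gap in $(4)\Rightarrow(1)$.

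You assert that since $X'$ is torsionless it is stably a first syzygy, ``whence $\syz^n X'$ is stably an $(n{+}1)$-st syzygy and therefore $(n{+}1)$-torsionfree.'' The last step is false: being a $k$-th syzygy does \emph{not} imply $k$-torsionfreeness. The implication in \cite[Theorem~2.17]{AB} goes only one way (torsionfree $\Rightarrow$ syzygy), and its failure in the reverse direction is precisely the content of the grade conditions in Theorem~\ref{thmnABapp}. Concretely, over $\Lambda=k[x,y]/(x^2,xy,y^2)$ one has $\syz^2 k\cong k^{\oplus 4}$, a second syzygy that is not reflexive. Tellingly, your argument never uses the hypothesis $\Ext^i_\Lambda(X',\Lambda)=0$ for $1\le i\le n$, which is where the content lies. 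The step can be repaired as follows: write $X'\oplus Q\cong\syz N$ for some $N\in\mod\Lambda$; then $\Ext^j_\Lambda(N,\Lambda)\cong\Ext^{j-1}_\Lambda(X',\Lambda)=0$ for $2\le j\le n+1$, so $\grade_{\Lambda^{\mathrm{op}}}\Ext^i_\Lambda(N,\Lambda)\ge i-1$ holds trivially for $1\le i\le n$, and Theorem~\ref{thmnABapp} (with $n$ replaced by $n+1$) yields that $\syz^{n+1}N\simeq\syz^n X'$ is $(n{+}1)$-torsionfree; the horseshoe argument then transfers this to $\syz^n M$ exactly as you say. A minor secondary point: to see $s$ is injective you should factor an honest embedding $W\hookrightarrow F$ (with $F$ free, which exists since $W$ is torsionless) through $s$; the map you write as $W\hookrightarrow W^{\ast\ast}\to P$ presupposes a morphism $W^{\ast\ast}\to P$ that is not given, though one could instead embed $W^{\ast\ast}$ into the dual of a free cover of $W^\ast$.
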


\begin{proof}
The implications $(2) \Longrightarrow (4) \Longrightarrow (3)$ follow by taking syzygies of the modules on right-hand side of the sequence.
The implication $(3) \Longrightarrow (1)$ and the construction of the $n$-FPD hull in this case follow from the proof of the implication $(1) \Longrightarrow (2)$ in \cite[Theorem 2.3]{OExt}.
Note that the proof of the equivalence of the conditions (1), (2), and (3) in \cite[Theorem 2.3]{OExt} does not rely on the commutativity of the ring.
The equivalence $(1) \Longleftrightarrow (2)$ in our theorem is precisely the same as the equivalence of conditions (1) and (2) in \cite[Theorem 2.3]{OExt}.
\end{proof}

As a consequence of the theorems above, we obtain the following inclusions among our subcategories. For a subcategory $\mathscr{X}$ of $\mod \Lambda$, we denote by $\syz^n \mathscr{X}$ the subcategory consisting of the $n$-th syzygies of objects in $\mathscr{X}$ for $n\ge0$.

\begin{cor}\label{incl}
Let $n\ge0$ be an integer.
\begin{enumerate}[\rm(1)]
   \item
   We have the following descending chain of subcategories of $\mod\Lambda$.
   $$
   \mathscr{A}_n(\Lambda)\supset \mathscr{C}’_n(\Lambda)\supset \mathscr{H}_n(\Lambda)\supset \syz\mathscr{A}_{n+1}(\Lambda)\supset\syz\mathscr{C}’_{n+1}(\Lambda)\supset \syz\mathscr{H}_{n+1}(\Lambda)\supset\syz^2\mathscr{A}_{n+2}(\Lambda)\supset\cdots.
   $$
   \item
   Suppose that $\Lambda$ satisfies the condition $(\ast_{n-1})$ (e.g., $\Lambda$ is commutative).
   Then we have the following descending chain of subcategories of $\mod\Lambda$.
   $$
\mathscr{A}_0(\Lambda)=\mod\Lambda=\mathscr{A}_1(\Lambda)\supset\mathscr{A}_2(\Lambda)\supset\cdots\supset \mathscr{A}_{n-1}(\Lambda)\supset\mathscr{A}_n(\Lambda).
   $$
   \item
   Suppose that $\Lambda$ satisfies the condition $(\ast_n)$ (e.g. $\Lambda$ is commutative).
   Then we have the following descending chain of subcategories of $\mod\Lambda$.
   $$
   \mathscr{C}’_0(\Lambda)=\mod\Lambda\supset\mathscr{C}’_1(\Lambda)\supset\mathscr{C}’_2(\Lambda)\supset\cdots\supset \mathscr{C}’_{n-1}(\Lambda)\supset\mathscr{C}’_n(\Lambda).
   $$
\end{enumerate}
\end{cor}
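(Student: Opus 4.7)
For part (1), the first two inclusions $\mathscr{A}_n(\Lambda) \supset \mathscr{E}_n(\Lambda) \supset \mathscr{H}_n(\Lambda)$ have already been recorded in Remark~\ref{bydef}(3). The decisive inclusion to verify is $\mathscr{H}_n(\Lambda) \supset \syz\mathscr{A}_{n+1}(\Lambda)$. Given $M \in \mathscr{A}_{n+1}(\Lambda)$, the equivalence $(3) \Leftrightarrow (4)$ in Theorem~\ref{thmnABapp} tells me that $\syz^{n+1}M$ is $(n+1)$-torsionfree; rewriting $\syz^{n+1}M = \syz^{n}(\syz M)$ up to a projective summand (which does not affect torsionfreeness), the equivalence $(1) \Leftrightarrow (2)$ in Theorem~\ref{nFPDhullex} gives $\syz M \in \mathscr{H}_n(\Lambda)$. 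Since $\mathscr{H}_n(\Lambda)$ is closed under direct sums with projectives, the whole subcategory $\syz\mathscr{A}_{n+1}(\Lambda)$ lands in $\mathscr{H}_n(\Lambda)$. The remaining inclusions in the chain are obtained by iteratively applying $\syz$ to the block $\mathscr{A}_m(\Lambda) \supset \mathscr{E}_m(\Lambda) \supset \mathscr{H}_m(\Lambda) \supset \syz\mathscr{A}_{m+1}(\Lambda)$ for $m = n, n+1, \ldots$.

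For part (2), I first observe that $(\ast_{n-1})$ implies $(\ast_k)$ for all $0 \le k \le n-1$: a module of projective dimension at most $k$ also has projective dimension at most $n-1$, so by hypothesis it satisfies $\grade_{\Lambda^{\mathrm{op}}} \Ext^i_\Lambda(-,\Lambda) \ge i$ for $1 \le i \le n-1$, and in particular for $1 \le i \le k$. Under this propagated hypothesis, Theorem~\ref{thmnABapp} supplies the grade characterization
$$
M \in \mathscr{A}_k(\Lambda) \ \Longleftrightarrow\ \grade_{\Lambda^{\mathrm{op}}} \Ext^i_\Lambda(M,\Lambda) \ge i-1 \text{ for all } 1 \le i \le k-1
$$
for every $1 \le k \le n$. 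This condition is manifestly weaker as $k$ decreases, so $\mathscr{A}_n(\Lambda) \subset \mathscr{A}_{n-1}(\Lambda) \subset \cdots \subset \mathscr{A}_1(\Lambda)$. The equalities $\mathscr{A}_0(\Lambda) = \mathscr{A}_1(\Lambda) = \mod\Lambda$ are recorded in Remark~\ref{bydef}(1).

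Part (3) proceeds along identical lines, with Theorem~\ref{norgeq} taking the place of Theorem~\ref{thmnABapp}: under $(\ast_n)$—which descends to $(\ast_k)$ for $0 \le k \le n$ by the same one-line reduction—one obtains the characterization $M \in \mathscr{E}_k(\Lambda) \Longleftrightarrow \grade_{\Lambda^{\mathrm{op}}} \Ext^i_\Lambda(M,\Lambda) \ge i$ for $1 \le i \le k$, whose monotonicity in $k$ furnishes the chain, while $\mathscr{E}_0(\Lambda) = \mod\Lambda$ holds by definition.

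I do not anticipate any genuine obstacle: this corollary is a bookkeeping consequence of the structural theorems proved earlier in the section. The subtlest points are the descent of the $(\ast_*)$ hypothesis (a short observation about the range of grade bounds) and the care with projective-summand ambiguity when iterating $\syz$ in part (1), which reduces to the stability of $\mathscr{H}_n(\Lambda)$ under direct sums with projectives.
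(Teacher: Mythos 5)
Your proof is correct and follows essentially the same route as the paper: part (1) reduces to the inclusion $\mathscr{H}_n(\Lambda) \supset \syz\mathscr{A}_{n+1}(\Lambda)$, obtained by combining the torsionfreeness characterizations from Theorem~\ref{thmnABapp}~$(3)\Leftrightarrow(4)$ and Theorem~\ref{nFPDhullex}~$(1)\Leftrightarrow(2)$, while parts (2) and (3) follow from the grade characterizations of $\mathscr{A}_k(\Lambda)$ and $\mathscr{E}_k(\Lambda)$ supplied by Theorems~\ref{thmnABapp} and~\ref{norgeq}, after observing that $(\ast_{n-1})$ descends to $(\ast_k)$ for all $k\le n-1$. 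Your explicit attention to the projective-summand ambiguity under $\syz$ and to the descent of the $(\ast_*)$-condition spells out two small points that the paper leaves implicit.
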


\begin{proof}
It is enough to prove the statement (1) that the inclusion $\mathscr{H}_n(\Lambda)\supset\syz\mathscr{A}_{n+1}(\Lambda)$ holds true.
But, this is a direct consequence of the equivalences $(3)\Longleftrightarrow(4)$ in Theorem \ref{thmnABapp} and $(1)\Longleftrightarrow(2)$ in Theorem \ref{nFPDhullex}.
The statement (2) follows from the assumption and Theorem~\ref{thmnABapp}.
In fact, in this situation, for any $1 \le k \le n$ and any $M \in \mod \Lambda$, the conditions $M \in \mathscr{A}_k(\Lambda)$ and $\grade\Ext^i(M, \Lambda) \ge i - 1$ for all $1 \le i \le k$ are equivalent.
The statement (3) can be proved similarly.
\end{proof}

At the end of this section, we state some remarks on the category $\mathscr{H}_n(\Lambda)$.
The existence of $n$-FPD hull is closely related to the delooping level of modules introduced by G\'{e}linas \cite{Gel}.

\begin{rmk}\label{dell}
\begin{enumerate}
    \item
    By the above corollary, if $\Lambda$ satisfies the condition $(\ast_n)$, then the inclusions
$\mathscr{A}_n(\Lambda)\supset\mathscr{A}_{n-1}(\Lambda)$ and $\mathscr{E}_n(\Lambda)\supset\mathscr{E}_{n-1}(\Lambda)$ hold.
   The same is not necessarily true for $\mathscr{H}_n(\Lambda)$, even when $\Lambda$ is commutative.
   Indeed, let $R$ be a commutative noetherian local ring with residue field $k$, and assume that $t = \depth R > 0$.
   Then, by \cite[Theorem 3.1]{Gel}, the module $\syz^t k$ is $(t+1)$-torsionfree.
   However, it is easily seen that $\syz^i k$ is not $(i+1)$-torsionfree for any $0 \le i < t$.
   \item 
   G\'{e}linas \cite{Gel} defined the {\em delooping level} of a finitely generated $\Lambda$-module $M$, which is denoted by $\dell_\Lambda M$, as follows.
   $$
\dell_\Lambda M=\inf\{n\ge0 \mid \syz^n M\text{ is a direct summand of }\syz^{n+1}N\text{ in }\lmod\Lambda\text{ for some }N\in\mod\Lambda\}.
$$
He also defined the delooping level $\dell\Lambda$ of the ring $\Lambda$ by $\dell\Lambda=\sup\{\dell_\Lambda S\mid S\text{ is simple}\}$.
He proved that if $\Lambda$ satisfies a certain grade condition, then the (little) finitistic dimension of $\Lambda^{\mathrm{op}}$ coincides with $\dell\Lambda$.
Thus, the delooping level carries important information about the ring and has been actively studied.
The delooping level is related to the existence of $n$-FPD hulls in the following way:
$$
\text{If }M\text{ has }n\text{-FPD hull, then the inequality }\dell_{\Lambda}M\le n\text{ holds}.
$$
This follows from Theorem \ref{nFPDhullex} and \cite[Theorem 1.13]{Gel}.
\end{enumerate}
\end{rmk}

\section{Approximating sequences for Martsinkovsky's $\xi$-invariants}
The $\delta$-invariant for finitely generated modules over a commutative Gorenstein local ring was introduced by Auslander in his unpublished paper \cite{AusM}.
Martsinkovsky \cite{Mart96I} later defined the $\xi$-invariant over general commutative noetherian local rings, so that it coincides with Auslander's $\delta$-invariant when the ring is Gorenstein.
In this section, we study a certain subsequence of the $\xi$-invariant.
To state the definition, we first need some preparation.

Let $\Lambda$ be a two-sided noetherian ring, and let $M$, $N\in\mod\Lambda$.
In this setting, we consider the following sequence of maps induced by taking syzygies:
$$
\lhom_\Lambda(M,N)\xlongrightarrow{\syz}\lhom_\Lambda(\syz M,\syz N)\xlongrightarrow{\syz}\cdots\xlongrightarrow{\syz}\lhom_\Lambda(\syz^n M,\syz^n N)\xlongrightarrow{\syz}\lhom_\Lambda(\syz^{n+1}M,\syz^{n+1}N)\xlongrightarrow{\syz}\cdots
$$
Let $\pi$ be the natural surjection $\Hom_\Lambda(M,N) \twoheadrightarrow \lhom_\Lambda(M,N)$.  
For an integer $n \ge 0$, we define a subgroup $\V_n(M,N)$ of $\Hom_\Lambda(M,N)$ as the kernel of the composition
$$
\Hom_\Lambda(M,N) \xrightarrow{\pi} \lhom_\Lambda(M,N) \xrightarrow{\syz^n} \lhom_\Lambda(\syz^n M, \syz^n N).
$$
That is,
$$
\V_n(M,N) = \{ f \in \Hom_\Lambda(M,N) \mid \underline{\syz^n f} = \underline{0}\text{ in } \lhom_\Lambda(\syz^nM,\syz^nN)\}.
$$
We state here some properties of the subgroup $\V_n(M,N)$, which will be used later.

\begin{lem}\label{Vlem}
Let $M$, $M^\prime$, $N$ and $N^\prime$ be finitely generated $\Lambda$-modules, and let $n\ge0$ be an integer.
Then the following hold.
\begin{enumerate}[\rm(1)]
   \item 
   One has $\V_n(M\oplus M^\prime,N)=\V_n(M,N)\oplus\V_n(M^\prime,N)$ and $\V_n(M,N\oplus N^\prime)=\V_n(M,N)\oplus\V_n(M,N^\prime)$.
   \item 
   Assume that either $M$ or $N$ has projective dimension at most $n$.
   Then one has $\V_n(M,N)=\Hom_\Lambda(M,N)$.
   \item 
   Suppose that there is a surjection $M\twoheadrightarrow N$ of $\Lambda$-modules.
   Then, for any finitely generated $\Lambda$-module $T$, the abelian group $\V_n(N,T)$ can be regarded as a subgroup of the abelian group $\V_n(M,T)$.
\end{enumerate}
\end{lem}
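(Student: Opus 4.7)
The plan is to verify each of the three statements by a direct unpacking of the definition of $\V_n$, relying only on the additivity of $\Hom_\Lambda$ and $\lhom_\Lambda$, the additivity/functoriality of the syzygy on the stable category, and the standard fact that a module of projective dimension at most $n$ is annihilated by the functor $\syz^n$ in $\lmod\Lambda$.

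For part (1), each ingredient of the composition
$$
\Hom_\Lambda(-,-)\xrightarrow{\pi}\lhom_\Lambda(-,-)\xrightarrow{\syz^n}\lhom_\Lambda(\syz^n(-),\syz^n(-))
$$
is additive in both variables, so this composite respects direct sums in either slot. Taking kernels with respect to $M$ then yields $\V_n(M\oplus M',N)=\V_n(M,N)\oplus\V_n(M',N)$, and similarly in the second variable. For part (2), if $\pd_\Lambda M\le n$ then $\syz^n M$ is projective, i.e.\ the zero object of $\lmod\Lambda$; hence every induced map $\underline{\syz^n f}$ automatically vanishes, so $\V_n(M,N)=\Hom_\Lambda(M,N)$. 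If instead $\pd_\Lambda N\le n$, then $\syz^n N$ is projective, so every $\Lambda$-homomorphism with target $\syz^n N$ factors through a projective and therefore becomes zero in the stable category, giving the same conclusion.

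For part (3), let $p:M\twoheadrightarrow N$ be the given surjection and consider the precomposition homomorphism $p^\ast:\Hom_\Lambda(N,T)\to\Hom_\Lambda(M,T)$. This is injective: if $f\circ p=0$ then $f$ vanishes on $\image p=N$, so $f=0$. To see that $p^\ast$ sends $\V_n(N,T)$ into $\V_n(M,T)$, I would use that $\syz^n$ is an additive functor on $\lmod\Lambda$, so that
$$
\underline{\syz^n(f\circ p)}=\underline{\syz^n f}\circ\underline{\syz^n p}
$$
in $\lhom_\Lambda(\syz^n M,\syz^n T)$; if $\underline{\syz^n f}=\underline{0}$ then this composite is $\underline{0}$ as well. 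Combining injectivity with this inclusion gives the desired realization of $\V_n(N,T)$ as a subgroup of $\V_n(M,T)$.

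The only mild obstacle is to pin down precisely the embedding claimed in~(3) (namely that the subgroup structure is via the precomposition map $p^\ast$, for which one has to verify both injectivity and that the image lies in $\V_n(M,T)$) and to record cleanly that $\syz^n$ preserves composition on the stable category, which is simply its functoriality. No deeper structural input is required for any part.
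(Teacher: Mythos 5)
Your proof is correct and follows essentially the same route as the paper's: additivity of the composite $\Hom\to\lhom\to\lhom(\syz^n-,\syz^n-)$ for part (1), vanishing of $\lhom(\syz^n M,\syz^n N)$ when one argument has projective dimension at most $n$ for part (2), and functoriality of $\syz^n$ together with injectivity of precomposition by a surjection for part (3). The paper phrases (3) via a commutative square and (2) in a single case, but the underlying reasoning is identical.
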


\begin{proof}
\begin{enumerate}[\rm(1)]
   \item 
   The claim follows from the additivity of the natural composition map $$
   \Hom(M,N) \to \lhom(M,N) \to \lhom(\syz^n M, \syz^n N) $$ with respect to $M$ and $N$.
   \item
   Since we have $\lhom(\syz^n M, \syz^n N) = 0$ by assumption, the claim follows immediately from the definition.
   \item
   The desired conclusion follows from the following commutative diagram:
\end{enumerate}
   $$
   \xymatrix@R-1pc@C-1pc{
   \Hom(N,T) \ar@{^{(}->}[rr]\ar[dd]^{\text{nat.}} && \Hom(M,T) \ar[dd]^{\text{nat}} \\
   &&&\\
   \lhom(\syz^nN,\syz^nT) \ar[rr] && \lhom(\syz^nM,\syz^nT).
   }
   $$
\end{proof}

We define a subgroup $\V(M,N)$ of $\Hom_\Lambda(M,N)$ as the union $\bigcup_{n \ge 0} \V_n(M,N)$, and obtain an ascending chain of subgroups:
\begin{equation}\label{eq:ascend}
\V_0(M,N) \subset \V_1(M,N) \subset \cdots \subset \V_n(M,N) \subset \V_{n+1}(M,N) \subset \cdots \subset \V(M,N).
\end{equation}
Note that the ascending chain \eqref{eq:ascend} stabilizes under suitable assumptions.  
For example, if $R$ is a commutative noetherian ring and $\Lambda$ is a noetherian $R$-algebra, then the acceding chain \eqref{eq:ascend} is a sequence of $R$-submodules of the finitely generated $R$-module $\Hom_\Lambda(M,N)$.
Hence, there exists an integer $m=m(M,N) \ge 0$ such that $\V_\ell(M,N) = \V_m(M,N)$ for all integers $\ell \ge m$.  
In particular, we have $\V(M,N) = \V_m(M,N)$.

Alternatively, when a $\Lambda$-module $M$ is Ext-orthogonal to the ring $\Lambda$, the following result on the stabilization of $\V_*(M, -)$ holds.

\begin{lem}\label{VExtortho}
Let $a$ and $b$ be integers with $0 < a \le b$.  
Suppose that $M \in \mod \Lambda$ satisfies $\Ext_\Lambda^i(M,\Lambda) = 0$ for all $i$ with $a\le i\le b$.
Then, for any $N \in \mod \Lambda$, the following equalities hold:
$$
\V_{a-1}(M, N) = \V_a(M, N) = \cdots = \V_{b-1}(M, N) = \V_b(M, N).
$$
\end{lem}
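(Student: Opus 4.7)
The plan is to reduce the statement to a single injectivity property of the syzygy functor on the stable category. Since $\V_k(M, N) \subset \V_{k+1}(M, N)$ holds automatically for every $k \ge 0$, and these two groups are precisely the preimages under the canonical map $\alpha_k : \Hom_\Lambda(M, N) \to \lhom_\Lambda(\syz^k M, \syz^k N)$ of $0$ and of the kernel of $\syz : \lhom_\Lambda(\syz^k M, \syz^k N) \to \lhom_\Lambda(\syz^{k+1} M, \syz^{k+1} N)$ respectively, it will suffice to prove that this syzygy map is injective for each $k$ with $a-1 \le k \le b-1$.

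The key lemma I will establish is the following: if an object $X \in \mod\Lambda$ satisfies $\Ext^1_\Lambda(X, \Lambda) = 0$, then for every $Y \in \mod\Lambda$ the map $\syz : \lhom_\Lambda(X, Y) \to \lhom_\Lambda(\syz X, \syz Y)$ is injective. To prove this, fix projective presentations $0 \to \syz X \xrightarrow{i} P \xrightarrow{\pi} X \to 0$ and $0 \to \syz Y \xrightarrow{j} Q \xrightarrow{\pi'} Y \to 0$, and lift a given $g : X \to Y$ to a chain map with middle component $g_0 : P \to Q$ and left component $\syz g : \syz X \to \syz Y$. Assuming $\underline{\syz g} = 0$, write $\syz g = v \circ u$ for some projective $R$ and homomorphisms $u : \syz X \to R$, $v : R \to \syz Y$. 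Because $R$ is a summand of a free module, the hypothesis $\Ext^1_\Lambda(X, \Lambda) = 0$ yields $\Ext^1_\Lambda(X, R) = 0$, so $u$ extends to $\tilde u : P \to R$ with $\tilde u \circ i = u$. A direct computation then gives $(g_0 - j v \tilde u) \circ i = 0$, so $g_0 - j v \tilde u$ factors through $\pi$ as $\ell \circ \pi$ for some $\ell : X \to Q$, and composing with $\pi'$ yields $g = \pi' \circ \ell$. Thus $g$ factors through the projective $Q$, so $\underline{g} = 0$.

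To finish, apply the lemma with $X = \syz^k M$ for each $k$ with $a-1 \le k \le b-1$: since $\Ext^1_\Lambda(\syz^k M, \Lambda) \cong \Ext^{k+1}_\Lambda(M, \Lambda)$, and this Ext vanishes by hypothesis for $a \le k+1 \le b$, the required injectivity of $\syz$ on $\lhom_\Lambda(\syz^k M, \syz^k N)$ holds in this range. The chain of equalities $\V_{a-1}(M, N) = \V_a(M, N) = \cdots = \V_b(M, N)$ then follows. The only delicate point is the extension of $u$ over $P$, which is where the $\Ext^1_\Lambda(X, \Lambda) = 0$ hypothesis enters; the remainder is routine diagram chasing.
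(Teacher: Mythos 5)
Your proof is correct, and it takes a genuinely different route from the paper. The paper invokes the Auslander--Bridger adjunction $(\tr\syz^n\tr,\,\syz^n)$: it observes that $\syz^{a-1}M$ satisfies $\Ext^i(\syz^{a-1}M,\Lambda)=0$ for $1\le i\le b-a+1$, so the counit $\underline{\psi^{b-a+1}_{\syz^{a-1}M}}$ is a stable isomorphism, and then uses the adjunction isomorphism $\theta$ to identify the map $\syz^{b-a+1}\colon\lhom(\syz^{a-1}M,\syz^{a-1}N)\to\lhom(\syz^{b}M,\syz^{b}N)$ with a composite of isomorphisms, yielding $\V_{a-1}(M,N)=\V_b(M,N)$ directly. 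You instead prove the more local statement that $\Ext^1_\Lambda(X,\Lambda)=0$ forces $\syz\colon\lhom(X,Y)\to\lhom(\syz X,\syz Y)$ to be injective, via an explicit lifting argument (extending $u:\syz X\to R$ over $P$ using $\Ext^1(X,R)=0$, then showing the corrected lift $g_0-jv\tilde u$ descends to a factorization of $g$ through $Q$). Applying this with $X=\syz^kM$ for $a-1\le k\le b-1$ and using $\Ext^1(\syz^kM,\Lambda)\cong\Ext^{k+1}(M,\Lambda)$ gives all the one-step equalities. Your argument is self-contained and more elementary -- it does not need the $(\tr\syz^n\tr,\syz^n)$ adjunction or the transpose functor at all -- though it proves only the injectivity needed for the lemma, whereas the paper's adjunction argument actually shows the syzygy map in this range is a stable isomorphism. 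Both are valid; yours exhibits more transparently exactly where the vanishing of each $\Ext^{k+1}(M,\Lambda)$ enters.
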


\begin{proof}
The syzygy module $\syz^{a-1}M$ satisfies $\Ext^i_\Lambda(\syz^{a-1}M, \Lambda) = 0$ for all $1 \le i \le b - a + 1$.  
Hence, by the dual statement of \cite[Theorem 2.17]{AB}, the counit morphism $\underline{\psi^{b-a+1}_{\syz^{a-1}M}} : \tr\syz^{b-a+1}\tr\syz^b M \to \syz^{a-1}M$ is an isomorphism in $\lmod\Lambda$.  
Therefore, the claim follows from the following commutative diagram.
$$
\xymatrix@R-1pc@C-1pc{
\lhom(\syz^{a-1}M,\syz^{a-1}N) \ar[rrrrrrr]^-{\syz^{b-a+1}} \ar[dd]_-{\lhom(\underline{\psi^{b-a+1}_{\syz^{a-1}M}},\syz^nN)}^-{\cong} &&&&&&& 
\lhom(\syz^bM,\syz^bN) \ar@{=}[dd] \\
\\
\lhom(\tr\syz^{b-a+1}\tr\syz^bM,\syz^{a-1}N) \ar[rrrrrrr]_-{\theta^{b-a+1}_{\syz^bM,\syz^{a-1}N}}^-{\cong} &&&&&&& 
\lhom(\syz^bM,\syz^bN)
}
$$
\end{proof}

In the remainder of this section, we further assume that $R$ is a commutative noetherian local ring with unique maximal ideal $\m$ and residue field $k$, and consider only the case $\Lambda = R$.  
Under the above notation, we denote $\V_n(M,k)$ by $\V_n(M)$ and $\V(M,k)$ by $\V(M)$.  
We are now ready to define the $\xi$-invariant of a finitely generated $R$-module $M$.

\begin{dfn}\cite{Mart96I, Mart96rem}\label{xidef}
For a finitely generated $R$-module $M$, we define $\xi_R(M) = \dim_k \V(M)$ and call it the {\em Martsinkovsky's $\xi$-invariant} of $M$.  
That is, $\xi_R(M)$ is nothing but the dimension of the $k$-vector subspace
$$
\V(M) = \{ f \in \Hom_R(M,k) \mid \underline{\syz^m f} = \underline{0} \text{ in } \lhom_R(\syz^m M, \syz^m k) \text{ for some } m \ge 0 \}
$$
of $\Hom_R(M,k)$.
\end{dfn}

\begin{rmk}
The above definition may look slightly different from Martsinkovsky's original one.  
In \cite{Mart96I}, for each integer $i \ge 0$, $\xi_R^i(M)$ is defined as the dimension of the kernel of the natural homomorphism $\Ext^i_R(M,k) \to \widecheck{\Ext}^i_R(M,k)$ as a $k$-vector space, and $\xi_R^0(M)$ is denoted by $\xi_R(M)$.  
Here, $\widecheck{\Ext}^i_R(M,k)$ denotes the {\em $i$-th Tate--Vogel cohomology module}.
When $\syz^i M$ is defined to be the $i$th syzygy of $M$ arising from the minimal free resolution, it is shown that $\xi_R^i(M) = \xi_R(\syz^i M)$.
In this paper, to avoid confusion between $\xi_R^i(M)$ and $\xi_R(i, M)$ (to be defined below), we do not use the former notation.
The invariant $\xi_R(M) = \xi_R^0(M)$ defined in \cite{Mart96I} agrees with the one given in Definition \ref{xidef}.
In \cite{Mart96rem}, the subspace $\V(M)$ of $\Hom_R(M,k)$ is defined as follows:  
Let $P^\bullet_M$ and $P^\bullet_k$ be minimal free resolutions of $M$ and $k$, respectively.
Then $\V(M)$ is defined as the set of all $R$-homomorphisms $f : M \to k$ satisfying that there exists a complex homomorphism $\tilde{f}^\bullet : P^\bullet_M \to P^\bullet_k$ with $\tilde{f}^m = 0$ for all $m \gg 0$ and $H^0(\tilde{f}^\bullet) = f$.
It is easy to see that the subspace $\V(M)$ given in this way also agrees with the one described above.
\end{rmk}

Computing the $\xi$-invariant in practice is quite difficult, since it requires considering arbitrarily high syzygies of morphisms.
Thus, we consider an approximating sequence for the $\xi$-invariant.

\begin{dfn}
Let $M$ be a finitely generated $R$-module and $n\ge0$ an integer.
We define $\xi_R(n, M)= \dim_k \V_n(M)$ and call it the {\em $n$-th approximated $\xi$-invariant} of $M$.  
That is, $\xi_R(n, M)$ is the dimension of the $k$-vector subspace of $\Hom_R(M,k)$ consisting of homomorphisms $f \in \Hom_R(M,k)$ satisfying $\underline{\syz^n f} = \underline{0}$.
\end{dfn}

In the introduction, $\xi_R(n,M)$ is written as $\xi_n(M)$.
As is immediate from the definition, we obtain a non-decreasing sequence of integers
$$
\xi_R(0, M) \le \xi_R(1, M) \le \ldots \le \xi_R(n, M) \le \xi_R(n+1, M) \le \ldots \le \xi_R(M)
$$
and the equality $\lim_{n \to \infty} \xi_R(n, M) = \xi_R(M)$.  
Moreover, the inequality $\xi_R(M) \le \mu(M)$ clearly holds, where $\mu(M)$ is the minimal number of generators of $M$.  
Note that the equalities $\mu_R(M) = \dim_k(M \otimes_R k) = \dim_k \Hom_R(M, k)$ hold.

The following proposition is an $n$-th approximated version of certain properties that play an important role in computing the $\xi$-invariant (or Auslander's $\delta$-invariant).
See also \cite[Proposition 2.4]{Mart96I} and \cite[Lemma 5]{Mart96rem}.

\begin{prop}\label{immlem}
Let $M$, $N$ be finitely generated $R$-modules and $n\ge0$ an integer.
Then the following hold.
\begin{enumerate}[\rm(1)]
   \setcounter{enumi}{-1}
   \item
   The $0$-th approximated $\xi$-invariant $\xi_R(0,M)$ of $M$ is the rank of the largest free summand of $M$.
   \item
   One has $\xi_R(n,M\oplus N)=\xi_R(n,M)+\xi_R(n,N)$.
   \item 
   If $M$ has projective dimension at most $n$, then $\xi_R(n,M)=\mu_R(M)$.
   \item 
   If there exists a surjection $M\twoheadrightarrow N$ of $R$-modules, then $\xi_R(n,M)\ge\xi_R(n,N)$.
\end{enumerate}
\end{prop}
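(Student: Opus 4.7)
The plan is to deduce (1), (2), and (3) as immediate consequences of Lemma \ref{Vlem}, and then use (1) and (2) to reduce (0) to showing that $\xi_R(0,M')=0$ for any module $M'$ without a nonzero free summand.

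For (1), apply Lemma \ref{Vlem}(1) with the second argument equal to $k$ to obtain $\V_n(M\oplus N,k)=\V_n(M,k)\oplus\V_n(N,k)$, and take $k$-dimensions. For (2), the hypothesis $\pd_R M\le n$ together with Lemma \ref{Vlem}(2) gives $\V_n(M,k)=\Hom_R(M,k)$, whose $k$-dimension is $\mu_R(M)$. For (3), Lemma \ref{Vlem}(3) with $T=k$ realizes $\V_n(N,k)$ as a subgroup, hence a $k$-subspace, of $\V_n(M,k)$, so $\xi_R(n,N)\le\xi_R(n,M)$.

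For (0), decompose $M=R^r\oplus M'$, where $r$ is the rank of the maximal free summand of $M$, so that $M'$ has no nonzero free summand. Parts (1) and (2) (with $n=0$) yield $\xi_R(0,M)=\xi_R(0,R^r)+\xi_R(0,M')=r+\xi_R(0,M')$, so it suffices to prove $\xi_R(0,M')=0$. By definition, $\V_0(M',k)$ equals the space $\mathscr{P}(M',k)$ of $R$-homomorphisms $M'\to k$ factoring through some projective. Since any $g\colon P\to k$ with $P$ projective lifts through the canonical surjection $\pi\colon R\twoheadrightarrow k$ to a homomorphism $\tilde g\colon P\to R$, every $f\in\mathscr{P}(M',k)$ can be written as $f=\pi\circ h$ for some $h\in\Hom_R(M',R)$.

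The crux is to show that $h(M')\subseteq\m$ for every $h\in\Hom_R(M',R)$. If instead some $h(m)$ were a unit, the map $R\to M'$ sending $1\mapsto m$ would be split by $h(m)^{-1}\cdot h\colon M'\to R$, exhibiting $R$ as a direct summand of $M'$ and contradicting the choice of $M'$. Therefore $\pi\circ h=0$ for all such $h$, so $\V_0(M',k)=0$ and $\xi_R(0,M)=r$. The main obstacle is this free-summand extraction in (0); the remaining parts follow formally from Lemma \ref{Vlem}.
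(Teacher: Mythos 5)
Your proof is correct and follows essentially the same route as the paper: parts (1)--(3) are read off from Lemma~\ref{Vlem}, and part (0) is reduced via additivity to showing $\xi_R(0,M')=0$ for a module $M'$ with no free summand, using the observation that a unit in the image of some $h\colon M'\to R$ would split off a free summand. The only cosmetic difference is that you lift $g\colon P\to k$ through $\pi\colon R\twoheadrightarrow k$ to factor $f$ through a single copy of $R$, whereas the paper works coordinatewise on a factorization through $R^{\oplus b}$; both hinge on the same splitting argument.
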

\begin{proof}
The statements (1) through (3) follow from parts (1) through (3) of Lemma~\ref{Vlem}, respectively.
Thus, it suffices to prove the statement (0).  
Suppose that $M = X \oplus R^{\oplus a}$, where $X$ is a stable module and $a \ge 0$.
Then, by assertions (1) and (2), we obtain the equalities $\xi(0, M) = \xi(0, X) + \xi(0, R^{\oplus a}) = \xi(0, X) + a$.
Hence, it suffices to show that $\xi(0, X) = 0$.
Assume that a nonzero $R$-homomorphism $f : X \to k$ factors through a free module; that is, there exist $b > 0$ and maps $h : X \to R^{\oplus b}$ and $g : R^{\oplus b} \to k$ such that $f = g \circ h$.
Let $g = (z_1, z_2, \ldots, z_b) : R^{\oplus b} \to k$ with $z_i \in R$, and write $h = (h_1, h_2, \ldots, h_b)^{\mathsf{T}} : X \to R^{\oplus b}$.  
Since $f$ is nonzero, there exists an element $x \in X$ such that
$$
\overline{z_1} h_1(x) + \overline{z_2} h_2(x) + \cdots + \overline{z_b} h_b(x) = \overline{1} \in k.
$$
It follows that there exists an integer $1 \le i \le b$ such that $\overline{z_i} h_i(x) \ne 0$ in $k$, and hence both $z_i$ and $h_i(x)$ are units in $R$.  
In particular, $h_i : X \to R$ is a split surjection, which contradicts the assumption that $X$ is stable.  
Therefore,
$$
\xi(0, X) = \dim_k\Ker\left(\Hom(X,k) \twoheadrightarrow \lhom(X,k)\right) = 0.
$$
\end{proof}

To describe the relationship among the $n$-AB approximations, the $n$-FPD hulls (studied in the previous section), and the $n$th approximated $\xi$-invariant, we recall the notion of minimality for such approximations and hulls.
This will also be needed to present the original definition of Auslander's $\delta$-invariant.

\begin{dfn}\label{mindef}
Let $M$ be a finitely generated $R$-module and $n\ge0$ an integer.
   \begin{enumerate}[\rm(1)]
   \item
   Suppose that $M$ admits an $n$-AB approximation 
   $0 \to Y \xrightarrow{i} X \xrightarrow{p} M \to 0$. 
   This approximation is said to be {\em minimal} when $X$ and $Y$ have no nonzero direct summand in common through $i$.  
   More precisely, this means that for any direct sum decomposition $X = X_0 \oplus X_1$ with $X_0 \subset \image i$, we have $X_0 = 0$.
   \item 
   Suppose that $M$ admits an $n$-FPD hull
   $0 \to M \xrightarrow{j} Y^\prime \xrightarrow{q} X^\prime \to 0$.
   This hull is said to be {\em minimal} when $X^\prime$ and $Y^\prime$ have no nonzero direct summand in common through $q$.  
   More precisely, this means that for any direct sum decomposition $X = X_0 \oplus X_1$ with $q^{-1}(X_0)\cap\image j=(0)$, we have $X_0 = 0$.
   \end{enumerate}
\end{dfn}

Recall that, for a Gorenstein local ring of Krull dimension $d$, a $d$-AB approximation is called a {\em maximal Cohen--Macaulay approximation} (or simply, an {\em MCM approximation}).  
Similarly, a $d$-FPD hull (resp. a $d$-origin extension) is simply called an {\em FPD hull} (resp, an {\em origin extension}).

The minimality of MCM approximations is discussed in various papers; see \cite{AusM, HS, LW, Sim, Yoshida, Yos93} for instance.
It is easy to see that right minimality (resp. left minimality) implies minimality.
Over a complete Gorenstein local ring, a proof of the converse is given in \cite[Lemma 2.2]{Yos93}.
A proof in the non-complete case is given in \cite[Lemma 2.3]{HS}.
We also note that an interesting theory on the minimality of origin extensions is developed in \cite{Kview}.
Using arguments similar to those in \cite{HS, Yos93}, one can prove analogous results for $n$-AB approximations and $n$-FPD hulls over local rings which are not necessarily Gorenstein.  
We also note that a proof avoiding the assumption of completeness was given by Simon in \cite{Sim}.

\begin{thm}\cite[Theorems 3.1 and 3.3]{Sim}\label{minimality}
Let $R$ be a local ring, $M$ a finitely generated $R$-module and $n\ge0$ an integer.
\begin{enumerate}[\rm(1)]
   \item 
   Suppose that $M$ admits an $n$-AB approximation $0 \to Y \xrightarrow{i} X \xrightarrow{p} M \to 0$.
   Then this approximation is minimal if and only if the homomorphism $p$ is right minimal.
   \item 
   Suppose that $M$ admits an $n$-FPD hull $0 \to M \xrightarrow{j} Y^\prime \xrightarrow{q} X^\prime \to 0$.
   Then this hull is minimal if and only if the homomorphism $j$ is left minimal.
\end{enumerate}
\end{thm}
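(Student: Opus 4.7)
The plan is to prove (1) in detail and then obtain (2) by formal duality, interchanging the roles of kernels and cokernels throughout, replacing $X$ and $p$ by $X'$ and $j$, and using the left $\proj R$-approximation property in place of the right $\mathscr{T}_n(R)$-approximation property.

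For the ``right minimal $\Rightarrow$ minimal'' direction of (1), suppose $p$ is right minimal and take a decomposition $X = X_0 \oplus X_1$ with $X_0 \subset \image i = \Ker p$. The idempotent $e \in \operatorname{End}_R(X)$ that projects onto $X_1$ along $X_0$ then satisfies $p \circ e = p$, so right minimality forces $e$ to be an automorphism; hence $X_0 = \Ker e = 0$.

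For the reverse direction, assume the approximation is minimal and let $\phi \in \operatorname{End}_R(X)$ satisfy $p \circ \phi = p$. Since $\phi - \id_X$ lands in $\Ker p = \image i$, there is $h : X \to Y$ with $\phi = \id_X + i \circ h$, and $\phi$ restricts along $i$ to an endomorphism $\phi_Y = \id_Y + h \circ i$ of $Y$. Applying the snake lemma to the induced self-morphism of $0 \to Y \to X \to M \to 0$, which is the identity on $M$, yields isomorphisms $\Ker \phi_Y \cong \Ker \phi$ and $\Cok \phi_Y \cong \Cok \phi$, so $\phi$ is an automorphism if and only if $\phi_Y$ is. I then aim to show, using the minimality of the approximation together with Nakayama's lemma over the local ring $R$, that $h \circ i$ lies in the Jacobson radical of $\operatorname{End}_R(Y)$, whence $\phi_Y = \id_Y + h \circ i$ is an automorphism.

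The main obstacle is this last claim about $h \circ i$. The strategy is contrapositive: if $\phi_Y$ (equivalently $\phi$) failed to be an automorphism, one would construct a nontrivial direct summand of $X$ contained in $\image i$, contradicting minimality. Over a complete local ring such a summand is produced by Fitting-type decompositions in $\operatorname{End}_R(X)$ via Krull--Schmidt, as in the classical arguments of Yoshino. In the non-complete setting handled by Simon \cite{Sim}, one instead works with a projective cover of $X$ and exploits the Ext-vanishing $\Ext^1_R(X, Y) = 0$, which follows by dimension shift from $\Ext^i_R(X, R) = 0$ for $1 \le i \le n$ and $\pd_R Y \le n - 1$, to lift the required idempotents from a suitable semi-perfect quotient of $\operatorname{End}_R(X)$. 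The dual version of this argument, applied to the $n$-FPD hull $0 \to M \to Y' \to X'\to 0$ with $j$ a left $\proj R$-approximation, then yields (2).
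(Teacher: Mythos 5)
The paper does not actually supply a proof of Theorem~\ref{minimality}: it is stated as a citation of Simon \cite[Theorems 3.1 and 3.3]{Sim}, with the surrounding discussion noting that the complete-local-ring case goes back to Yoshino \cite{Yos93} and Hashimoto--Shida \cite{HS}, and that Simon removed the completeness hypothesis. So there is no in-paper argument to compare against; I can only assess your proposal on its own terms.

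Your argument for ``right minimal $\Rightarrow$ minimal'' in part (1) is correct and complete: given $X = X_0 \oplus X_1$ with $X_0 \subset \Ker p$, the projection $e$ onto $X_1$ satisfies $p e = p$, right minimality forces $e$ to be an automorphism, hence $X_0 = 0$. The dualisation to (2) is routine. The snake-lemma reduction in the converse direction is also fine: writing $\phi = \mathrm{id}_X + i h$ and $\phi_Y = \mathrm{id}_Y + h i$, you correctly get $\Ker\phi \cong \Ker\phi_Y$ and $\Cok\phi \cong \Cok\phi_Y$, so $\phi$ is an automorphism iff $\phi_Y$ is.

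The gap is exactly where you flag it. The claim that $h \circ i$ lies in the Jacobson radical of $\operatorname{End}_R(Y)$ is the entire content of the nontrivial implication, and you have not proved it; you have only named two possible routes (Fitting/Krull--Schmidt over a complete ring; idempotent lifting over a non-complete one) without carrying either out. Worse, the second route as phrased is problematic: over a non-complete local ring, $\operatorname{End}_R(X)$ is generally \emph{not} semiperfect and Krull--Schmidt fails, which is precisely the phenomenon Simon's paper was written to circumvent. Appealing to ``a suitable semi-perfect quotient'' plus the vanishing $\Ext^1_R(X,Y)=0$ does not by itself produce the idempotent you need; $\Ext^1_R(X,Y)=0$ gives the lifting property of the approximation (it is what makes $p$ a right $\mathscr{T}_n(R)$-approximation) but does not control $\operatorname{End}_R(X)$. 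The standard arguments instead proceed either by a Fitting-lemma decomposition $X = \Ker\phi^m \oplus \operatorname{Im}\phi^m$, which requires controlling the descending chain $\operatorname{Im}\phi \supset \operatorname{Im}\phi^2 \supset \cdots$ and hence needs completion or finite length, or by Simon's descent argument from the completion. As written, your proof is an outline that reduces the theorem to the same hard step that the paper also delegates to \cite{Sim}, and the specific mechanism you propose for that step is not substantiated and is not obviously correct over a non-complete base.
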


We obtain an existence and uniqueness theorem for minimal $n$-AB approximations and $n$-FPD hulls.

\begin{cor}\label{minapphull}
Let $R$ be a local ring and $n\ge0$ an integer.
\begin{enumerate}[\rm(1)]
   \item
   Let $M$ be a finitely generated $R$-module belonging to $\mathscr{A}_n(R)$.  
   Then $M$ admits a minimal $n$-AB approximation
   $$
   0 \to Y_M \xrightarrow{i_M} X_M \xrightarrow{p_M} M \to 0,
   $$
   which is unique up to isomorphism of exact sequences inducing the identity on $M$.
   \item
   Let $M$ be a finitely generated $R$-module belonging to $\mathscr{H}_n(R)$.  
   Then $M$ admits a minimal $n$-FPD hull
   $$
   0 \to M \xrightarrow{j^M} Y^M \xrightarrow{q^M} X^M \to 0,
   $$
   which is unique up to isomorphism of exact sequences inducing the identity on $M$.
\end{enumerate}
\end{cor}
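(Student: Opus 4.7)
My plan is to prove each part in two steps: first produce a minimal approximation (resp.\ hull) by stripping away redundant direct summands from an arbitrary one, and then establish uniqueness via a standard Auslander--Smal\o\ type factorization argument. The key reduction tool is Theorem~\ref{minimality}, which translates the summand-based minimality of Definition~\ref{mindef} into right (resp.\ left) minimality of the approximating morphism.

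For existence in (1), take any $n$-AB approximation $0\to Y\to X\xrightarrow{p}M\to 0$. Since $R$ is a commutative noetherian local ring, $\mod R$ is Krull--Schmidt, so a standard Fitting-type argument applied to $p$ yields a decomposition $X=X_M\oplus X'$ with $p|_{X'}=0$ and $p_M:=p|_{X_M}$ right minimal. The class of modules $N$ with $\Ext^i_R(N,R)=0$ for $1\le i\le n$, and the class of modules of projective dimension at most $n-1$, are both closed under direct summands; hence $X_M$ still lies in the former, and the identification $\ker p=X'\oplus\ker p_M$ forces $Y_M:=\ker p_M$ to have projective dimension at most $n-1$. The sequence $0\to Y_M\to X_M\xrightarrow{p_M}M\to 0$ is therefore an $n$-AB approximation, and Theorem~\ref{minimality}(1) promotes right minimality of $p_M$ to the minimality demanded in Definition~\ref{mindef}(1). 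For uniqueness, given two minimal $n$-AB approximations with surjections $p_M$ and $p'_M$, Remark~\ref{bydef}(2) tells us that each is a right $\mathscr{T}_n(R)$-approximation, so there exist $\alpha:X_M\to X'_M$ and $\beta:X'_M\to X_M$ with $p'_M\alpha=p_M$ and $p_M\beta=p'_M$. Then $p_M(\beta\alpha)=p_M$, so right minimality forces $\beta\alpha$ and symmetrically $\alpha\beta$ to be automorphisms; thus $\alpha$ is an isomorphism restricting to an isomorphism of kernels, giving the desired isomorphism of exact sequences inducing $1_M$.

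Part (2) is proved by the dual argument: apply the dual Fitting extraction to the injection $j:M\to Y$ in an arbitrary $n$-FPD hull, obtaining $Y=Y^M\oplus Y''$ with $j$ factoring through $Y^M$ as a left minimal $j^M:M\to Y^M$; closure of $\mathscr{P}_n(R)$ and of the class $\{N\mid\Ext^i_R(N,R)=0,\ 1\le i\le n+1\}$ under summands preserves the hull conditions for the induced sequence $0\to M\to Y^M\to X^M\to 0$, and Theorem~\ref{minimality}(2) upgrades left minimality of $j^M$ to minimality. Uniqueness follows by the same factorization argument dualized, using Remark~\ref{bydef}(2) to ensure both hulls are left $\mathscr{P}_n(R)$-approximations. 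The one genuinely technical step is the Krull--Schmidt/Fitting extraction of the right (resp.\ left) minimal part from an arbitrary morphism, but this is classical for finitely generated modules over noetherian local rings and requires no essentially new input.
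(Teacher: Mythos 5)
Your overall strategy matches the paper's: obtain existence by stripping a redundant summand from an arbitrary approximation, then get uniqueness by a standard Auslander--Smal\o\ factorization argument combining the lifting property of Remark~\ref{bydef}(2) with right (resp.\ left) minimality from Theorem~\ref{minimality}. The uniqueness half of your argument is correct and is essentially what the paper does.

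However, the existence argument has a genuine gap. You assert that ``since $R$ is a commutative noetherian local ring, $\mod R$ is Krull--Schmidt,'' and from this you extract a right-minimal summand of $p$ (resp.\ a left-minimal summand of $j$). That assertion is false in general: Krull--Schmidt for $\mod R$ requires semiperfect endomorphism rings, which holds when $R$ is complete or henselian but fails for arbitrary commutative noetherian local rings (this is exactly why the paper, and the references \cite{HS, Sim} surrounding Theorem~\ref{minimality}, take pains to treat the non-complete case separately). Consequently the ``standard Fitting-type extraction'' of a right-minimal summand from an arbitrary morphism is not available here, and your existence argument does not go through as written.

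The gap is easily repaired, and the paper's proof avoids it by working directly with the summand-based minimality of Definition~\ref{mindef}. Given an $n$-AB approximation $0 \to Y \xrightarrow{i} X \xrightarrow{p} M \to 0$, if there is a nonzero direct summand $X_0$ of $X = X_0 \oplus X_1$ with $X_0 \subset \operatorname{Im} i$, then $\operatorname{Im} i = X_0 \oplus (X_1 \cap \operatorname{Im} i)$, so $X_1 \cap \operatorname{Im} i$ is a summand of $Y$ (hence of projective dimension at most $n-1$), $X_1$ is a summand of $X$ (hence lies in $\mathscr{T}_n(R)$), and $0 \to X_1 \cap \operatorname{Im} i \to X_1 \xrightarrow{p|_{X_1}} M \to 0$ is again an $n$-AB approximation with $\mu(X_1) < \mu(X)$ because $R$ is local. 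Iterating terminates in finitely many steps at a minimal approximation, with no appeal to Krull--Schmidt. The equivalence of this minimality with right minimality of $p_M$ is then exactly the content of Theorem~\ref{minimality} (Simon's result, valid without completeness), which is what your uniqueness argument needs. The same remark applies dually to part (2).
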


\begin{proof}
It suffices to prove (1), since (2) is its dual statement.  
By Theorem~\ref{thmnABapp}, $M$ admits an $n$-AB approximation $0\to Y \xrightarrow{i} X\to  M \to 0$.
By removing any direct summands common to $Y$ and $X$ via $i$, we obtain a minimal $n$-AB approximation of $M$.  
By Theorem~\ref{minimality}, minimality is equivalent to right minimality, so the lifting property and a standard argument yield the uniqueness of the minimal one up to isomorphism.
See also the proofs of \cite[Proposition~11.13]{LW} and \cite[Theorem~2.4]{Yos93}.
\end{proof}

Over a Gorenstein ring, Auslander's $\delta$-invariant is defined using the existence of minimal MCM approximations, as follows.

\begin{dfn}\cite{AusM, ADS}
Let $R$ be a Gorenstein local ring and $M$ a finitely generated $R$-module.
Take a minimal MCM approximation $0 \to Y_M \to X_M \to M \to 0$ of $M$ and consider a decomposition $X_M \cong \underline{X} \oplus F$, where $\underline{X}$ is a stable module and $F$ is a free module.  
Then the {\em $\delta$-invariant} $\delta_R(M)$ of $M$ is defined to be the rank of the free module $F$.  
In other words, $\delta_R(M)$ is the rank of the largest free summand of the minimal MCM approximation $X_M$.
\end{dfn}

The definition of the $\xi$-invariant does not depend on the existence of approximations such as MCM approximations.  
However, Martsinkovsky showed in \cite[Theorem 2.1 and Proposition 2.3]{Mart96I} that the following theorem holds over Gorenstein local rings.

\begin{thm}[Martsinkovsky]\label{Mart}
Let $R$ be a Gorenstein local ring and $M$ a finitely generated $R$-module.
Then the equality $\delta_R(M)=\xi_R(M)$ holds true.
\end{thm}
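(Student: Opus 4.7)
The plan is to derive the theorem as a direct consequence of Theorem~\ref{Main}(1), after showing that the non-decreasing sequence $\xi_R(n,M)$ stabilises once $n$ reaches the Krull dimension $d$ of $R$.

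Let $0 \to Y_M \to X_M \to M \to 0$ be the minimal MCM approximation of $M$ granted by the classical Auslander--Buchweitz theorem. The first step is to check that this same short exact sequence is in fact a minimal $n$-AB approximation of $M$ for every integer $n \ge d$. The approximation conditions hold because $X_M$ is maximal Cohen--Macaulay over the Gorenstein local ring $R$ and hence totally reflexive, giving $\Ext^i_R(X_M, R) = 0$ for all $i \ge 1$ (and in particular for $1 \le i \le n$), while $\pd_R Y_M \le d - 1 \le n - 1$. Minimality carries over as well: Definition~\ref{mindef}(1) refers only to the internal structure of the sequence, namely that $X_M$ and $Y_M$ share no direct summand via the inclusion, a condition independent of $n$. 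By the uniqueness part of Corollary~\ref{minapphull}(1), this sequence is therefore \emph{the} minimal $n$-AB approximation of $M$ for all $n \ge d$.

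Applying Theorem~\ref{Main}(1) at each such $n$ then yields that $\xi_R(n, M)$ equals the rank of the maximal free summand of $X_M$, which by definition is exactly $\delta_R(M)$. Hence the non-decreasing sequence
\[
\xi_R(0,M) \le \xi_R(1,M) \le \cdots
\]
is constantly equal to $\delta_R(M)$ from $n = d$ onwards, and passing to the limit gives $\xi_R(M) = \delta_R(M)$.

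The only real work beyond invoking Theorem~\ref{Main}(1) is the identification of the minimal MCM approximation with the minimal $n$-AB approximation for every $n \ge d$, but this is formal once Definition~\ref{mindef} is unpacked. A notable feature of this approach is that it avoids the differential graded machinery underlying Martsinkovsky's original proof, the bulk of the delicate argument having already been absorbed into Theorem~\ref{Main}(1).
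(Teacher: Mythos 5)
Your proposal is correct and follows exactly the route the paper takes: Corollary~\ref{cors}(1-i) derives Martsinkovsky's theorem from Theorem~\ref{ABrank} by observing that a minimal MCM approximation over a $d$-dimensional Gorenstein local ring serves as the minimal $n$-AB approximation for every $n\ge d$, so that $\xi_R(n,M)=\delta_R(M)$ for all $n\ge d$ and the limit defining $\xi_R(M)$ is already attained. The only part you spell out that the paper leaves implicit is the verification that minimality in the sense of Definition~\ref{mindef} is an intrinsic property of the sequence and hence does not depend on $n$, which is a worthwhile clarification but not a departure in method.
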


As in the theorem above, we aim to establish a method for computing the $n$-th approximated $\xi$-invariant using $n$-AB approximations and $n$-FPD hulls.
We begin with some preparations.  
When a module $M$ is Ext-orthogonal to the ring $R$, the following symmetry arises as a direct consequence of Lemma~\ref{VExtortho}.

\begin{cor}\label{symm}
Let $M$ be a finitely generated $R$-module and $n\ge0$ an integer.
\begin{enumerate}[\rm(1)]
   \item 
   Let $a$ and $b$ be integers with $0< a\le b$.
   If $\Ext^i_R(M,R)=0$ for all $a\le i\le b$, then the equalities $\xi_R(a-1,M)=\xi_R(a,M)=\cdots=\xi_R(b-1,M)=\xi_R(b,M)$ hold true.
   \item 
   Suppose $\Ext^i_R(M,R)=0$ for all $1\le i\le n$,
   Then $\xi_R(j,M)$ is the rank of the largest free summand of $M$ for any $0\le j\le n$.
\end{enumerate}
\end{cor}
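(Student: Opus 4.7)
The proof should be a short and direct application of results already established in the excerpt, so the plan is straightforward.

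For part (1), the plan is to apply Lemma~\ref{VExtortho} with $\Lambda = R$ and $N = k$. Under the hypothesis $\Ext^i_R(M,R) = 0$ for all $a \le i \le b$, that lemma yields the chain of equalities of subgroups
\[
\V_{a-1}(M,k) = \V_a(M,k) = \cdots = \V_{b-1}(M,k) = \V_b(M,k).
\]
Since each $\V_j(M,k) = \V_j(M)$ is a $k$-subspace of $\Hom_R(M,k)$ (it is stable under scalar multiplication by $R$, which acts through $k$ because $k$ is the target), taking $\dim_k$ of both sides and using the definition $\xi_R(j,M) = \dim_k \V_j(M)$ gives the desired equalities. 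No additional work is required.

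For part (2), the plan is to reduce to part (1). If $n = 0$, the claim is exactly Proposition~\ref{immlem}(0). If $n \ge 1$, set $a = 1$ and $b = n$ in part (1); the hypothesis $\Ext^i_R(M,R) = 0$ for $1 \le i \le n$ gives
\[
\xi_R(0,M) = \xi_R(1,M) = \cdots = \xi_R(n,M).
\]
By Proposition~\ref{immlem}(0), the leftmost term equals the rank of the largest free summand of $M$, so the same is true of $\xi_R(j,M)$ for each $0 \le j \le n$.

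There is no substantial obstacle: the content of the corollary is entirely packaged inside Lemma~\ref{VExtortho} (stabilization of the $\V_\ast$ sequence in the presence of Ext-vanishing) and Proposition~\ref{immlem}(0) (identification of $\xi_R(0,-)$ with the free rank). The only minor point worth flagging in the write-up is the observation that $\V_j(M)$ is automatically a $k$-subspace, which legitimizes passing from the equality of abelian groups in Lemma~\ref{VExtortho} to equality of their $k$-dimensions.
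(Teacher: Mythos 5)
Your proof is correct and follows exactly the same route the paper takes: part (1) is a direct application of Lemma~\ref{VExtortho} with $N=k$, and part (2) reduces to part (1) together with Proposition~\ref{immlem}(0). The extra remark that $\V_j(M)$ is a $k$-subspace (so that equality of subgroups gives equality of $k$-dimensions) is a sensible clarification the paper leaves implicit.
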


\begin{proof}
The assertion (2) follows from the assertion and Proposition \ref{immlem}.
The assertion (1) is a consequence of Lemma \ref{VExtortho}.
\end{proof}

The following proposition is particularly useful in determining whether a given $n$-AB approximation is minimal.  
The original result for MCM approximations over Gorenstein local rings was given by Auslander \cite{AusM}, and an explicit proof in this case is provided in \cite[Corollary~2.10]{Yos93}.

\begin{prop}\label{nABmin}
Let $M$ be a finitely generated $R$-module and $n \ge 0$ an integer.  
Suppose that $M$ admits an $n$-AB approximation 
$0\to Y\xrightarrow{i}X\xrightarrow{p}M \to 0$.
Write $X = \underline{X} \oplus F$, where $\underline{X}$ is a stable module and $F$ is a free module, and write $p = (p_0, p_1) : X = \underline{X} \oplus F \to M$.  
Then the composite homomorphism $F \xrightarrow{p_1} M \overset{\pi}{\twoheadrightarrow} M /{\image p_0}$ is surjective, where $\pi:M\twoheadrightarrow \Cok p_0=M /{\image p_0}$ is the natural surjection.
Hence, the inequality $\mu_R(F)\ge\mu_R(\Cok p_0)$ holds.
Moreover, the following conditions are equivalent.
\begin{enumerate}[\rm(1)]
\item
The approximation $0\to Y\xrightarrow{i}X\xrightarrow{p}M \to 0$ is minimal.
\item  
The composite homomorphism $F \xrightarrow{p_1} M \overset{\pi}{\twoheadrightarrow} \Cok p_0$ is a minimal free cover of $\Cok p_0$, that is, the equality $\mu_R(F)=\mu_R(\Cok p_0)$ holds.
\end{enumerate}
\end{prop}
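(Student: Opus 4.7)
The first two assertions are immediate from the surjectivity of $p = (p_0, p_1) : \underline{X} \oplus F \to M$: given $m \in M$, writing $m = p_0(x) + p_1(f)$ gives $\pi(m) = \pi(p_1(f))$, so $\pi \circ p_1$ is surjective, and the inequality $\mu(F) \ge \mu(\Cok p_0)$ then follows from Nakayama's lemma.

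For the equivalence $(1) \Longleftrightarrow (2)$, the key observation I will exploit is a correspondence between free direct summands of $X$ that lie inside $Y$ and elements of $F \setminus \m F$ annihilated by $\pi \circ p_1$. In one direction, given $f \in \Ker(\pi p_1) \setminus \m F$ and writing $p_1(f) = p_0(x)$, the element $\xi = (-x, f) \in Y$ generates a free summand $R\xi \subseteq X$: extending $f$ to a basis of $F$ yields a coordinate functional $\lambda = (0, \lambda_2) : X \to R$ with $\lambda(\xi) = 1$. Conversely, for any free summand $R\xi \subseteq X$ whose generator $\xi = (x, f)$ lies in $Y$, the relation $p(\xi) = 0$ gives $\pi p_1(f) = 0$; moreover any splitting functional $\lambda = (\lambda_1, \lambda_2) : X \to R$ with $\lambda(\xi) \in R^{\times}$ satisfies $\lambda_1(\underline{X}) \subseteq \m$ (since the stable module $\underline{X}$ admits no $R$-linear map to $R$ whose image contains a unit), so $\lambda_2(f)$ is a unit, forcing $f \notin \m F$. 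This establishes $(1) \Longrightarrow (2)$ by contrapositive: if $\mu(F) > \mu(\Cok p_0)$, then $\Ker(\pi p_1) \not\subseteq \m F$, and the forward direction produces a free summand of $X$ inside $Y$ contradicting minimality.

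For $(2) \Longrightarrow (1)$, let $X_0 \subseteq X$ be any nonzero direct summand with $X_0 \subseteq Y$. Writing $X = X_0 \oplus X_1$, one checks $Y = X_0 \oplus (Y \cap X_1)$, so $X_0$ is simultaneously a direct summand of $Y$ (giving $\pd_R X_0 \le \pd_R Y \le n - 1$) and of $X$ (giving $\Ext^i_R(X_0, R) = 0$ for $1 \le i \le n$). Over a commutative local ring, a minimal free resolution together with Nakayama's lemma shows that $\Ext^{\pd_R N}_R(N, R) \ne 0$ whenever $0 < \pd_R N < \infty$; applied to $X_0$ this forces $\pd_R X_0 = 0$, so $X_0$ is free. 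Picking a basis element $e = (x, f)$ of $X_0$, the free summand $Re \subseteq X$ lies in $Y$, and the backward half of the correspondence yields $f \notin \m F$ together with $\pi p_1(f) = 0$, contradicting the minimality of $\pi \circ p_1$. Hence $X_0 = 0$, and the approximation is minimal.

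The main obstacle is not the elementary surjectivity or the explicit coordinate functionals but the structural step forcing the common summand $X_0$ to be free. Here the commutative local hypothesis, via the classical identity $\Ext^{\pd_R N}_R(N, R) \ne 0$ for $0 < \pd_R N < \infty$, is decisive: it is what restricts the possible redundancy in an $n$-AB approximation over $R$ and makes $\mu(F) = \mu(\Cok p_0)$ a tight minimality criterion.
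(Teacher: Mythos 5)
Your proof is correct and is essentially the argument the paper alludes to by citing Yoshino and Leuschke--Wiegand; the paper leaves the details to those references, while you have carried them out explicitly in the $n$-AB setting. The elementary assertions (surjectivity of $\pi\circ p_1$ and the inequality via Nakayama), the two-way correspondence between elements of $\Ker(\pi p_1)\setminus\m F$ and nonzero free summands of $X$ inside $\Ker p$, and the use of $\lambda_1(\underline{X})\subseteq\m$ to force $\lambda_2(f)$ to be a unit, are all correct. The only place requiring genuine care beyond the classical MCM case is the step in $(2)\Rightarrow(1)$ forcing a common summand $X_0$ to be free, and you handle it correctly: from $X=X_0\oplus X_1$ with $X_0\subseteq\Ker p$ you get $\Ker p = X_0\oplus(\Ker p\cap X_1)$, so $X_0$ inherits both $\pd_R X_0\le n-1$ (from $Y$) and $\Ext_R^i(X_0,R)=0$ for $1\le i\le n$ (from $X$), and the local-ring fact $\Ext_R^{\pd_R N}(N,R)\ne0$ for $N\ne0$ of finite projective dimension leaves $\pd_R X_0=0$ as the only possibility. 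This replaces, in the general $n$-AB situation, the Gorenstein-specific observation that an MCM module of finite projective dimension is free, and it is precisely the adaptation the paper asks the reader to supply when it says "a similar argument works in this case."
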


\begin{proof}
A similar argument to that in the proof of \cite[Lemma~2.8]{Yos93} works in this case.  
Alternatively, an explanation can also be found in \cite[Lemma~11.26]{LW}.
\end{proof}

The following theorem is an analogue of Theorem~\ref{Mart}, due to Martsinkovsky, for the $n$-th approximated $\xi$-invariant.  
Here, we remark that it is not necessary to assume the Gorenstein property of the ring, and that if $M$ is an object of $\mathscr{A}_n$, then $\xi_R(n, M)$ can be computed by using an $n$-AB approximation.

\begin{thm}\label{ABrank}
Let $M$ be a finitely generated $R$-module and $n \ge 0$ an integer.  
Assume that $M \in \mathscr{A}_n(R)$, and take a minimal $n$-AB approximation 
$0 \to Y_M \xrightarrow{i_M} X_M \xrightarrow{p_M} M \to 0$.  
Then the $n$-th approximated $\xi$-invariant $\xi_R(n, M)$ coincides with the rank of the largest free summand of $X_M$.
\end{thm}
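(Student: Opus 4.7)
The plan is to decompose $X_M = \underline{X} \oplus F$ with $\underline{X}$ stable and $F$ free of rank $r$, and then identify $\V_n(M)$ with $\V_n(X_M) \cong k^r$ through the surjection $p_M$.

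First, since $\pd_R Y_M \le n-1$, the module $\syz^n Y_M$ is projective, and the horseshoe lemma applied to the $n$-AB approximation shows that $\syz^n p_M : \syz^n X_M \to \syz^n M$ is an isomorphism in $\lmod R$. Combined with the injectivity of the pullback $p_M^* : \Hom_R(M,k) \hookrightarrow \Hom_R(X_M, k)$, the naturality square
\[
\xymatrix@R-.5pc{
\Hom_R(M,k) \ar[r]^-{p_M^*} \ar[d] & \Hom_R(X_M, k) \ar[d] \\
\lhom_R(\syz^n M, \syz^n k) \ar[r]^-{\sim} & \lhom_R(\syz^n X_M, \syz^n k)
}
\]
implies that $p_M^*$ restricts to an injection $\V_n(M) \hookrightarrow \V_n(X_M)$, and that $f \in \V_n(M)$ if and only if $f \circ p_M \in \V_n(X_M)$.

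Next, since $\Ext^i_R(X_M, R) = 0$ for $1 \le i \le n$, Corollary \ref{symm}(2), together with Lemma \ref{Vlem}(1)(2) and Proposition \ref{immlem}(0), yields
\[
\V_n(X_M) = \V_0(X_M) = \V_0(\underline{X}) \oplus \V_0(F) = 0 \oplus \Hom_R(F,k) \cong k^r.
\]
Writing $p_M = (p_0, p_1)$ and $i_M = \binom{i_0}{i_1}$ with respect to $X_M = \underline{X} \oplus F$, one checks that $i_1(Y_M) = p_1^{-1}(\image p_0)$ is precisely the kernel of the induced surjection $F \to \Cok p_0$. By Proposition \ref{nABmin}, the minimality of the $n$-AB approximation forces this map to be a minimal free cover, so its kernel is contained in $\mathfrak{m}F$ by Nakayama. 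Hence every $g \in \V_n(X_M) \cong \Hom_R(F,k)$ satisfies $g \circ i_M = 0$, and so factors as $g = f \circ p_M$ for a unique $f \in \Hom_R(M,k)$; by the first step, this $f$ lies in $\V_n(M)$. This produces the isomorphism $\V_n(M) \cong \V_n(X_M) \cong k^r$, giving $\xi_R(n,M) = r$. The main obstacle is this last surjectivity step, where one must convert the abstract minimality hypothesis into the concrete containment $\ker(F \to \Cok p_0) \subseteq \mathfrak{m}F$; Proposition \ref{nABmin} performs exactly this translation, after which the argument reduces to diagram chasing and Nakayama's lemma.
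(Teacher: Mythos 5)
Your proof is correct and, while organized a bit differently, it rests on the same three pillars as the paper's argument: the stable isomorphism $\underline{\syz^n p_M}\colon \syz^n X_M \to \syz^n M$, the computation $\V_n(X_M)=0\oplus\Hom_R(F,k)$ via Corollary~\ref{symm} and Lemma~\ref{Vlem}, and the translation of minimality into the statement that $F\to\Cok p_0$ is a minimal free cover via Proposition~\ref{nABmin}. The only difference is packaging: the paper proves $\mu_R(F)\ge\xi_R(n,M)$ and $\xi_R(n,M)\ge\mu_R(\Cok p_0)=\mu_R(F)$ as two separate inequalities, the latter through the auxiliary surjection $\pi\colon M\twoheadrightarrow\Cok p_0$, whereas you fuse them into the single observation that $p_M^*$ restricts to an isomorphism $\V_n(M)\xrightarrow{\sim}\V_n(X_M)$, with surjectivity obtained from $i_1(Y_M)\subseteq \m F$ in place of the dimension count $\mu_R(F)=\mu_R(\Cok p_0)$.
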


\begin{proof}
Write $X_M = \underline{X} \oplus F$, where $\underline{X}$ is a stable module and $F$ is a free module, and write $p_M = (p_0, p_1) : \underline{X} \oplus F \to M$.  
Then $\underline{X}$ is a stable module satisfying $\Ext^i_R(\underline{X}, R) = 0$ for all $1 \le i \le n$, and $p_M : X_M \to M$ is surjective.  
Hence, by Proposition~\ref{immlem} and Corollary~\ref{symm}(2), we obtain
$$
\mu(F) = \xi(n, F) = \xi(n, \underline{X} \oplus F)=\xi(n,X_M) \ge \xi(n, M).
$$
Therefore, it suffices to show that $\xi(n, M) \ge \mu(F)$.
Let $C = \Cok p_0$, and let $\pi : M \twoheadrightarrow \Cok p_0$ denote the natural surjection.
We consider the following commutative diagram with exact rows and columns:
$$
\xymatrix@R-1pc@C-1pc{
&&0\ar[d]&&0\ar[d]&&\\
0\ar[rr]&&\V_n(C)\ar[dd]\ar[rr]&&\Hom(C,k)\ar[rr]\ar[dd]^-{\Hom(\pi,k)}&&\lhom(\syz^nC,\syz^nk)\ar[dd]^-{\lhom(\syz^n\pi,\syz^nk)}\\
&&&&&&&\\
0\ar[rr]&&\V_n(M)\ar[rr]&&\Hom(M,k)\ar[rr]&&\lhom(\syz^nM,\syz^nk).
}
$$
We will show that $\underline{\syz^n \pi} : \syz^n M \to \syz^n C$ is the zero map in the stable module category $\lmod R$.  
Indeed, in the $n$-AB approximation $0 \to Y_M \to X_M \xrightarrow{p_M} M \to 0$, the module $Y_M$ has projective dimension at most $n - 1$.  
Therefore, by considering the long exact sequence of Ext, we see that $\underline{\syz^n p_M} : \syz^n X_M \to \syz^n M$ is an isomorphism in $\lmod R$.
Also, the morphism $\underline{\binom{1}{0}} : \underline{X} \to \underline{X} \oplus F$ is an isomorphism in $\lmod R$, and since $\underline{p_0} = \underline{p_M} \circ \underline{\binom{1}{0}}$, it follows that $\underline{\syz^n p_0} : \syz^n \underline{X} \to \syz^n M$ is also an isomorphism in $\lmod R$.
Moreover, since the composition $\pi \circ p_0 : \underline{X} \to M \twoheadrightarrow C$ is the zero map, the composition  
$\underline{\syz^n \pi} \circ \underline{\syz^n p_0} = \underline{\syz^n(\pi \circ p_0)} : \syz^n \underline{X} \to \syz^n C$  
is also the zero map in $\lmod R$.  
However, since $\underline{\syz^n p_0}$ is an isomorphism, we conclude that $\underline{\syz^n \pi} = \underline{0}$ in $\lhom(\syz^n M, \syz^n C)$.

Consequently, the homomorphism $\lhom(\syz^n \pi, \syz^n k)$ is the zero map, and the homomorphism $\Hom(\pi, k)$ induces an injective homomorphism $\Hom(C, k) \hookrightarrow \V_n(M)$.  
On the other hand, by Proposition~\ref{nABmin}, we have the equality $\mu(F) = \mu(C)$.  
Therefore, we obtain
$$
\xi(n, M) = \mu(\V_n(M)) \ge \mu(C) = \mu(F),
$$
and the proof is completed.
\end{proof}

The following theorem was proved by Auslander \cite{AusM} and announced at the Berkeley symposium on commutative algebra held at MSRI in 1987.

\begin{thm}[Auslander]\label{AusBerk}
Let $R$ be a Gorenstein local ring and $M$ a finitely generated $R$-module.
Then $\delta_R(M)=0$ if and only if every morphism $f:M\to Z$ in $\mod R$ with $\pd_RZ<\infty$ satisfies $f\otimes_R k=0$.
\end{thm}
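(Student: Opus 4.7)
The plan is to derive Theorem~\ref{AusBerk} as a direct consequence of our main Theorem~\ref{Main}(2), together with the Auslander--Martsinkovsky equality $\delta_R(M)=\xi_R(M)$ from Theorem~\ref{Mart}. The key observation is that over a Gorenstein local ring every finitely generated module lies in $\mathscr{E}_n(R)$ for every $n$, so the characterization of the vanishing of $\xi_R(n,M)$ provided by Theorem~\ref{Main}(2) applies without restriction, and the desired statement will then follow by letting $n$ vary.

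First, I would note that since $R$ is Gorenstein, every localization $R_{\mathfrak{p}}$ is Gorenstein, so condition~(iii) of Remark~\ref{excl}(2) is trivially satisfied for every $n\ge 0$. Consequently $\mathscr{E}_n(R)=\mod R$ for all $n$, and in particular any finitely generated $R$-module $M$ admits an $n$-origin extension for every $n$. Theorem~\ref{Main}(2) therefore yields, for each $n\ge 0$, the equivalence
\[
\xi_R(n,M)=0 \iff f\otimes_R k=0 \text{ for every } f\colon M\to Z \text{ with } \pd_R Z\le n.
\]

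Next, I would exploit the non-decreasing sequence
\[
\xi_R(0,M)\le \xi_R(1,M)\le\cdots\le \xi_R(M)=\lim_{n\to\infty}\xi_R(n,M).
\]
Because the sequence is non-decreasing with $\xi_R(M)$ as its supremum, the equality $\xi_R(M)=0$ is equivalent to $\xi_R(n,M)=0$ for every $n\ge 0$. On the other hand, a finitely generated $R$-module $Z$ has finite projective dimension if and only if $\pd_R Z\le n$ for some $n$. Combining these two observations with the equivalence displayed above (quantified over all $n$) produces the equivalence
\[
\xi_R(M)=0 \iff f\otimes_R k=0 \text{ for every } f\colon M\to Z \text{ with } \pd_R Z<\infty.
\]

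Finally, since $R$ is Gorenstein, Theorem~\ref{Mart} (Martsinkovsky) gives $\delta_R(M)=\xi_R(M)$, so the previous equivalence becomes exactly the statement of Theorem~\ref{AusBerk}. There is no serious obstacle in this argument: all of the analytic work is already absorbed in Theorem~\ref{Main}(2); the only thing to verify carefully is that the Gorenstein hypothesis is strong enough to guarantee $M\in\mathscr{E}_n(R)$ for every $n$, which is immediate from Remark~\ref{excl}(2)(iii), and that the passage from the levelwise statement (``$\pd_R Z\le n$'') to the global statement (``$\pd_R Z<\infty$'') is justified by the monotonicity of $\xi_R(n,M)$.
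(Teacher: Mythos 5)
Your proposal is correct and follows essentially the same route the paper itself takes: the paper records this statement as Corollary~\ref{cors}(1-ii) and derives it from Theorem~\ref{orginm} (= Theorem~\ref{Main}(2)), using that over a Gorenstein ring every finitely generated module lies in $\mathscr{E}_n(R)$ for all $n$, the monotone convergence $\xi_R(n,M)\uparrow\xi_R(M)$, and Martsinkovsky's equality $\delta_R(M)=\xi_R(M)$. Your write-up simply makes these steps explicit, including the correct appeal to Remark~\ref{excl}(2)(iii) for the Gorenstein case.
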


The theorem above given by Auslander is generalized as follows.  
Known proofs of the above theorem, such as those described in \cite[Lemma~2.11 and Corollary~2.12]{Yos93} or \cite[Lemma~2.1 and Corollary~2.3]{Kfree}, rely crucially on the existence of FPD hulls.  
However, the following theorem can be proved under the sole assumption that the module is an object of $\mathscr{E}_n'(R)$; therefore it is not necessarily a submodule of a module of finite projective dimension.
Note that the implication \textup{(1)} $\Longrightarrow$ \textup{(2)} in the following theorem is a refinement of \cite[Lemma~9]{Mart96rem}.

\begin{thm}\label{orginm}
Let $M$ be a finitely generated $R$-module and $n \ge 0$ an integer.  
Consider the following conditions:
\begin{enumerate}[\rm(1)]
   \item
   One has $\xi_R(n, M) = 0$.
   \item
   Every morphism $f : M \to Z$ in $\mod R$ with $\pd_R Z \le n$ satisfies $f \otimes_R k = 0$.
\end{enumerate}
Then the implication \textup{(1)} $\Longrightarrow$ \textup{(2)} always holds. 
The converse also holds if $M$ is an object of $\mathscr{E}_n'(R)$.
\end{thm}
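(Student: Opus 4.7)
The plan is to prove (1)$\Rightarrow$(2) by a direct stability argument, and to handle the converse—under $M\in\mathscr{E}_n(R)$—by using an $n$-origin extension to manufacture a concrete module of projective dimension at most $n$ through which the map $f$ genuinely factors in $\mod R$.

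For (1)$\Rightarrow$(2), I would fix $f\colon M\to Z$ with $\pd_R Z\le n$ and an arbitrary $g\colon Z\to k$. Lemma~\ref{Vlem}(2) gives $f\in\V_n(M,Z)$, i.e., $\underline{\syz^n f}=\underline 0$ in $\lhom_R(\syz^n M,\syz^n Z)$; functoriality of $\syz^n$ on the stable category then yields
\[
\underline{\syz^n(g\circ f)}=\underline{\syz^n g}\circ\underline{\syz^n f}=\underline 0,
\]
so $g\circ f\in \V_n(M)$ and hence vanishes by hypothesis (1). Since any $R$-homomorphism $M\to k$ factors through $M/\m M$, the vanishing of $g\circ f$ for every $g\colon Z\to k$ is precisely $f\otimes_R k=0$.

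For the converse, I would take an $n$-origin extension
\[
0\to X\xrightarrow{\iota}M\oplus P\xrightarrow{\pi}Y\to 0
\]
with $P$ projective, $\pd_R Y\le n$, and $\Ext_R^i(X,R)=0$ for $1\le i\le n$. Given $f\in\V_n(M)$, set $f'=(f,0)\colon M\oplus P\to k$, which lies in $\V_n(M\oplus P)$ by Lemma~\ref{Vlem}(1). Lemma~\ref{VExtortho} (applied with $a=1$, $b=n$) identifies $\V_n(X,k)=\V_0(X,k)=\mathscr{P}(X,k)$, so the composite $f'\circ\iota$ factors as $X\xrightarrow{h}R^m\xrightarrow{g}k$. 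The key step is now to form the pushout
\[
\xymatrix@R-1pc@C-1pc{
X\ar[r]^-{\iota}\ar[d]_-{h}&M\oplus P\ar[d]^-{\alpha}\\
R^m\ar[r]&Q,
}
\]
which yields a short exact sequence $0\to R^m\to Q\to Y\to 0$ and hence $\pd_R Q\le n$. The universal property of the pushout, together with the compatibility $f'\circ\iota=g\circ h$, produces $\tilde f\colon Q\to k$ with $\tilde f\circ\alpha=f'$. Restricting $\alpha$ along the inclusion $M\hookrightarrow M\oplus P$ gives $\alpha|_M\colon M\to Q$ with $f=\tilde f\circ\alpha|_M$; applying hypothesis (2) to $\alpha|_M$ yields $\alpha|_M\otimes_R k=0$, and therefore $f\otimes_R k=0$, i.e., $f=0$ in $\Hom_R(M,k)$, as required.

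The crux is the pushout construction: the datum $\underline{\syz^n f}=\underline 0$ lives only in the stable category and, a priori, does not exhibit $f$ as factoring through a module of projective dimension at most $n$ in $\mod R$. The $n$-origin extension provides exactly the scaffolding needed to promote this stable-categorical vanishing into an honest factorization in $\mod R$ through the finite-projective-dimension module $Q$, which is why the converse implication genuinely requires $M\in\mathscr{E}_n(R)$.
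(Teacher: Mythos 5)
Your proof of the implication \textup{(1)}~$\Longrightarrow$~\textup{(2)} is essentially the paper's argument: both reduce to composing $f$ with maps $Z\to k$, using $\pd_R Z\le n$ to kill $\underline{\syz^n f}$, and invoking the injectivity of $\Hom(M,k)\to\lhom(\syz^n M,\syz^n k)$ given by $\xi_R(n,M)=0$.

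For the converse, your route is genuinely different from the paper's, and it is correct. The paper first applies Theorem~\ref{ABrank} to a minimal $n$-AB approximation so as to read $\xi_R(n,M)$ off the free summand of $X_M$, then builds a module $\Cok\binom{p_M}{s}$ of projective dimension at most $n$ from a left $\proj$-approximation, and finally compares minimal numbers of generators via a minimal-free-cover argument to force $\xi=0$. You instead take an arbitrary $f\in\V_n(M)$, use the $n$-origin extension $0\to X\to M\oplus P\to Y\to 0$ directly and the $\Ext$-vanishing on $X$ (via Lemma~\ref{VExtortho}, or trivially when $n=0$) to factor $f'\circ\iota$ through a free module, and push out along $\iota$ to obtain a module $Q$ with $\pd_R Q\le n$ and a commuting triangle exhibiting $f$ as $\tilde f\circ\alpha|_M$; hypothesis~(2) applied to $\alpha|_M\colon M\to Q$ then kills $f\otimes_R k$ and hence $f$ itself. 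Your argument verifies $\V_n(M)=0$ directly, so it sidesteps Theorem~\ref{ABrank}, the minimality of the approximation, and the generator-counting bookkeeping entirely; the trade-off is that the paper's approach yields, along the way, the more structural information that $\xi_R(n,M)$ is realized as the free rank of $X_M$, whereas yours isolates precisely what the hypothesis $M\in\mathscr{E}_n(R)$ buys — namely, an honest factorization in $\mod R$ through a finite-projective-dimension module — which makes the role of the assumption more transparent.

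Two small stylistic points: you should state explicitly that $f'\circ\iota\in\V_n(X,k)$ follows from $\underline{\syz^n(f'\circ\iota)}=\underline{\syz^n f'}\circ\underline{\syz^n\iota}=\underline 0$ before invoking Lemma~\ref{VExtortho}; and for $n=0$ the appeal to Lemma~\ref{VExtortho} is vacuous, but $\V_0(X,k)=\mathscr{P}(X,k)$ holds by definition, so the argument still goes through uniformly.
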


\begin{proof}
Let us prove the implication \textup{(1)} $\Longrightarrow$ \textup{(2)}.  
Fix a morphism $f : M \to Z$ in $\mod R$ with $\pd Z \le n$.  
We consider the natural surjection $\pi : Z \twoheadrightarrow Z /{\m Z}$, and it suffices to show that the composition $M \xrightarrow{f} Z \xrightarrow{\pi} Z /{\m Z}$ is the zero map.
We write $\pi = (\pi_1, \pi_2, \ldots, \pi_r)^{\mathsf{T}} : M \to Z \cong k^{\oplus r}$ via the isomorphism $Z/\m Z \cong k^{\oplus r}$, where $r = \mu(Z)$.
For each $1 \le i \le r$, since $\syz^n Z$ is a free module, the morphism  
$\underline{\syz^n(\pi_i \circ f)} = \underline{\syz^n \pi_i} \circ \underline{\syz^n f} : \syz^n M \to \syz^n Z \to \syz^n k$  
is zero in $\lmod R$.
Moreover, the assumption $\xi(n, M) = 0$ implies that the homomorphism $\Hom(M, k) \to \lhom(\syz^n M, \syz^n k)$ is injective.  
Therefore, for each $1 \le i \le r$, the composition $\pi_i \circ f : M \to Z \to k$ is zero in $\mod R$, and we conclude that $\pi \circ f = 0$.

Next, assume that $M \in \mathscr{E}_n'(R)$, and let us prove the implication \textup{(2)} $\Longrightarrow$ \textup{(1)}.  
In particular, $M$ is an object of $\mathscr{A}_n(R)$, so by Corollary~\ref{minapphull}, there exists a minimal $n$-AB approximation  
$0 \to Y_M \xrightarrow{i_M} X_M \xrightarrow{p_M} M \to 0$.  
By Theorem~\ref{ABrank}, there is an isomorphism $X_M \cong \underline{X} \oplus R^{\oplus \xi}$, where $\underline{X}$ is a stable module and $\xi = \xi(n, M)$.
Take a left $\proj R$-approximation $s_0 : \underline{X} \to R^{\oplus m}$ of $\underline{X}$ and consider an exact sequence $\underline{X} \xrightarrow{s_0} R^{\oplus m} \to C \to 0$, where $C = \Cok s_0$.  
Since $\underline{X}$ is stable, it follows from Proposition~\ref{immlem}\textup{(0)} or its proof that the map $s_0 \otimes k : \underline{X} \otimes k \to R^{\oplus m} \otimes k$ is the zero map, that is, we have $m = \mu(C)$.  
By taking the direct sum of the free module $R^{\oplus \xi}$ with the first two terms of the exact sequence, we obtain an exact sequence  
$X_M \xrightarrow{s} R^{\oplus (m + \xi)} \to C \to 0$.  
Note that the homomorphism $s = \begin{pmatrix} s_0 & 0 \\ 0 & 1_{R^{\oplus \xi}} \end{pmatrix} : X_M = \underline{X} \oplus R^{\oplus \xi} \to R^{\oplus (m + \xi)}$  
is also a left $\proj R$-approximation of $X_M$.
Then, since the assumption $M \in \mathscr{E}_n(R)$ implies that $p_M : X_M \to M$ is represented by monomorphisms by Theorem \ref{norgeq}, and since $s : X_M \to R^{\oplus (m + \xi)}$ is a left $\proj R$-approximation of $X_M$, we obtain from \cite[Theorem~3.3 and Lemma~3.4]{Omorp} that the homomorphism  
$\binom{p_M}{s} : X_M \to M \oplus R^{\oplus (m + \xi)}$ is an injection.
We consider the following commutative diagram with exact rows and columns, which appears in the proof of Theorem \ref{norgeq} as well:
$$
\xymatrix@R-1pc@C-1pc{
&&0\ar[d]&&0\ar[d]&&&&\\
0\ar[rr]&&Y_M\ar[rr]^-{s\circ i_M}\ar[dd]^-{i_M}&&F\ar[rr]\ar[dd]^-{\binom{0}{1}}&&\Cok\binom{p_M}{s}\ar[rr]\ar@{=}[dd]&&0\\
&&&&&&&\\
0\ar[rr]&&X_M\ar[rr]^-{\binom{p_M}{s}}\ar[dd]^-{p_M}&&M\oplus F\ar[dd]^-{(1, 0)}\ar[rr]&&\Cok\binom{p_M}{s}\ar[rr]&&0\\
&&&&&&&\\
&&M\ar@{=}[rr]\ar[d]&&M\ar[d]&&\\
&&0&&0,&&
}
$$
where the free module $R^{\oplus(m+\xi)}$ is denoted by $F$.
From the first row, the module $\Cok\binom{p_M}{s}$ has projective dimension at most $n$.

We will prove that the homomorphism $F \to \Cok\binom{p_M}{s}$ is a minimal free cover.  
If not, then there exists a split monomorphism $b : R \to F$ such that $s \circ i_M = (a, b) : Y_M = Y' \oplus R \to F$.  
In this case, the upper left part of the above commutative diagram decomposes as follows.
$$
\xymatrix@R-1pc@C-1pc{
Y_M\ar@{=}[r]&Y'\oplus R\ar[rr]^-{(a,b)}\ar[dd]_-{i_M=(i_0,i_1)}&&F\ar[dd]^-{\binom{0}{1}}\\
&&&&\\
&X_M\ar[rr]^-{\binom{p_M}{s}}&&M\oplus F.
}
$$
In particular, $b = s \circ i_1$ holds.  
Since $b$ is a split injection, so is $i_1$.  
Hence, via the morphism $i_M = (i_0, i_1) : Y_M \to X_M$, the modules $X_M$ and $Y_M$ share a common direct summand $R$.  
This contradicts the minimality of our approximation.  
Therefore, the homomorphism $F \to \Cok\binom{p_M}{s}$ is a minimal free cover.
Furthermore, we obtain the following pushout diagram.
$$
\xymatrix@R-1pc@C-1pc{
X_M\ar[rr]^-{s}\ar[d]^-{p_M}&&F\ar[rr]\ar[d]&&C\ar@{=}[d]\ar[rr]&&0\\
M\ar[rr]^-{f}&&\Cok\binom{p_M}{s}\ar[rr]&&C\ar[rr]&&0.
}
$$
Since the module $\Cok\binom{p_M}{s}$ has projective dimension at most $n$, our assumption \textup{(2)} implies that $f \otimes k = 0$.  
This yields an isomorphism $\Cok\binom{p_M}{s} \xrightarrow{\cong} C \otimes k$ by the exactness of the second row of the above pushout diagram.
On the other hand, since the homomorphism $F \to \Cok\binom{p_M}{s}$ is a minimal free cover, we have an isomorphism $F \otimes k \xrightarrow{\cong} \Cok\binom{p_M}{s}$.  
Consequently, the equalities  
$$
m = \mu(C) = \mu(\Cok\left(\begin{smallmatrix} p_M \\ s \end{smallmatrix}\right)) = \mu(F) = m + \xi
$$  
hold, and we conclude that $\xi(n, M) = \xi = 0$.
\end{proof}

Over a Gorenstein local ring, it was shown by Auslander \cite{AusM} that the $\delta$-invariant can be computed using an FPD hull.  
Proofs and explanations can be found, for example, in \cite[Lemma~2.11]{Yos93} and \cite[Proposition~11.36]{LW}.  
Similarly, the $n$-th approximated $\xi$-invariant can also be computed using $n$-FPD hulls.

\begin{prop}\label{FPDxi}
Let $n \ge 0$ be an integer, and let $M$ be an object of $\mathscr{H}_n(R)$.  
Then, for any $n$-FPD hull $0 \to Y \to X \to M \to 0$, the equality $\xi_R(n, M) = \mu_R(Y) - \mu_R(X)$ holds.
\end{prop}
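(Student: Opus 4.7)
The plan is to apply $\Hom_R(-,k)$ to the $n$-FPD hull $0 \to M \xrightarrow{j} Y \xrightarrow{q} X \to 0$, producing the exact sequence
\[
0 \to \Hom_R(X,k) \to \Hom_R(Y,k) \xrightarrow{j^\ast} \Hom_R(M,k) \xrightarrow{\delta} \Ext^1_R(X,k),
\]
where $\delta$ is the connecting homomorphism. Using that $\dim_k \Hom_R(N,k) = \mu_R(N)$ for any finitely generated $N$, a dimension count yields $\mu_R(Y) - \mu_R(X) = \dim_k \image j^\ast = \dim_k \ker\delta$. Hence it suffices to establish the equality $\V_n(M) = \ker\delta$ as $k$-subspaces of $\Hom_R(M,k)$.

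The inclusion $\ker \delta \subseteq \V_n(M)$ is essentially immediate: if $f = \tilde f \circ j$ factors through $Y$, then $\underline{\syz^n f} = \underline{\syz^n \tilde f} \circ \underline{\syz^n j}$; but $\pd_R Y \le n$ makes $\syz^n Y$ projective, so $\underline{\syz^n \tilde f}$ vanishes in $\lmod R$, and therefore $f \in \V_n(M)$.

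For the reverse inclusion $\V_n(M) \subseteq \ker\delta$, I would invoke the connecting morphism $\underline{\tilde\phi} : \syz X \to M$ in $\lmod R$, defined by lifting a projective cover $P \twoheadrightarrow X$ to a map $P \to Y$ and restricting to $\syz X \to \ker q = M$. Two classical facts about $\tilde\phi$ are in play: (a) under the identification $\Ext^1_R(X,k) = \lhom_R(\syz X, k)$, the connecting homomorphism $\delta$ agrees with postcomposition with $\underline{\tilde\phi}$, i.e., $\delta(f) = \underline{f \circ \tilde\phi}$; and (b) the iterated horseshoe applied to the $n$-FPD hull produces a short exact sequence $0 \to \syz^n M \to \Pi \to \syz^n X \to 0$ with $\Pi$ projective (since $\syz^n Y$ is), so that $\syz^n M$ and $\syz^{n+1} X$ are canonically isomorphic in $\lmod R$, and this isomorphism is exactly $\underline{\syz^n \tilde\phi}$. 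Given $f \in \V_n(M)$, one would then compute $\underline{\syz^n(f \circ \tilde\phi)} = \underline{\syz^n f} \circ \underline{\syz^n \tilde\phi} = 0$. Since $\syz X$ satisfies $\Ext^i_R(\syz X, R) \cong \Ext^{i+1}_R(X, R) = 0$ for $1 \le i \le n$, Lemma~\ref{VExtortho} applied with $a = 1$, $b = n$ to $\syz X$ (equivalently, the fact that $\underline{\psi^n_{\syz X}}$ is an isomorphism in $\lmod R$) shows that the natural syzygy map $\lhom_R(\syz X, k) \to \lhom_R(\syz^{n+1} X, \syz^n k)$ is itself an isomorphism. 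Consequently $\underline{f \circ \tilde\phi} = 0$, that is, $\delta(f) = 0$.

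The main obstacle will be verifying facts (a) and (b) cleanly — in particular, identifying $\underline{\syz^n \tilde\phi}$ with the canonical isomorphism $\syz^{n+1} X \xrightarrow{\sim} \syz^n M$ in $\lmod R$ produced by the horseshoe. This amounts to a careful compatibility check using the adjunction $(\tr\syz^n\tr, \syz^n)$ against explicit lifts of projective covers; the argument is essentially standard but needs to be traced with care to ensure the required naturality.
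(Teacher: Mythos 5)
Your argument is correct, and it takes a genuinely different route from the paper's. The paper reduces to a \emph{minimal} $n$-AB approximation, invokes Theorem~\ref{ABrank} to read off $\xi_R(n,M)$ as the free rank of $X_M$, and then manufactures an explicit $n$-FPD hull by a pushout whose free cover is matched against that rank; this leans on the minimality machinery of Corollary~\ref{minapphull} and Proposition~\ref{nABmin}. You instead work directly on the given hull: the dimension count $\mu_R(Y)-\mu_R(X)=\dim_k\ker\delta$ from the long exact $\Hom_R(-,k)$ sequence, combined with the identification $\V_n(M)=\ker\delta$, handles an arbitrary $n$-FPD hull with no reduction to the minimal one, and the only external input is Lemma~\ref{VExtortho}. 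Two remarks. First, the ``main obstacle'' you flag is not actually an obstacle, because you never need $\underline{\syz^n\tilde\phi}$ to be an isomorphism, nor to match it against the horseshoe isomorphism: the equality $\underline{\syz^n(f\circ\tilde\phi)}=\underline{\syz^n f}\circ\underline{\syz^n\tilde\phi}$ is plain functoriality of $\syz^n$ on $\lmod R$, and $\underline{\syz^n f}=\underline 0$ already annihilates the composite; what is then required is only \emph{injectivity} (not bijectivity) of $\lhom_R(\syz X,k)\to\lhom_R(\syz^{n+1}X,\syz^n k)$, which your appeal to Lemma~\ref{VExtortho} via $\Ext^i_R(\syz X,R)\cong\Ext^{i+1}_R(X,R)=0$ for $1\le i\le n$ already furnishes. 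Second, in fact (a) you should make explicit that the identification $\Ext^1_R(X,k)\cong\lhom_R(\syz X,k)$ is legitimate here precisely because $\Ext^1_R(X,R)=0$ (part of the defining vanishing for $X$ in an $n$-FPD hull), which makes the inclusion $\syz X\hookrightarrow P$ a left $\proj R$-approximation and hence identifies the image of $\Hom_R(P,k)\to\Hom_R(\syz X,k)$ with the subgroup $\mathscr{P}(\syz X,k)$. With these caveats, your proof is shorter and more self-contained than the paper's.
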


\begin{proof}
Essentially, the same argument as in the proof of \cite[Lemma~2.11]{Yos93} applies.  
For the reader's convenience, we give a proof.

Let $0 \to M \to Y^M \to X^M \to 0$ be the minimal $n$-FPD hull of $M$.  
First of all, we remark that for any $n$-FPD hull $0 \to M \to Y \to X \to 0$, the equality $\mu(Y^M) - \mu(X^M) = \mu(Y) - \mu(X)$ holds.
This is because, by Corollary~\ref{minapphull}, a minimal $n$-FPD hull can be obtained by removing common direct summands of $X$ and $Y$ via the homomorphism $Y \to X$.

For the minimal $n$-AB approximation $0 \to Y_M \xrightarrow{i_M} X_M \xrightarrow{p_M} M \to 0$, it follows from Theorem~\ref{ABrank} that there is a decomposition $X_M \cong \underline{X} \oplus R^{\oplus \xi}$, where $\underline{X}$ is stable and $\xi = \xi(n, M)$.  
Since $M \in \mathscr{H}_n(R)$, the module $X_M$ is torsionless, and so is $\underline{X}$.  
Therefore, there exists a short exact sequence $0 \to \underline{X} \to R^{\oplus m} \to X' \to 0$.  
As $\underline{X}$ is stable, we have $m = \mu(X')$.
We now consider the following pushout diagram:
$$
\xymatrix@R-1pc@C-1pc{
&&0\ar[d]&&0\ar[d]&&\\
&&Y_M\ar@{=}[rr]\ar[dd]^-{i_M}&&Y_M\ar[dd]&&\\
&&&&&\\
0\ar[rr]&&X_M\ar[rr]\ar[dd]^-{p_M}&&R^{\oplus (m+\xi)}\ar[rr]\ar[dd]&&X'\ar@{=}[dd]\ar[rr]&&o\\
&&&&&\\
0\ar[rr]&&M\ar[d]\ar[rr]&&Y'\ar[rr]\ar[d]&&X'\ar[rr]&&0\\
&&0&&0&&
}
$$
The short exact sequence $0 \to M \to Y' \to X' \to 0$ gives an $n$-FPD hull of $M$.  
Moreover, since $X_M$ and $Y_M$ share no common free summand via $i_M$, the homomorphism $R^{\oplus(m + \xi)} \to Y'$ is a minimal free cover of $Y'$.  
Therefore, we obtain the equalities
$$
\mu(Y^M)-\mu(X^M)=\mu(Y')-\mu(X')=m+\xi-m=\xi=\xi(n,M).
$$
\end{proof}

Auslander \cite{AusM} defined an invariant called the \emph{index} of a Gorenstein local ring.  
Let $(R, \mathfrak{m}, k)$ be a Gorenstein local ring.  
The \emph{index} of $R$ is defined as the infimum of all integers $n \ge 0$ such that $\delta_R(R /{\mathfrak{m}^n}) \ne 0$.  
Many interesting and deep studies have been done on this invariant; see, for example,  
\cite{AusM, ABIM, Ding92, Ding93, Ding94, Ding942, HS, Kob, Mart96I, Puthen, Shida, Yos93, Yos96}.
For the index of the $\xi$-invariant introduced by Martsinkovsky over an arbitrary commutative noetherian local ring , the well-definedness is stated in \cite[Proposition~4.3]{Mart96I}.  
Although the proposition below is essentially shown in the proof there, we shall present a more detailed statement with a proof based on stable module theory.
Let $\H_\m^i(-)$ denote the $i$-th \emph{local cohomology functor}, that is, the $i$-th right derived functor of the $\m$-torsion functor $\Gamma_\m(-)$, defined by $\Gamma_\m(M) = \{x \in M \mid \m^r x = 0 \text{ for some } r > 0\}$ for an $R$-module $M$.  
More explicitly, we have $\H_\m^i(-) = \varinjlim_{n > 0} \Ext^i_R(R/\m^n, -)$.

\begin{prop}\label{index}
Assume that $R$ has Krull dimension $d$.  
Then there exists an integer $n > 0$ such that $\xi_R(d, R / \m^n) \ne 0$.
\end{prop}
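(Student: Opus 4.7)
The plan is to combine Grothendieck's non-vanishing theorem for local cohomology with the stable-module-theoretic framework developed earlier in the paper. First I note the basic reduction: since $\Hom_R(R/\m^n, k) = k$ is one-dimensional, generated by the canonical projection $\pi_n$, we have $\xi_R(d, R/\m^n) \in \{0,1\}$, and this invariant equals $1$ precisely when $\underline{\syz^d \pi_n} = 0$ in $\lhom_R(\syz^d(R/\m^n), \syz^d k)$. Thus it suffices to exhibit some $n$ for which this stable vanishing holds.

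Next, I would apply Grothendieck's theorem: since $\dim R = d$ we have $\H_\m^d(R) \ne 0$, and rewriting $\H_\m^d(R) = \varinjlim_n \Ext^d_R(R/\m^n, R)$ produces an integer $n$ with $\Ext^d_R(R/\m^n, R) \ne 0$. By the standard dimension-shifting identification, a nonzero class here corresponds to a morphism $\alpha: \syz^d(R/\m^n) \to R$ that does not extend to the $(d{-}1)$-st term of the minimal projective resolution of $R/\m^n$. I would then use the adjunction $\lhom_R(\syz^d A, \syz^d B) \cong \lhom_R(\tr\syz^d\tr\syz^d A, B)$ recalled in Section~2 to rewrite the target vanishing as: the composite $\pi_n \circ \psi^d_{R/\m^n} : \tr\syz^d\tr\syz^d(R/\m^n) \to R/\m^n \to k$ factors through a projective, where $\psi^d_{R/\m^n}$ is the counit of the adjoint pair $(\tr\syz^d\tr, \syz^d)$.

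The heart of the argument, and what I expect to be the main obstacle, is then to use the Ext-non-vanishing output of Grothendieck's theorem to exhibit this factorization. The concrete goal is to produce a representative of $\psi^d_{R/\m^n}$ in $\mod R$ whose image is contained in $\m/\m^n \subseteq R/\m^n$; such a containment forces $\pi_n \circ \psi^d_{R/\m^n}$ to be the zero morphism, which is vacuously in $\mathscr{P}(\tr\syz^d\tr\syz^d(R/\m^n), k)$, and closes the argument. To establish this image containment I would combine the explicit description of $\psi^d_M$ in terms of the $d$-AB approximation (Theorem~\ref{thmnABapp}) with the fact that $R/\m^n$ is cyclic: a minimal free right $\proj R$-approximation is the surjection $R \twoheadrightarrow R/\m^n$, which already accounts for the unique minimal generator, so the Ext-class $\alpha$ witnessing $\Ext^d_R(R/\m^n, R) \ne 0$ should translate into the image of $\psi^d_{R/\m^n}$ being pushed into the maximal submodule $\m/\m^n$.
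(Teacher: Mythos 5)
Your reduction is correct: since $\Hom_R(R/\m^n,k)\cong k$ is one--dimensional and generated by the canonical surjection $\pi_n$, the invariant $\xi_R(d,R/\m^n)$ is $0$ or $1$, and it equals $1$ precisely when $\underline{\syz^d\pi_n}=\underline{0}$ in $\lhom_R(\syz^d(R/\m^n),\syz^d k)$; also, via the adjunction $\theta^d$, this is equivalent to $\underline{\pi_n\circ\psi^d_{R/\m^n}}=\underline{0}$. Up to this point the argument is sound.

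The gap is in what you call the heart of the argument. You would derive, from a nonzero class in $\Ext^d_R(R/\m^n,R)$ produced by $\H^d_\m(R)\neq 0$, the conclusion that a representative of $\psi^d_{R/\m^n}$ has image inside $\m/\m^n$. No actual mechanism for this deduction is given (``should translate into'' is not an argument), and I do not see how to supply one: the nonvanishing of $\Ext^d(R/\m^n,R)$ for a particular $n$ gives you a map $\syz^d(R/\m^n)\to R$ that fails to lift, but it gives no handle on the image of the counit $\psi^d_{R/\m^n}$, and in particular it does not single out the right $n$. You have also picked the wrong half of Grothendieck's theorem. The statement you need is a \emph{vanishing} statement one degree higher, not a nonvanishing statement in degree $d$: the paper's proof embeds $\lhom_R(\syz^d(R/\m^n),\syz^d k)$ into $\Ext^1_R(\syz^d(R/\m^n),\syz^{d+1}k)\cong\Ext^{d+1}_R(R/\m^n,\syz^{d+1}k)$ using Lemma~\ref{ABex}, compares these groups along the direct system of surjections $R/\m^n\twoheadrightarrow R/\m^{n-1}$, and observes that $\varinjlim_n\Ext^{d+1}_R(R/\m^n,\syz^{d+1}k)\cong\H^{d+1}_\m(\syz^{d+1}k)=0$ because $d+1>\dim R$. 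It follows that $\underline{1_{\syz^d k}}$ dies in the colimit, i.e.\ $\underline{\syz^d\pi_n}=\underline{0}$ for $n\gg 0$. So the direct system over $n$, which your outline discards after selecting a single $n$, is essential. To salvage your approach you would have to replace the appeal to $\H^d_\m(R)\neq 0$ with this limiting/vanishing argument, or otherwise produce an explicit reason why $\psi^d_{R/\m^n}$ cannot hit a minimal generator of $R/\m^n$ for large $n$; as written, that step is not there.
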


\begin{lem}\cite[Lemma 1.35]{AB}\label{ABex}
Let $\Lambda$ be a noetherian ring.
Let $0\to X\to Y\to Z\to 0$ be a short exact sequence in $\mod\Lambda$. Then there exists a long exact sequence
$$
\lhom_\Lambda(-,X)\to\lhom_\Lambda(-,Y)\to\lhom_\Lambda(-,Z)\to\Ext^1(-,X)\to\Ext^1_\Lambda(-,Y)\to\Ext^1_\Lambda(-,Z)\to\cdots
$$
of functors.
Hence, for $M\in\mod\Lambda$, there is an injection $\lhom_\Lambda(-,M)\hookrightarrow\Ext^1_\Lambda(-,\syz M)$ of functors.
\end{lem}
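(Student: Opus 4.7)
Fix $W\in\mod\Lambda$. I will prove exactness of the claimed sequence of abelian groups for each such $W$; functoriality in $W$ is immediate from the construction. The strategy is to compare the ordinary $\Ext$ long exact sequence with an analogous sequence for the functors $\mathscr{P}(W,-)$, then pass to quotients.

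The key auxiliary step is to show that the sequence
\[
0 \to \mathscr{P}(W, X) \to \mathscr{P}(W, Y) \to \mathscr{P}(W, Z) \to 0
\]
is short exact. Injectivity on the left and exactness in the middle are inherited from the ordinary $\Hom$ sequence: if a map $W \xrightarrow{h} P \xrightarrow{g} Y$ (with $P$ projective) happens to land inside $X \hookrightarrow Y$, then $g$ itself factors through the submodule $X$, and so the composite lies in $\mathscr{P}(W, X)$. For surjectivity on the right, given $f \in \mathscr{P}(W,Z)$ expressed as $W \xrightarrow{h} P \xrightarrow{g} Z$, projectivity of $P$ together with surjectivity of $Y \twoheadrightarrow Z$ produces a lift $\widetilde{g}: P \to Y$ of $g$; then $\widetilde{g} \circ h$ is a preimage of $f$ in $\mathscr{P}(W,Y)$.

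Next, I would form the commutative diagram whose top row is the short exact sequence above, whose middle row is the left-exact sequence $0 \to \Hom(W,X) \to \Hom(W,Y) \to \Hom(W,Z)$, and whose columns are the defining inclusions $\mathscr{P}(W,-) \hookrightarrow \Hom(W,-)$ with cokernel $\lhom(W,-)$. A snake-lemma argument then gives exactness of
\[
0 \to \lhom(W,X) \to \lhom(W,Y) \to \lhom(W,Z),
\]
and simultaneously shows that the usual connecting homomorphism $\Hom(W,Z) \to \Ext^1(W,X)$ vanishes on $\mathscr{P}(W,Z)$ (because any such element already lifts to $\mathscr{P}(W,Y)$ by the previous step). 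Hence it descends to a well-defined map $\lhom(W,Z) \to \Ext^1(W,X)$. Exactness at $\lhom(W,Z)$ and at $\Ext^1(W,X)$ is then a short diagram chase: an element of $\lhom(W,Z)$ killed in $\Ext^1(W,X)$ lifts to $\Hom(W,Y)$ by the ordinary long exact sequence of $\Ext$ and therefore lifts further to $\lhom(W,Y)$. Exactness from $\Ext^1(W,X)$ onward is inherited verbatim from the standard $\Ext$ long exact sequence.

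For the final assertion, I would apply the long exact sequence just constructed to a short exact sequence $0 \to \syz M \to P \to M \to 0$ with $P$ projective. Every homomorphism into $P$ factors through $P$ itself (which is projective), so $\lhom_\Lambda(-,P) = 0$, and the long exact sequence collapses to
\[
0 = \lhom_\Lambda(-,P) \to \lhom_\Lambda(-,M) \to \Ext^1_\Lambda(-,\syz M),
\]
proving the claimed injection. The main obstacle is the short exact sequence of $\mathscr{P}$-subgroups, and in particular the surjectivity $\mathscr{P}(W,Y) \twoheadrightarrow \mathscr{P}(W,Z)$; once that is established, everything else is routine homological bookkeeping.
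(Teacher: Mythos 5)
The paper offers no proof of this lemma---it simply cites \cite[Lemma 1.35]{AB}---so the only question is whether your argument stands on its own, and it does not: the ``key auxiliary step'' is false. The sequence $0 \to \mathscr{P}(W,X) \to \mathscr{P}(W,Y) \to \mathscr{P}(W,Z) \to 0$ is in general \emph{not} exact in the middle. Your justification asserts that if a composite $W \xrightarrow{h} P \xrightarrow{g} Y$ lands inside $X$, then $g$ itself lands inside $X$; this is not true (the image of $g$ can be strictly larger than that of $g\circ h$), and, more to the point, the induced map $W \to X$ need not factor through any projective. Concretely, apply your claim to $0 \to \syz M \to P \to M \to 0$ with $W = \syz M$ and $g$ the inclusion $\syz M \hookrightarrow P$: then $g \in \mathscr{P}(W,P) = \Hom_\Lambda(W,P)$ lies in the kernel of $\mathscr{P}(W,P) \to \mathscr{P}(W,M)$, but its preimage $1_{\syz M}$ lies in $\mathscr{P}(\syz M,\syz M)$ only when $\syz M$ is projective. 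The same example refutes the conclusion you draw from the snake lemma, namely exactness of $0 \to \lhom(W,X) \to \lhom(W,Y) \to \lhom(W,Z)$ with a leading zero: it would force $\lhom(\syz M, \syz M) \hookrightarrow \lhom(\syz M, P) = 0$. Note that the lemma deliberately does not assert injectivity of $\lhom_\Lambda(-,X) \to \lhom_\Lambda(-,Y)$; its very application in the paper depends on this map having a large kernel.

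The damage is localized but real: exactness at $\lhom(W,Y)$, which is part of the claimed long exact sequence, is not established by your argument. It is true, but needs a direct chase rather than the snake lemma: if $g : W \to Y$ satisfies $pg = \beta\alpha$ with $\alpha : W \to Q$, $\beta : Q \to Z$ and $Q$ projective, lift $\beta$ to $\tilde\beta : Q \to Y$; then $p(g - \tilde\beta\alpha) = 0$, so $g - \tilde\beta\alpha = iu$ for some $u : W \to X$, and $\underline{g} = \underline{iu}$ lies in the image of $\lhom(W,X)$. The remaining pieces of your proof are fine: the surjectivity $\mathscr{P}(W,Y) \twoheadrightarrow \mathscr{P}(W,Z)$ is correct and is exactly what makes the connecting map descend to $\lhom(W,Z) \to \Ext^1_\Lambda(W,X)$; exactness at $\lhom(W,Z)$, at $\Ext^1_\Lambda(W,X)$ and beyond follows from the ordinary long exact sequence as you say; and the final injection $\lhom_\Lambda(-,M) \hookrightarrow \Ext^1_\Lambda(-,\syz M)$ uses only these correct pieces, so that part of your argument survives.
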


\begin{proof}[Proof of Proposition \ref{index}]
Consider the sequence of natural surjections
$$
\cdots \twoheadrightarrow R/{\m^n} \overset{\pi_{n,n-1}}{\twoheadrightarrow} R/{\m^{n-1}} \twoheadrightarrow \cdots \twoheadrightarrow R/{\m^2} \overset{\pi_{2,1}}{\twoheadrightarrow} R/{\m}.
$$
It follows from Lemma~\ref{ABex} that we obtain the following commutative diagram.
$$
\xymatrix@R-1pc@C-1pc{
\lhom(\syz^dk,\syz^dk)\ar@{^{(}->}[d]\ar[r]&\cdots\ar[r]&\lhom(\syz^d (R/{\m^{n-1}}),\syz^dk)\ar@{^{(}->}[d]\ar[r]&\lhom(\syz^d (R/{\m^{n}}),\syz^dk)\ar@{^{(}->}[d]\ar[r]&\cdots\\
\Ext^1(\syz^dk,\syz^{d+1}k)\ar@{=}[d]\ar[r]&\cdots\ar[r]&\Ext^1(\syz^d (R/{\m^{n-1}}),\syz^{d+1}k)\ar@{=}[d]\ar[r]&\Ext^1(\syz^d (R/{\m^{n}}),\syz^{d+1}k)\ar@{=}[d]\ar[r]&\cdots\\
\Ext^{d+1}(k,\syz^{d+1}k)\ar[r]&\cdots\ar[r]&\Ext^{d+1}(R/{\m^{n-1}},\syz^{d+1}k)\ar[r]&\Ext^{d+1}(R/{\m^n},\syz^{d+1}k)\ar[r]&\cdots
}
$$
Taking the direct limit, we obtain from Grothendieck's vanishing theorem \cite[Theorem~3.5.7]{BH} that  
$$
\varinjlim_{n > 0} \lhom(\syz^d(R / \m^n), \syz^d k) \hookrightarrow \varinjlim_{n > 0} \Ext^{d+1}(R / \m^n, \syz^{d+1} k) \cong \H^{d+1}_{\m}(\syz^{d+1} k) = 0.
$$
In particular, under the natural homomorphism $\lhom(\syz^d k, \syz^d k) \to \varinjlim_{n > 0} \lhom(\syz^d(R / \m^n), \syz^d k)$, the identity map $\underline{1_{\syz^d k}}$ is mapped to zero.
Then there exists a sufficiently large integer $n > 0$ such that for the composite $\pi_n = \pi_{n,n-1} \circ \cdots \circ \pi_{2,1} : R /{\m^n} \twoheadrightarrow k$, the morphism $\underline{\syz^d \pi_n} : \syz^d(R / 
{\m^n}) \to \syz^d k$ is zero in $\lmod R$.  
Then $\pi_n \in \V_d(R /{\m^n})$, which means that $\xi(d, R /{\m^n}) = 1$.
\end{proof}

We compare the $n$-th approximated $\xi$-invariant over $R$ with that over $R / xR$, where $x \in R$ is an $R$-regular element.
The following inequality holds true.

\begin{prop}\label{reg}
Let $M$ be a finitely generated $R$-module and $n > 0$ an integer.
Let $x$ be an element of $R$ which is regular on both $R$ and $M$.
Then the following inequalities hold:
$$
\xi_{R/{xR}}(n-1, M/{xM}) \le \xi_R(n, M) \le \xi_{R/{xR}}(n, M/{xM}).
$$
\end{prop}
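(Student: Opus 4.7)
Set $\bar R=R/xR$ and $\bar M=M/xM$. Since $xk=0$, the canonical isomorphism $\Hom_R(M,k)=\Hom_{\bar R}(\bar M,k)$ places $\V_n(M)$ (computed over $R$) and $\V_{n-1}(\bar M),\V_n(\bar M)$ (computed over $\bar R$) inside one common $k$-vector space, and the proposition amounts to the chain of inclusions
\[
\V_{n-1}(\bar M)\subseteq\V_n(M)\subseteq\V_n(\bar M).
\]
The argument rests on two comparisons of syzygies. Because $x$ is $M$-regular, $-\otimes_R\bar R$ carries a minimal $R$-free resolution of $M$ to a minimal $\bar R$-free resolution of $\bar M$, so $\syz_{\bar R}^n\bar M=\syz_R^n M\otimes_R\bar R$. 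Computing $\Tor_*^R(\bar R,k)$ from the Koszul complex on $x$ gives $\Tor_0^R(\bar R,k)=\Tor_1^R(\bar R,k)=k$ and $\Tor_i^R(\bar R,k)=0$ for $i\ge 2$; running this through a minimal $R$-free resolution of $k$ and iterating via the Horseshoe lemma yields, stably in $\lmod\bar R$ and for every $n\ge 1$, a short exact sequence
\[
0\to\syz_{\bar R}^{n-1}k\to\syz_R^n k\otimes_R\bar R\to\syz_{\bar R}^n k\to 0,
\]
which for $n=1$ reads $0\to k\to\syz_R k\otimes\bar R\to\syz_{\bar R}k\to 0$, and which in particular provides a natural surjection $\syz_R^n k\otimes_R\bar R\twoheadrightarrow\syz_{\bar R}^n k$.

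For the upper bound, given $f\in\V_n(M)$, a factorization $\syz_R^n f=\beta\alpha$ through a free $R$-module $F$ reduces modulo $x$ to a factorization of the induced map $\syz_{\bar R}^n\bar M\to\syz_R^n k\otimes_R\bar R$ through the free $\bar R$-module $F/xF$. Composing with the surjection onto $\syz_{\bar R}^n k$ recovers $\syz_{\bar R}^n\bar f$, so $f\in\V_n(\bar M)$.

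For the lower bound, take $g\in\V_{n-1}(\bar M)$ with $f\in\Hom_R(M,k)$ the corresponding map, and write $\syz_{\bar R}^{n-1}g=\bar\beta\bar\alpha$ through a free $\bar R$-module $\bar F=F/xF$. Lift $\bar\alpha,\bar\beta$ to $R$-maps $\alpha,\beta$; since $\syz_R^{n-1}k$ is $x$-torsion-free (being a submodule of a free $R$-module), the difference $\syz_R^{n-1}f-\beta\alpha$ equals $x\gamma$ for a unique $\gamma$. One more application of $\syz_R$ kills the $\beta\alpha$ contribution (any map through a free $R$-module has trivial syzygy) and leaves $\syz_R^n f=x\cdot\syz_R\gamma$ stably; the remaining task is to show that this $x$-multiplication term is itself stably zero in $\lmod R$, which will be achieved by identifying it with the subobject $\syz_{\bar R}^{n-1}k$ in the extension displayed above and invoking the stable vanishing of $\syz_{\bar R}^{n-1}g$. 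The main obstacle is exactly this last step: converting a stable vanishing over $\bar R$ at syzygy level $n-1$ into a stable vanishing over $R$ at level $n$ forces a jump of one syzygy degree, and tracking the Koszul twist that absorbs the $x$-correction against the sequence $0\to\syz_{\bar R}^{n-1}k\to\syz_R^n k\otimes\bar R\to\syz_{\bar R}^n k\to 0$ is the technical heart of the argument.
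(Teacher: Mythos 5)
Your treatment of the upper inequality $\xi_R(n,M)\le\xi_{R/xR}(n,M/xM)$ is essentially correct and follows the same route as the paper: identify $\syz^n_R M\otimes_R R/xR\cong\syz^n_{R/xR}(M/xM)$ (valid because $x$ is $M$-regular), use the natural map $\syz^n_R k\otimes_R R/xR\to\syz^n_{R/xR}k$, and observe that a stable factorization through a free $R$-module reduces to a stable factorization over $R/xR$. This is precisely the content of the paper's Lemma~\ref{reglem}.

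The lower inequality, however, has a genuine gap, and you acknowledge it yourself in the last sentence. Two concrete problems. First, the intermediate claim that $\syz^{n-1}_R f-\beta\alpha$ has image in $x\,\syz^{n-1}_R k$ is not justified: the lift $\beta$ is made along the composite surjection $\syz^{n-1}_R k\twoheadrightarrow\syz^{n-1}_R k\otimes_R R/xR\twoheadrightarrow\syz^{n-1}_{R/xR}k$, and by your own short exact sequence $0\to\syz^{n-2}_{R/xR}k\to\syz^{n-1}_R k\otimes_R R/xR\to\syz^{n-1}_{R/xR}k\to 0$, the kernel of that composite is strictly larger than $x\,\syz^{n-1}_R k$ whenever $n\ge 2$. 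The $x$-torsion-freeness of $\syz^{n-1}_R k$ gives at most \emph{uniqueness} of a would-be $\gamma$, not its \emph{existence}. Second, even granting the decomposition, you never show that the residual term becomes stably zero after one further syzygy; you only point to where this should come from and call it the ``technical heart.'' As written, the lower bound is not proved.

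The paper closes exactly this gap with a sharper lemma (its Lemma~\ref{syzsyz}): for $x\in\m$ an $R$-regular element and every $m\ge 0$, there is a natural isomorphism of functors $\syz^{m+1}_R(-)\cong\syz_R\circ\syz^m_{R/xR}(-)$ from $\lmod(R/xR)$ to $\lmod R$. Taking $m=n-1$ and applying it to the $R/xR$-modules $M/xM$ and $k$, the hypothesis $\underline{\syz^{n-1}_{R/xR}(f/xf)}=\underline 0$ in $\lmod(R/xR)$ is carried by the additive functor $\syz_R$ to $\underline{\syz^n_R(f/xf)}=\underline 0$ in $\lmod R$, and then $\underline{\syz^n_R f}=\underline{\syz^n_R(f/xf)}\circ\underline{\syz^n_R\pi}=\underline 0$ where $\pi:M\twoheadrightarrow M/xM$ is the natural surjection. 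That lemma is exactly the ``Koszul twist'' your outline gestures at, but you neither state nor prove it; without it the inequality $\xi_{R/xR}(n-1,M/xM)\le\xi_R(n,M)$ does not follow from your argument.
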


We shall need the following well-known lemmas to compare syzygies over $R$ and those over $R / xR$.

\begin{lem}\label{reglem}
Let $n \ge 0$ be an integer, and let $x \in \m$.  
Then there is a natural transformation  
$$(\syz^n_R(-)) \otimes_R R/{xR} \to \syz^n_{R/xR}((-)\otimes_R R/{xR})$$ 
of functors from $\lmod R$ to $\lmod(R/{xR})$.  
Moreover, if $M$ is a finitely generated $R$-module and $x$ is regular on both $R$ and $M$, then the morphism  
$\syz^n_RM/{x\syz^n_RM} \to \syz^n_{R/xR}(M/{xM})$ 
is an isomorphism in $\lmod(R/{xR})$.
\end{lem}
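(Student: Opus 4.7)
The plan is to build the natural transformation by comparing projective resolutions over $R$ and $R/xR$, and then to identify it with a homotopy equivalence when $x$ is regular on $M$. For a finitely generated $R$-module $M$, I would fix a projective resolution $P_\bullet\to M$ over $R$ and a projective resolution $Q_\bullet\to M/xM$ over $R/xR$. Since $P_i\otimes_R R/xR$ is a direct summand of a free $R/xR$-module, it is $R/xR$-projective, so the augmented complex $(P_\bullet\otimes_R R/xR)\to M/xM$ admits a chain map $\phi_\bullet:P_\bullet\otimes_R R/xR\to Q_\bullet$ over the identity on $M/xM$, unique up to chain homotopy. Writing $\syz^n_R M=\image(\partial_n^P)\subset P_{n-1}$ and $\syz^n_{R/xR}(M/xM)=\image(\partial_n^Q)\subset Q_{n-1}$, the identity $\phi_{n-1}\circ(\partial_n^P\otimes R/xR)=\partial_n^Q\circ\phi_n$ forces $\phi_{n-1}$ to carry $\syz^n_R M\otimes_R R/xR=\syz^n_RM/x\syz^n_RM$ into $\syz^n_{R/xR}(M/xM)$. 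This is the desired map in $\mod(R/xR)$.

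For the isomorphism statement, I would invoke the Koszul computation: the short exact sequence $0\to R\xrightarrow{x} R\to R/xR\to 0$ together with the $M$-regularity of $x$ gives $\Tor^R_i(M,R/xR)=0$ for all $i>0$. Hence $P_\bullet\otimes_R R/xR$ is itself a projective resolution of $M/xM$ over $R/xR$, so $\phi_\bullet$ is a homotopy equivalence of projective resolutions. Restricting $\phi_{n-1}$ to the $n$-th syzygies, together with a homotopy inverse, then gives mutually inverse morphisms in $\lmod(R/xR)$, establishing the isomorphism $\syz^n_R M/x\syz^n_R M\xrightarrow{\sim}\syz^n_{R/xR}(M/xM)$.

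The main obstacle is checking well-definedness and naturality on the stable categories $\lmod R\to\lmod(R/xR)$. One must verify that replacing either projective resolution or the lift $\phi_\bullet$ alters the induced map on syzygies only by a morphism factoring through an $R/xR$-projective: different lifts differ by a chain homotopy, whose restriction to the syzygy factors through $Q_n$, and different choices of $P_\bullet$ (respectively $Q_\bullet$) change $\syz^n_R M$ (respectively $\syz^n_{R/xR}(M/xM)$) only by a projective summand, intertwining with $\phi_\bullet$ in a coherent way. Once this bookkeeping is set up, functoriality in $M\in\lmod R$ follows from the usual comparison lemma for projective resolutions, and the rest of the argument is routine homological algebra.
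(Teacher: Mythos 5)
Your proof is correct, but it takes a genuinely different route from the paper's. The paper constructs the natural transformation explicitly and inductively from the snake lemma: starting from the short exact sequence $0\to\syz_R M\to F\to M\to 0$ and multiplication by $x$, it produces the base case $\syz_R(-)/x\syz_R(-)\to\syz_{R/xR}((-)/x(-))$, and then builds the higher transformations by composing $\syz^n_R(-)/x\syz^n_R(-)\to\syz_{R/xR}(\syz^{n-1}_R(-)/x\syz^{n-1}_R(-))\to\syz_{R/xR}\syz^{n-1}_{R/xR}((-)/x(-))$. For the isomorphism claim it applies the short exact sequence of complexes $0\to P^\bullet\xrightarrow{x}P^\bullet\to P^\bullet/xP^\bullet\to0$ directly. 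You instead choose projective resolutions $P_\bullet$ over $R$ and $Q_\bullet$ over $R/xR$, lift the identity to a chain map $\phi_\bullet:P_\bullet\otimes_R R/xR\to Q_\bullet$, and restrict to syzygies; the isomorphism then comes from the $\Tor^R_i(M,R/xR)=0$ vanishing, which makes $P_\bullet\otimes_R R/xR$ into a projective resolution of $M/xM$ so that $\phi_\bullet$ is a homotopy equivalence. Both arguments are sound. The paper's inductive construction has the advantage that well-definedness and naturality in $\lmod$ are essentially automatic at each step, since everything is built from universal snake-lemma diagrams; your route is conceptually cleaner and makes the isomorphism in the regular case a two-line $\Tor$ computation, but it pushes the burden onto the bookkeeping you flag at the end (independence of the choice of lift up to chain homotopy, and independence of the choice of resolutions up to summing projectives), which needs to be carried out carefully before the construction descends to a natural transformation of functors on stable categories. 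If you complete that verification your argument would be a perfectly acceptable alternative proof.
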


\begin{proof}
Let $M$ be a finitely generated $R$-module.
From the commutative diagram with exact rows
$$
\xymatrix@R-1pc@C-1pc{
0\ar[r]&\syz_RM\ar[r]\ar[d]^-{x}&F\ar[r]\ar[d]^-{x}&M\ar[r]\ar[d]^-{x}&0\\
0\ar[r]&\syz_RM\ar[r] &F\ar[r] &M\ar[r]&0, 
}
$$
where $F$ is a free cover of $M$, we obtain an exact sequence $\syz_R/{x\syz_RM}\to F/{xF}\to M/{xM}\to0$, and this induces a natural transformation $\syz_R(-)/{x\syz_R(-)}\to\syz_{R/xR}((-)/{x(-)})$.
For an integer $n>1$, a natural transformation $\syz_R^n(-)/{x\syz^n_R(-)}\to \syz^n_{R/xR}((-)/{x(-)})$ is defined, inductively.
Indeed, assume that a natural transformation $\syz_R^{n-1}(-)/{x\syz^{n-1}_R(-)}\to \syz^{n-1}_{R/xR}((-)/{x(-)})$ is defined.
Then, by the composition
\begin{align*}
&\syz^n_R(-)/{x\syz^n_R(-)} 
= \syz_R(\syz^{n-1}_R(-))/{x\syz_R(\syz^{n-1}_R(-))} \\
&\to \syz_{R/xR}(\syz^{n-1}_R(-)/{x\syz^{n-1}_R(-)}) 
\to \syz_{R/xR} \syz^{n-1}_{R/xR}((-)/{x(-)}) 
= \syz^n_{R/xR}((-)/{x(-)}),
\end{align*}
the desired natural transformation is defined.
Suppose that $x$ is $R$-regular and $M$-regular and take a finite projective resolution $P^\bullet$ of $M$.
Then, by considering the short exact sequence $0\to P^\bullet\xrightarrow{x}P^\bullet\to P^\bullet/{x P^\bullet}\to 0$ of complexes, we have short exact sequences $0\to\syz^i_RM\xrightarrow{x}\syz^i_RM\to\syz^i_{R/xR}(M/{xM})\to0$ for all $i\ge0$.
The desired isomorphism is obtained.
\end{proof}

\begin{lem}\label{syzsyz}
Let $n \ge 1$ be an integer, and let $x \in \m$ be an $R$-regular element.  
Then the syzygy functor $\syz_R^n : \lmod R \to \lmod R$ induces a functor $\syz_R^n : \lmod(R/xR) \to \lmod R$.  
Moreover, for any integer $m \ge 0$, there exists a natural isomorphism of functors 
$\syz_R^{m+1}(-) \xrightarrow{\cong} \syz_R \circ \syz^m_{R/xR}(-)$ from $\lmod(R/xR)$ to $\lmod R$.
\end{lem}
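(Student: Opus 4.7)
My plan is to first confirm that $\syz_R^n$ descends to a well-defined functor $\lmod(R/xR) \to \lmod R$ for $n \ge 1$, and then build the natural isomorphism by induction on $m$, with the crucial content concentrated in the case $m = 1$.

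For the descent claim, I would verify the two conditions defining a functor on the stable category: sending $(R/xR)$-projectives to $R$-projectives, and killing morphisms that factor through $(R/xR)$-projectives. The key computation is that the short exact sequence $0 \to R \xrightarrow{x} R \to R/xR \to 0$, which exists because $x$ is $R$-regular, shows $\syz_R(R/xR) \cong xR \cong R$, which is $R$-free. By additivity any finitely generated free $R/xR$-module $P$ has $\syz_R P$ again $R$-free, hence $\syz_R^n P = 0$ in $\lmod R$ for $n \ge 1$. Likewise, a morphism factoring through such a $P$ becomes after applying $\syz_R$ a morphism factoring through the $R$-free module $\syz_R P$, which vanishes in $\lmod R$.

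The core case $m = 1$ proceeds as follows. Fix $M \in \mod(R/xR)$ and a surjection $F \to M$ from a finitely generated free $R$-module with kernel $\syz_R M$. Since $xM = 0$, a direct inspection yields $xF \subseteq \syz_R M$, producing a short exact sequence of $R$-modules
$$
0 \to xF \to \syz_R M \to \syz_{R/xR} M \to 0,
$$
in which $xF \cong F$ is $R$-free by the regularity of $x$. Pulling this back along a projective cover $G \to \syz_{R/xR} M$ gives an $R$-free module $\widetilde{G} \cong xF \oplus G$ fitting into $0 \to \syz_R \syz_{R/xR} M \to \widetilde{G} \to \syz_R M \to 0$, so $\syz_R \syz_{R/xR} M \cong \syz_R^2 M$ in $\lmod R$. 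For the induction step, I apply the $m=1$ case to the $R/xR$-module $\syz_{R/xR}^{m-1} M$ and combine it with the inductive hypothesis (post-composed with $\syz_R$) to obtain $\syz_R^{m+1} M \cong \syz_R^2 \syz_{R/xR}^{m-1} M \cong \syz_R \syz_{R/xR}^m M$.

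The main obstacle is naturality in $M$. To establish this, for a morphism $f : M \to N$ in $\lmod(R/xR)$ I would lift $f$ to compatible free $R$-covers $F_M \to F_N$, check that the induced map $xF_M \to xF_N$ is well defined, and verify that the resulting morphism of the three-term exact sequences produces a commuting square relating the two isomorphisms $\syz_R^{m+1} \xrightarrow{\cong} \syz_R \syz_{R/xR}^m$ at $M$ and at $N$, modulo morphisms factoring through projectives. This is a standard but careful diagram-chase in the stable module category, ultimately relying on the well-definedness of the $R$-syzygy functor up to free summands.
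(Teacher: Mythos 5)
Your proof is correct and follows essentially the same route as the paper: both verify the descent by noting $\syz_R$ of a projective $R/xR$-module is $R$-free (via $\syz_R(R/xR)\cong xR\cong R$), both obtain the base case $m=1$ from the short exact sequence $0\to xF\to\syz_R M\to\syz_{R/xR}M\to 0$ with $xF$ free (the paper applies $\syz_R$ to this column directly, you make the underlying pullback explicit, which is the same computation), and both conclude by the identical induction $\syz_R^{m+1}\cong\syz_R^2\syz_{R/xR}^{m-1}\cong\syz_R\syz_{R/xR}^m$. Your treatment of naturality is about as explicit as the paper's, which likewise constructs the isomorphism objectwise and asserts naturality from the construction.
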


\begin{proof}
Applying the $n$-th syzygy functor $\syz_R^n$ to any finitely generated $R/xR$-module yields a projective $R$-module, so $\syz^n_R : \lmod(R/xR) \to \lmod R$ is indeed induced.  

Let $N$ be a finitely generated projective $R/xR$-module.  
Take a finite free cover $R^{\oplus n} \twoheadrightarrow N$ over $R$.  
Then $(R/xR)^{\oplus n} \twoheadrightarrow N$ is also a finite free cover over $R/xR$.  
Hence, we obtain the following commutative diagram with exact rows and columns:
$$
\xymatrix@R-1pc@C-1pc{
&0\ar[d] & 0\ar[d] &&\\
&R^{\oplus n} \ar@{=}[r] \ar[d] & R^{\oplus n} \ar[d] &&\\
0\ar[r] & \syz_R N \ar[r] \ar[d] & R^{\oplus n} \ar[r] \ar[d] & N \ar[r] \ar@{=}[d] & 0\\
0\ar[r] & \syz_{R/xR} N \ar[r] \ar[d] & (R/xR)^{\oplus n} \ar[r] \ar[d] & N \ar[r] & 0\\
&0 & 0 &&
}
$$
Applying $\syz_R$ to the first column, we obtain an isomorphism  
$\syz^2_R N \xrightarrow{\cong} \syz_R \syz_{R/xR}(N)$ in $\lmod R$,  
which induces a natural isomorphism of functors  
$\syz^2_R(-) \xrightarrow{\cong} \syz_R \syz_{R/xR}(-)$ from $\lmod(R/xR)$ to $\lmod R$.
For $m > 1$, we obtain the desired natural isomorphism  
$\syz_R^{m+1}(-) \xrightarrow{\cong} \syz_R \syz^m_{R/xR}(-)$ by induction.  
Indeed, suppose that we have a natural isomorphism  
$\syz_R^m(-) \xrightarrow{\cong} \syz_R \syz^{m-1}_{R/xR}(-)$.  
Then the composition of the following natural transformations of functors from $\lmod(R/xR)$ to $\lmod R$
$$
\syz^{m+1}_R(-)=\syz_R \syz^m_R(-)\xrightarrow{\cong}\syz_R \syz_R \syz^{m-1}_{R/xR}(-)=\syz^2_R \syz^{m-1}_{R/xR}(-)\xrightarrow{\cong}\syz_R \syz_R \syz^{m-1}_{R/xR}(-)=\syz_R \syz^m_{R/xR}(-)
$$
yields the desired natural isomorphism.
\end{proof}

\begin{proof}[Proof of Proposition \ref{reg}]
We first prove the inequality $\xi_R(n,M) \le \xi_{R/xR}(n, M/xM)$.  
To begin with, note that there exists an isomorphism  
$\Hom_R(M, k) \cong \Hom_{R/xR}(M/xM, k)$; $f \mapsto f/x f$.  
To prove the desired inequality, it suffices to show that if an $R$-homomorphism  
$f : M \to k$ satisfies $\underline{\syz^n_R f} = \underline{0}$ in $\lmod R$,  
then the morphism $\underline{\syz^n_{R/xR}(f/x f)}$ is also zero in $\lmod(R/xR)$.
By Lemma \ref{reglem}, the following diagram commutes in $\lmod(R/xR)$:
$$
\xymatrix@R-1pc@C-1pc{
{\syz^n_R M}/{x \syz^n_R M} \ar[dd]^-{\cong} 
  \ar[rrrrrr]^-{\underline{{\syz^n_R f}/x{\syz^n_R f}}} &&&&&&
{\syz^n_R k}/{x \syz^n_R k} \ar[dd] \\
&&&&&&\\
\syz^n_{R/xR}(M/xM) \ar[rrrrrr]^-{\underline{\syz^n_{R/xR}(f/x f)}} &&&&&&
\syz^n_{R/xR} (k)
}
$$
By assumption, the morphism $\underline{{\syz^n_R f}/x{\syz^n_R f}}$ is zero in $\lmod(R/xR)$, and so is $\underline{\syz^n_{R/xR}(f/x f)}$.  
Thus, the first inequality is proved.

Next, we prove the inequality $\xi_{R/xR}(n-1 , M/xM) \le \xi_R(n, M)$ holds true.  
That is, we need to show that if $\underline{\syz^{n-1}_{R/xR}(f/x f)} = \underline{0}$ in $\lmod(R/xR)$,  
then it follows that $\underline{\syz^n_R f} = \underline{0}$ in $\lmod R$.
By Lemma \ref{syzsyz}, the following commutative diagram commutes in $\lmod R$:
$$
\xymatrix@R-1pc@C-1pc{
\syz^n_R M \ar[rrrrrr]^-{\underline{\syz^n_R f}} \ar[dd]^-{\underline{\syz^n_R \pi}} &&&&&& \syz^n_R k \ar@{=}[dd] \\
&&&&&& \\
\syz^n_R (M/xM) \ar[rrrrrr]^-{\underline{\syz^n_R(f/x f)}} \ar[dd]^-{\cong} &&&&&& \syz^n_R k \ar[dd]^-{\cong} \\
&&&&&& \\
\syz_R \syz^{n-1}_{R/xR}(M/xM) \ar[rrrrrr]^-{\underline{\syz_R \syz^{n-1}_{R/xR}(f/x f)}} &&&&&& \syz_R \syz^{n-1}_{R/xR}(k),
}
$$
where $\pi : M \twoheadrightarrow M/xM$ is the natural surjection.
Again by Lemma \ref{syzsyz}, the assumption that $\underline{\syz^{n-1}_{R/xR}(f/x f)} = \underline{0}$ in $\lmod(R/xR)$ implies that  
$\underline{\syz_R \syz^{n-1}_{R/xR}(f/x f)}$ is the zero morphism in $\lmod R$.  
It follows from the above commutative diagram that $\underline{\syz^n_R f} = \underline{0}$ in $\lmod R$,  
and the proof is completed.
\end{proof}

We collect several important results that are recovered as special cases of our results.

\begin{cor}\label{cors}
Let $M$ be a finitely generated $R$-module.
\begin{enumerate}[\rm(1)]
   \item
   Suppose that $R$ is Gorenstein.
   Then the following hold.
   \begin{itemize}
      \item[(1-i)]{\rm\cite[Proposition 2.3]{Mart96I}}
      One has the equality $\delta_R(M)=\xi_R(M)$.
      \item[(1-ii)]{\rm\cite{AusM}}
      The equality $\delta_R(M)=0$ holds true if and only if any $R$-homomorphism $f:X\to Z$ of finitely generated $R$-modules with $\pd_R Z<\infty$ satisfies $f\otimes_R k=0$.
      \item[(1-iii)]{\rm\cite{AusM}}
      For any FPD hull $0\to M\to Y\to X\to 0$ of $M$, one has the equality $\delta_R(M)=\mu_R(Y)-\mu_R(X)$.
   \end{itemize}
   \item{\rm\cite[Theorem 8]{Mart96II}}
   Let $x$ be an element of $R$ which is regular on both $R$ and $M$.
   Then one has the equality $\xi_R(M)=\xi_{R/xR}(M/{xM})$.
   \item{\rm\cite[Theorem 12]{Mart96II}}
   Suppose that $\Ext^i_R(M,R)=0$ for all $i>0$.
   Then $\xi_R(M)$ is the rank of the largest free summand of $M$.
\end{enumerate}
\end{cor}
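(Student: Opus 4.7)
The plan is to deduce each assertion as a specialization of the main theorems of this section, using that the non-decreasing sequence $\xi_R(n,M)$ stabilizes to $\xi_R(M)$ at a finite stage in every situation considered.

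For part (1), I would set $d=\dim R$ and first argue that the minimal MCM approximation $0\to Y_M\to X_M\to M\to 0$ of $M$ is simultaneously a minimal $n$-AB approximation for every $n\ge d$: indeed, $X_M$ is totally reflexive over the Gorenstein ring $R$, so $\Ext^i_R(X_M,R)=0$ for all $i\ge 1$, while $\pd_R Y_M\le d-1\le n-1$, and the minimality in the sense of Definition~\ref{mindef}(1) is intrinsic to the sequence and does not depend on $n$. Theorem~\ref{ABrank} then yields $\xi_R(n,M)=\delta_R(M)$ for every $n\ge d$, and passing to the limit proves (1-i). Part (1-ii) will follow by applying Theorem~\ref{orginm} at $n=d$: since FPD hulls exist over a $d$-dimensional Gorenstein ring, $M\in\mathscr{H}_d(R)\subset\mathscr{E}_d(R)$, and since every $R$-module of finite projective dimension has $\pd\le d$, condition~(2) of that theorem coincides with the stated condition on $f$; the equivalence with $\xi_R(d,M)=\delta_R(M)=0$ (via (1-i)) then closes the argument. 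Part (1-iii) is immediate from Proposition~\ref{FPDxi} at $n=d$ combined with (1-i).

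For part (2), I would feed each $n>0$ into Proposition~\ref{reg} to obtain the sandwich
\[
\xi_{R/xR}(n-1,\, M/xM)\ \le\ \xi_R(n,M)\ \le\ \xi_{R/xR}(n,\, M/xM),
\]
and let $n\to\infty$: both outer terms converge to $\xi_{R/xR}(M/xM)$ while the middle converges to $\xi_R(M)$, forcing the desired equality. For part (3), I would invoke Corollary~\ref{symm}(2) at each $n\ge 0$: the hypothesis $\Ext^i_R(M,R)=0$ for all $i>0$ makes $\xi_R(n,M)$ equal to the rank of the largest free summand of $M$ for every $n$, and the limit $\xi_R(M)$ inherits this common value.

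The only delicate point is the opening observation in part (1), namely that the minimal MCM approximation remains minimal as an $n$-AB approximation for every $n\ge d$; this rests on the equivalence between maximal Cohen--Macaulay and total reflexivity over a Gorenstein local ring, together with the fact that minimality is a property of the underlying short exact sequence. Everything else is a direct specialization of the earlier theorems of this section.
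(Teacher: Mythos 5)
Your proposal is correct and follows essentially the same route as the paper: specialize Theorems~\ref{ABrank}, \ref{orginm}, and Proposition~\ref{FPDxi} at $n=d=\dim R$ for part (1), pass to the limit in Proposition~\ref{reg} for part (2), and apply Corollary~\ref{symm}(2) for part (3). The only point where you add substance beyond the paper's terse citation is the explicit justification that a minimal MCM approximation over a $d$-dimensional Gorenstein ring is a minimal $n$-AB approximation for every $n\ge d$ (via total reflexivity of $X_M$ and $\pd_R Y_M\le d-1$), which is exactly the fact the paper uses implicitly.
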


\begin{proof}
The statements (1-i) to (1-iii) follow from Theorem \ref{ABrank}, Theorem \ref{orginm}, and Proposition \ref{FPDxi}, respectively.  
The statement (2) follows from Proposition \ref{reg}, since we have
\[
\xi_{R/xR}(M/xM)
= \lim_{n \to \infty} \xi_{R/xR}(n-1, M/xM)
\le
\lim_{n \to \infty} \xi_R(n, M)
\le \lim_{n \to \infty} \xi_{R/xR}(n, M/xM)
= \xi_{R/xR}(M/xM)
\]
and the middle limit is none other than $\xi_R(M)$.
The statement (3) follows from Corollary \ref{symm}, which implies that $\xi_R(n, M) = \xi_R(0, M)$ for all integers $n \ge 0$.  
Indeed, from this equality we obtain that
$
\xi_R(M) = \lim_{n \to \infty} \xi_R(n, M) = \lim_{n \to \infty} \xi_R(0, M) = \xi_R(0, M).
$
\end{proof}

\begin{ac}
The author is grateful to his supervisor Ryo Takahashi for his many valuable suggestions.
\end{ac}


\begin{thebibliography}{99}
\bibitem{AusM}
{\sc M. Auslander}, Minimal Cohen-Macaulay approximations, unpublished paper.
\bibitem{AusC}
{\sc M. Auslander}, Coherent functors, {\em Proc. Conf. Categorical Algebra (La Jolla, Calif., 1965)}, 189--231, {\em Springer, New York}, 1966.
\bibitem{AB}
{\sc M. Auslander; M. Bridger}, Stable module theory, Memoirs of the American Mathematical Society {\bf 94}, {\em American Mathematical Society, Providence, R.I.}, 1969.
\bibitem{ABu}
{\sc M. Auslander; R.-O. Buchweitz}, The homological theory of maximal Cohen--Macaulay approximations, Colloque en l'honneur de Pierre Samuel (Orsay, 1987), {\it M\'em. Soc. Math. France (N.S.)} {\bf 38} (1989), 5--37.
\bibitem{ADS}
{\sc M. Auslander; S. Ding; \O. Solberg}, Liftings and weak liftings of modules, {\em J. Algebra} {\bf 156} (1993), no. 2, 273--317.
\bibitem{AR1}
{\sc M. Auslander; I. Reiten}, $k$-Gorenstein algebras and syzygy modules, {\em J. Pure Appl. Algebra} {\bf 92} (1994), no. 1, 1--27.
\bibitem{AR2}
{\sc M. Auslander; I. Reiten}, Syzygy modules for Noetherian rings, {\em J. Algebra} {\bf 183} (1996), no. 1, 167--185.
\bibitem{AS}
{\sc M. Auslander; S. O. Smal\o}, Preprojective modules over Artin algebras, {\em J. Algebra} {\bf 66} (1980), no. 1, 61--122.
\bibitem{ABIM}
{\sc L. L. Avramov; R.-O. Buchweitz; S. B. Iyengar; C. Miller}, Homology of perfect complexes, {\em Adv. in Math.} {\bf 223} (2010), no. 1, 1731--1781.
\bibitem{BH}
{\sc W. Bruns; J. Herzog}, Cohen--Macaulay rings, revised edition, Cambridge Studies in Advanced Mathematics {\bf 39}, {\it Cambridge University Press, Cambridge}, 1998.
\bibitem{Ding92}
{\sc S. Ding}, Cohen--Macaulay approximations and multiplicities, {\em J. Algebra} {\bf 153} (1992), 172--188.
\bibitem{Ding93}
{\sc S. Ding}, A note on the index of Cohen-Macaulay local rings, {\em J. Algebra} {\bf 21} (1993), no. 1, 53--71.
\bibitem{Ding94}
{\sc S. Ding}, The associated graded ring and the index of a Gorenstein local ring, {\em Proc. Amer. Math. Soc.} {\bf 120} (1994), no. 4, 1029--1033.
\bibitem{Ding942}
{\sc S. Ding}, Auslander's $\delta$-invariants of Gorenstein local rings, {\em Proc. Amer. Math. Soc.} {\bf 122} (1994), no. 3, 649--656.
\bibitem{Gel}
{\sc V. G\'{e}linas}, The depth, the delooping level and the finitistic dimension, {\em Adv. Math.} {\bf 394} (2022), 108052.
\bibitem{HS}
{\sc M. Hashimoto; A. Shida}, Some remarks on index and generalized Loewy length of a Gorenstein local ring, {\em J. Algebra} {\bf 187} (1997), no. 1, 150--162.
\bibitem{H}
{\sc Z. Y. Huang}, Syzygy modules for quasi $k$-Gorenstein rings, {\em J. Algebra} {\bf 299} (2006), no. 1, 21--32.
\bibitem{HI}
{\sc Z. Huang; O. Iyama}, Auslander-type conditions and cotorsion pairs, {\em J. Algebra} {\bf 318} (2007), no. 1, 93--100.
\bibitem{IyaSym}
{\sc O. Iyama}, Symmetry and duality on $n$-Gorenstein rings, {\em J. Algebra} {\bf 269} (2003), no. 2, 528--535.
\bibitem{Kfree}
{\sc K. Kato}, Vanishing of free summands in Cohen--Macaulay approximations, {\em Comm. Algebra} {\bf 23} (1995), no. 7, 2697--2717.
\bibitem{Kview}
{\sc K. Kato}, Cohen-Macaulay approximations from the viewpoint of triangulated categories, {\em Comm. Algebra} {\bf 27} (1999), no.3, 1103-–1126.
\bibitem{Kkern}
{\sc K. Kato}, Stable module theory with kernels, {\em Math. J. Okayama University} {\bf 43} (2001) 31--41.
\bibitem{Kmorp}
{\sc K. Kato}, Morphisms represented by monomorphisms, {\em J. Pure Appl. Algebra} {\bf 208} (2007), no. 1, 261--283.
\bibitem{Kob}
{\sc T. Kobayashi}, On delta invariants and indices of ideals, {\em J. Math. Soc. Japan} {\bf 71} (2019), no. 2, 589--597.
\bibitem{LW}
{\sc G. J. Leuschke; R. Wiegand}, Cohen--Macaulay Representations, Mathematical Surveys and Monographs, vol. 181, {\em American Mathematical Society, Providence, RI}, 2012.
\bibitem{Mart96I}
{\sc A. Martsinkovsky}, New homological invariants for modules over local rings, I, {\em J. Pure Appl. Algebra} {\bf 110} (1996), no. 1, 1--8.
\bibitem{Mart96rem}
{\sc A. Martsinkovsky}, A remarkable property of the co syzygy modules of the residue field of a nonregular local ring, {\em J. Pure Appl. Algebra} {\bf 110} (1996), no. 1, 9--13.
\bibitem{Mart96II}
{\sc A. Martsinkovsky}, New homological invariants for modules over local rings, II, {\em J. Pure Appl. Algebra} {\bf 153} (2000), no. 1, 65--78.
\bibitem{Ost}
{\sc Y. Otake}, Stable categories of spherical modules and torsionfree modules, {\em Proc. Amer. Math. Soc.} {\bf 151} (2023), no. 9, 3655--3662.
\bibitem{Omorp}
{\sc Y. Otake}, Morphisms represented by monomorphisms with $n$-torsionfree cokernel, {\em Algebras and Representation Theory} {\bf 26} (2023), 2891--2912.
\bibitem{OExt}
{\sc Y. Otake}, Ext modules related to syzygies of the residue field, {\em J. Algebra} {\bf 650} (2024), 10--22.
\bibitem{Puthen}
{\sc T.J. Puthenpurakal}, On the Loewy length of modules of finite projective dimension, {\em J. Commut. Algebra} {\bf 9} (2017), no. 2, 291--301.
\bibitem{Shida}
{\sc A. Shida}, On indices of local rings along local homomorphisms, {\em Comm. Algebra}, {\bf 23} (1995), no. 12, 4535--4541.
\bibitem{Sim}
{\sc A. M. Simon}, About $q$-approximations and $q$-hulls over a Noetherian ring, some refinements of the Auslander-Bridger theory, {\em Comm. Algebra} {\bf 47} (2019), no. 11, 4496--4519.
\bibitem{Yoshida}
{\sc K. Yoshida}, A note on minimal Cohen–Macaulay approximations, {\em Comm. Algebra}, {\bf 24} (1996), no. 1, 235--246.
\bibitem{Yos93}
{\sc Y. Yoshino}, Cohen-Macaulay approximations, Proc. 4th Symposium on Representation Theory of Algebras, Izu, Japan, 1993, 119--138, in Japanese. 
\bibitem{Yos96}
{\sc Y. Yoshino}, On the higher delta invariants of a Gorenstein local ring, {\em Proc. Amer. Math. Soc.} {\bf 124} (1996), no. 9, 2641--2647.
\bibitem{Yos99}
{\sc Y.~Yoshino}, Modules with null delta invariants, {\em Comm.\ Algebra} {\bf 27} (1999), no.~8, 3781--3799.
\end{thebibliography}
\end{document}